\newcommand{\Z}{\mathbb{Z}}
\newcommand{\F}{\mathbb{F}}
\newcommand{\K}{\mathbb{K}}
\renewcommand{\L}{\mathbb{L}}
\renewcommand{\O}{\mathbb{O}}
\newcommand{\Ql}{\mathbb{Q}_\ell}
\newcommand{\bk}{\Bbbk}
\newcommand{\Ga}{\mathbb{G}_{\mathrm{a}}}
\newcommand{\Gm}{\mathbb{G}_{\mathrm{m}}}
\newcommand{\Db}{D^{\mathrm{b}}}
\newcommand{\bX}{\mathbf{X}}
\newcommand{\cE}{\mathcal{E}}
\newcommand{\Perv}{\mathrm{Perv}}
\newcommand{\Tilt}{\mathrm{Tilt}}
\newcommand{\cP}{\mathcal{P}}
\newcommand{\cF}{\mathcal{F}}
\newcommand{\cG}{\mathcal{G}}
\newcommand{\cL}{\mathcal{L}}
\newcommand{\Av}{\mathsf{Av}}
\newcommand{\For}{\mathsf{For}}
\newcommand{\IC}{\mathrm{IC}}
\newcommand{\id}{\mathrm{id}}
\newcommand{\simto}{\overset{\sim}{\to}}
\DeclareMathOperator{\Hom}{Hom}
\DeclareMathOperator{\Ext}{Ext}
\newcommand{\Fl}{\mathrm{Fl}}
\newcommand{\Gr}{\mathrm{Gr}}
\newcommand{\Wf}{W_{\mathrm{f}}}
\newcommand{\IW}{\mathcal{IW}}
\def\lotimes{\@ifnextchar_{\@lotimessub}{\@lotimesnosub}}
\def\@lotimessub_#1{\mathchoice{\mathbin{\mathop{\otimes}^L}_{#1}}%
  {\otimes^L_{#1}}{\otimes^L_{#1}}{\otimes^L_{#1}}}
\def\@lotimesnosub{\mathbin{\mathop{\otimes}^L}}
\def\lboxtimes{\@ifnextchar_{\@lboxtimessub}{\@lboxtimesnosub}}
\def\@lboxtimessub_#1{\mathchoice{\mathbin{\mathop{\boxtimes}^L}_{#1}}%
  {\boxtimes^L_{#1}}{\boxtimes^L_{#1}}{\boxtimes^L_{#1}}}
\def\@lboxtimesnosub{\mathbin{\mathop{\boxtimes}^L}}
\newcommand{\scO}{\mathscr{O}}
\newcommand{\scK}{\mathscr{K}}
\newcommand{\cJ}{\mathcal{J}}
\newcommand{\pH}{{}^p \hspace{-1pt} \mathcal{H}}
\newcommand{\Rep}{\mathrm{Rep}}
\newcommand{\GO}{G_\scO}
\newcommand{\GK}{G_\scK}
\newcommand{\rInd}{{}^* \hspace{-0.5pt} \mathsf{Ind}}
\newcommand{\lInd}{{}^! \hspace{-0.5pt} \mathsf{Ind}}
\newcommand{\uu}{\mathrm{u}}
\numberwithin{equation}{section}
\numberwithin{figure}{section}
\newtheorem{thm}{Theorem}[section]
\newtheorem{lem}[thm]{Lemma}
\newtheorem{prop}[thm]{Proposition}
\newtheorem{cor}[thm]{Corollary}
\theoremstyle{definition}
\theoremstyle{remark}
\newtheorem{rmk}[thm]{Remark}
\title{An Iwahori--Whittaker model for the Satake category}
\author[R. Bezrukavnikov]{Roman Bezrukavnikov}
\address{Department of Mathematics \\ Massachusetts Institute of Technology \\ Cambridge, MA \\ 02139 \\ USA.}
\email{bezrukav@math.mit.edu}
\author[D. Gaitsgory]{Dennis Gaitsgory}
\address{Harvard University \\ 1 Oxford St \\ Cambridge, MA \\ 02138 \\ USA.}
\email{gaitsgde@math.harvard.edu}
\author[I.~Mirkovi\'c]{Ivan Mirkovi\'c}
\address{University of Massachusetts, Amherst, MA, USA.}
\email{mirkovic@math.umass.edu}
 \author[S.~Riche]{Simon Riche}
 \address{Universit\'e Clermont Auvergne, CNRS, LMBP, F-63000 Clermont-Ferrand, France.}
 \email{simon.riche@uca.fr}
 \author[L.~Rider]{Laura Rider}
 \address{Department of Mathematics, University of Georgia, Athens Georgia 30602, USA.}
 \email{laurajoy@uga.edu}
 \thanks{R.B. was partially supported by NSF grant No. DMS-1601953. D.G. was supported by NSF Grant No. DMS-1063470. This project has received funding from the European Research Council (ERC) under the European Union's Horizon 2020 research and innovation programme (S.R., grant agreement No. 677147). L.R. was supported by NSF Grant No. DMS-1802378.}
\begin{document}

\begin{abstract}
 In this paper we prove, for $G$ a connected reductive algebraic group satisfying a mild technical assumption, that the Satake category of $G$ (with coefficients in a finite field, a finite extension of $\Ql$, or the ring of integers of such a field) can be described via Iwahori--Whittaker perverse sheaves on the affine Grassmannian. As applications, we confirm a conjecture of Juteau--Mautner--Williamson describing the tilting objects in the Satake category, and give a new proof of the property that a tensor product of tilting modules is tilting.
\end{abstract}

\maketitle

\section{Introduction}

\subsection{Another incarnation of the Satake category}

Let $G$ be a connected reductive algebraic group over an algebraically closed field $\F$ of positive characteristic, and let $\bk$ be either a finite field of characteristic $\ell \neq \mathrm{char}(\F)$, or a finite extension of $\Ql$, or the ring of integers of such an extension. If $\scK:=\F( \hspace{-0.5pt} (z) \hspace{-0.5pt} )$ and $\scO:= \F[ \hspace{-0.5pt} [z] \hspace{-0.5pt} ]$, the \emph{Satake category} is the category
\[
 \Perv_{\GO}(\Gr,\bk)
\]
of $\GO$-equivariant (\'etale) $\bk$-perverse sheaves on the affine Grassmannian
\[
 \Gr:=\GK/\GO
\]
of $G$. This category is a fundamental object in Geometric Representation Theory through its appearance in the \emph{geometric Satake equivalence}, which claims that this category admits a natural convolution product $(-) \star^{\GO} (-)$, which endows it with a monoidal structure, and that there exists an equivalence of monoidal categories
\begin{equation}
\label{eqn:Satake}
 \mathcal{S} : (\Perv_{\GO}(\Gr,\bk), \star^{\GO}) \xrightarrow{\sim} (\Rep(G^\vee_\bk), \otimes).
\end{equation}
 Here the right-hand side is the category of algebraic representations of the split reductive $\bk$-group scheme which is Langlands dual to $G$ on finitely generated $\bk$-modules; see~\cite{mv} for the original proof of this equivalence in full generality, and~\cite{br} for a more detailed exposition. (In these references, what is explicitly treated is the analogous equivalence for a \emph{complex} group $G$, in which case $\bk$ can be any Noetherian commutative ring of finite global dimension. The \'etale setting is similar; see~\cite[\S 14]{mv} and~\cite[\S 1.1.4]{br} for a few comments.)

This category already has another incarnation since (as proved by Mirkovi\'c--Vilonen) the forgetful functor
\[
 \Perv_{\GO}(\Gr,\bk) \to \Perv_{(\GO)}(\Gr,\bk)
\]
from the Satake category to the category of perverse sheaves on $\Gr$ which are constructible with respect to the stratification by $\GO$-orbits is an equivalence of categories. 

The first main result of the present paper provides a third incarnation of this category, as a category $\Perv_{\IW}(\Gr,\bk)$ of Iwahori--Whittaker\footnote{This terminology is taken from~\cite{ab}. In~\cite{abbgm}, the term ``baby Whittaker'' is used for the same construction.} perverse sheaves on $\Gr$. More precisely we prove that a natural functor
\begin{equation}
\label{eqn:functor-equiv-intro}
 \Perv_{\GO}(\Gr,\bk) \to \Perv_{\IW}(\Gr,\bk)
\end{equation}
is an equivalence of categories, see Theorem~\ref{thm:equiv}. This result is useful because computations in $\Perv_{\IW}(\Gr,\bk)$ are much easier than in the categories $\Perv_{\GO}(\Gr,\bk)$ or $\Perv_{(\GO)}(\Gr,\bk)$, in particular due to the facts that standard/costandard objects have more explicit descriptions and that the ``realization functor''
\[
 \Db \Perv_{\IW}(\Gr,\bk) \to \Db_{\IW}(\Gr,\bk)
\]
is an equivalence of triangulated categories.

In the analogous setting of Whittaker $\mathcal{D}$-modules over a field of characteristic $0$, this statement already 
appears in~\cite{abbgm}.
See Remark~\ref{rmk:definition-categories}\eqref{it:Whittaker-analytic} below for a discussion of possible variants for constructible
sheaves over $\mathbb{C}$. Let us also mention the conjecture~\cite[Conjecture~59]{be} containing 
this statement as a special case (see~\cite[Example~60]{be} for more details).


\subsection{Relation with the Finkelberg--Mirkovi\'c conjecture}
\label{ss:intro-fm}

One possible justification for the equivalence~\eqref{eqn:functor-equiv-intro} comes from a singular analogue of the Finkelberg--Mirkovi\'c conjecture~\cite{fm}. This conjecture states that, if $\bk$ is a field of positive characteristic $\ell$, if $I \subset \GO$ is an Iwahori subgroup and $I_\uu \subset I$ is its pro-unipotent radical, there should exist an equivalence of abelian categories
\[
 \mathsf{F}: \Perv_{I_\uu}(\Gr,\bk) \xrightarrow{\sim} \Rep_0(G^\vee_\bk)
\]
between the category of $I_\uu$-equivariant $\bk$-perverse sheaves on $\Gr$ and the ``extended principal block'' $\Rep_0(G^\vee_\bk)$ of $\Rep(G^\vee_\bk)$, i.e.~the subcategory consisting of modules over which the Harish--Chandra center of the enveloping algebra of the Lie algebra of $G^\vee_\bk$ acts with generalized character $0$.
This equivalence is expected to be compatible with the geometric Satake equivalence in the sense that for $\cF$ in $\Perv_{I_\uu}(\Gr,\bk)$ and $\cG$ in $\Perv_{\GO}(\Gr,\bk)$ we expect a canonical isomorphism
\[
 \mathsf{F}(\cF \star^{\GO} \cG) \cong \mathsf{F}(\cF) \otimes \mathcal{S}(\cG)^{(1)}.
\]
(Here $(-) \star^{\GO} (-)$ is the natural convolution action of $\Perv_{\GO}(\Gr,\bk)$ on the category $\Perv_{I_\uu}(\Gr,\bk)$, and $(-)^{(1)}$ is the Frobenius twist.)

One might expect similar descriptions for some singular ``extended blocks'' of $\Rep(G^\vee_\bk)$, namely those attached to weights in the closure of the fundamental alcove belonging only to walls parametrized by (non-affine) simple roots, involving some Whittaker-type perverse sheaves.\footnote{This extension of the Finkelberg--Mirkovi\'c conjecture stems from discussions of the fourth author with P. Achar. ``Graded versions'' of such equivalences are established in~\cite{acr}.} In the ``most singular'' case, this conjecture postulates the existence of an equivalence
\[
 \mathsf{F}_{\mathrm{sing}} : \Perv_{\IW}(\Gr,\bk) \xrightarrow{\sim} \Rep_{-\varsigma}(G^\vee_\bk)
\]
between our category of Iwahori--Whittaker perverse sheaves and the extended block of weight $-\varsigma$, where $\varsigma$ is a weight whose pairing with any simple coroot is $1$ (the ``Steinberg block''), which should satisfy
\[
 \mathsf{F}_{\mathrm{sing}}(\cF \star^{\GO} \cG) \cong \mathsf{F}_{\mathrm{sing}}(\cF) \otimes \mathcal{S}(\cG)^{(1)}.
\]
(Here we assume that $\varsigma$ exists, which holds e.g.~if the derived subgroup of $G^{\vee}_\bk$ is simply-connected.)

On the representation-theoretic side, it is well known
that the assignment $V \mapsto \mathsf{L}((\ell-1)\varsigma) \otimes V^{(1)}$ induces an equivalence of categories
\[
 \Rep(G^\vee_\bk) \xrightarrow{\sim} \Rep_{-\varsigma}(G^\vee_\bk),
\]
where $\mathsf{L}((\ell-1)\varsigma)$ is the simple $G^\vee_\bk$-module of highest weight $(\ell-1)\varsigma$; see~\cite[\S II.10.5]{jantzen} or~\cite{andersen}.
Our equivalence~\eqref{eqn:functor-equiv-intro} can be considered a geometric counterpart of this equivalence.

\subsection{Relation with results of Lusztig}

Another hint for the equivalence~\eqref{eqn:functor-equiv-intro} is given by some results of Lusztig~\cite{lusztig}. Namely, in~\cite[\S 6]{lusztig} Lusztig defines some submodules $\mathcal{K}$ and $\mathcal{J}$ of (a localization of) the affine Hecke algebra $\mathcal{H}$ attached to $G$. By construction $\mathcal{K}$ is a (non unital) subalgebra of the localization of $\mathcal{H}$, and $\mathcal{J}$ is stable under right multiplication by $\mathcal{K}$. Then~\cite[Corollary~6.8]{lusztig} states 
that $\mathcal{J}$ is free as a right based $\mathcal{K}$-module (for some natural bases), with a canonical generator denoted $J_\rho$. Now $\mathcal{H}$ (or rather its specialization at $q=1$) is categorified by the category of Iwahori-equivariant perverse sheaves on the affine flag variety $\Fl$ of $G$. The subalgebra $\mathcal{K}$ (or rather again its specialization) is then categorified by $\Perv_{\GO}(\Gr,\bk)$ (via the pullback functor to $\Fl$), and similarly $\mathcal{J}$ is categorified by $\Perv_{\IW}(\Gr, \bk)$. From this perspective, the functor in~\eqref{eqn:functor-equiv-intro} is a categorical incarnation of the map $k \mapsto J_\rho \cdot k$ considered by Lusztig, and the fact that it is an equivalence can be seen as a categorical upgrade of~\cite[Corollary~6.8]{lusztig}.

\subsection{Relation with results of Frenkel--Gaitsgory--Kazh\-dan--Vilonen}

Finally, a third hint for this equivalence can be found in work of the second author
with Frenkel, Kazhdan and Vilonen~\cite{fgkv,fgv} and more recent work~\cite{gaitsgoryWhit}. 
Working in the context of $\mathcal{D}$-modules over a ground field of characteristic $0$
or $\ell$-adic sheaves over a ground field of arbitrary characteristic, in~\cite{fgv} the authors 
defined a candidate for the role of a Whittaker category on $\Gr$ using geometry of a 
complete curve and moduli stacks of bundles over it. A more direct, local definition 
of such a category is proposed in~\cite{gaitsgoryWhit}, where it is also shown that the two constructions 
produce equivalent categories; the methods of~\cite{gaitsgoryWhit} rely on recently developed 
techniques of $\infty$-categories.  Notice also that~\cite[Theorem 2.7.1(2)]{raskin} implies an 
equivalence between the above categories and the Iwahori--Whittaker category.

It was shown in~\cite{fgv} (see in particular~\cite[\S\S1.2.4--1.2.5]{fgv}) that their 
Whittaker category is a free right module over the monoidal category 
$\Perv_{\GO}(\Gr,\bk)$.
Thus, combining these works, we obtain another proof of the equivalence between
 $\Perv_{\IW}(\Gr,\bk)$ and $\Perv_{\GO}(\Gr,\bk) \cong \Rep(G^\vee_\bk)$, valid when we 
work with characteristic-$0$ coefficients. 

The above results cannot be automatically carried over to our present 
context, which is that of sheaves with coefficients of positive characteristic. 
However, the latter equivalence does generalize to our context, and amounts 
to our equivalence~\eqref{eqn:functor-equiv-intro}. Of course, we use different methods to prove it. 


As explained in~\cite[\S 1.1]{fgv}, in the case of characteristic-$0$ coefficients these properties are closely related to the Casselmann--Shalika formula, and in fact our proof uses the geometric counterpart to this formula known as the \emph{geometric Casselmann--Shalika formula}. (See also~\cite[\S 1.1.1]{ab} for the relation between the ``Whittaker'' and ``Iwahori--Whittaker'' conditions in the classical setting of modules over the affine Hecke algebra.)

\subsection{Application to tilting objects}

In Section~\ref{sec:applications} we provide a number of applications of this statement. An important one is concerned with the description of the \emph{tilting objects} in the Satake category. Namely, in the case when $\bk$ is a field of characteristic $\ell$, the \emph{tilting modules} (see e.g.~\cite[Chap.~E]{jantzen}) form an interesting family of objects in the category $\Rep(G^\vee_\bk)$. It is a natural question to try to characterize topologically the $\GO$-equivariant perverse sheaves on $\Gr$ corresponding to these objects. A first answer to this question was obtained by Juteau--Mautner--Williamson~\cite{jmw2}: they showed that, under some explicit conditions on $\ell$, the \emph{parity sheaves} on $\Gr$ for the stratification by $\GO$-orbits are perverse, and that their images under~\eqref{eqn:Satake} are the indecomposable tilting objects in $\Rep(G^\vee_\bk)$. This result was later extended by Mautner and the fourth author~\cite{mr} to the case when $\ell$ is good for $G$, and it played a crucial role in the proof (by Achar and the fifth author) of the Mirkovi\'c--Vilonen conjecture (or more precisely the corrected version of this conjecture suggested by Juteau~\cite{juteau}) on stalks of standard objects in the Satake category~\cite{achar-rider}.

It is known (see~\cite{jmw2}) that if $\ell$ is bad then the $\GO$-constructible parity sheaves on $\Gr$ are not necessarily perverse; so the answer to our question must be different in general. A conjecture was proposed by Juteau--Mautner--Williamson to cover this case, namely that the perverse cohomology objects of the parity complexes are tilting in $\Perv_{\GO}(\Gr,\bk)$ (so that all tilting objects are obtained by taking direct sums of direct summands of the objects obtained in this way). In our main application we confirm this conjecture, see Theorem~\ref{thm:parity-tilting}, hence obtain an answer to our question in full generality.

Using this description we prove a geometric analogue of a fundamental result for tilting modules, namely that these objects are preserved by tensor product and by restriction to a Levi subgroup. On the representation-theoretic side, the first proof of these results for general reductive groups is due to Mathieu~\cite{mathieu}, after partial results of Wang~\cite{wang-tilting} and Donkin~\cite{donkin}. A later proof was also found by Polo~\cite{polo} and Littelmann~\cite{littelmann} (with some restrictions) using Standard Monomial Theory. Finally, a more general statement (in terms of based modules for quantum groups) was obtained by Lusztig, see~\cite[\S 27.3]{lusztig-qg} (see also~\cite{paradowski,kaneda}).

In fact, combined with the Satake equivalence, our proof can also be considered as providing a new complete proof of these properties of tilting modules. In~\cite{br}, Baumann and the fourth author also use these facts to obtain a slight simplification of the proof of the geometric Satake equivalence. (Note that the proofs in the present paper do not rely on the latter result.)

%
%
%

\subsection{Acknowledgements}

The final stages of this work were accomplished while the fourth author was a fellow of the Freiburg Institute for Advanced Studies, as part of the Research Focus ``Cohomology in Algebraic Geometry and Representation Theory'' led by A. Huber--Klawitter, S. Kebekus and W. Soergel.

We thank P.~Achar and G.~Williamson for useful discussions on the subject of this paper, and the referees for their helpful comments.

\section{Constructible sheaves on affine Grassmannians and affine flag varieties}
\label{sec:cont-sheaves}


\subsection{Notation}
\label{ss:notation}

Let $\F$ be an algebraically closed field of characteristic $p>0$. Let $G$ be a connected reductive algebraic group over $\F$, let $B^- \subset G$ be a Borel subgroup, and let $T \subset B^-$ be a maximal torus. Let also $B^+ \subset G$ be the Borel subgroup opposite to $B^-$ (with respect to $T$), and let $U^+$ be its unipotent radical.

We denote by $\bX:=X^*(T)$ the character lattice of $T$, by $\bX^\vee := X_*(T)$ its cocharacter lattice, by $\Delta \subset \bX$ the root system of $(G,T)$, and by $\Delta^\vee \subset \bX^\vee$ the corresponding coroots. We choose the system of positive roots $\Delta^+ \subset \Delta$ consisting of the $T$-weights in $\mathrm{Lie}(U^+)$, and denote by $\bX^\vee_+ \subset \bX^\vee$, resp.~$\bX^\vee_{++} \subset \bX^\vee$ the corresponding subset of dominant cocharacters, resp.~of strictly dominant cocharacters. We also denote by $\Delta_{\mathrm{s}} \subset \Delta$ the corresponding subset of simple roots, and set
\[
 \rho = \frac{1}{2} \sum_{\alpha \in \Delta^+} \alpha \quad \in \mathbb{Q} \otimes_{\mathbb{Z}} \bX.
\]
 
 For any $\alpha \in \Delta_{\mathrm{s}}$ we choose an isomorphism between the additive group $\Ga$ and the root subgroup $U_\alpha$ of $G$ associated with $\alpha$, and denote it $u_\alpha$.
 
 We will assume\footnote{This assumption holds in particular if $G$ is semisimple of adjoint type.} that there exists $\varsigma \in \bX^\vee$ such that $\langle \varsigma, \alpha \rangle = 1$ for any $\alpha \in \Delta_{\mathrm{s}}$; then we have $\bX^\vee_{++} = \bX^\vee_+ + \varsigma$. (Such a cocharacter might not be unique; we fix a choice once and for all.)
 
 Let $\Wf$ be the Weyl group of $(G,T)$, and let $W:=\Wf \ltimes \bX^\vee$ be the corresponding (extended) affine Weyl group. For $\lambda \in \bX^\vee$ we will denote by $t_\lambda$ the associated element of $W$. If $w \in W$ and $w = t_\lambda v$ with $\lambda \in \bX^\vee$ and $v \in \Wf$, we set
%
 \[
  \ell(w) = \sum_{\substack{\alpha \in \Delta^+ \\ v(\alpha) \in \Delta^+}} |\langle \lambda, \alpha \rangle| + \sum_{\substack{\alpha \in \Delta^+ \\ v(\alpha) \in -\Delta^+}} |1+\langle \lambda, \alpha \rangle|.
 \]
 Then the restriction of $\ell$ to the semi-direct product $W^{\mathrm{Cox}}$ of $\Wf$ with the coroot lattice is the length function for a natural Coxeter group structure, and if we set $\Omega := \{w \in W \mid \ell(w)=0\}$ then multiplication induces a group isomorphism 
 \[
 W^{\mathrm{Cox}} \rtimes \Omega \xrightarrow{\sim} W.
\]

\subsection{The affine Grassmannian and the affine flag variety}
\label{ss:Gr-Fl}

For the facts we state here, we refer to~\cite{faltings}.

We set $\scK:=\F( \hspace{-0.5pt} (z) \hspace{-0.5pt} )$, $\scO := \F[ \hspace{-0.5pt} [z] \hspace{-0.5pt} ]$, and consider the ind-group scheme $G_\scK$ (denoted $LG$ in~\cite{faltings}) and its group subscheme $G_\scO$ (denoted $L^+ G$ in~\cite{faltings}).
We denote by $I^- \subset G_\scO$ the Iwahori subgroup associated with $B^-$, i.e.~the inverse image of $B^-$ under the morphism $G_\scO \to G$ sending $z$ to $0$. We consider the affine Grassmannian $\Gr$ and the affine flag variety $\Fl$ defined by
\[
\Gr := G_\scK/G_\scO, \quad \Fl := G_\scK / I^-.
\]
We denote by $\pi : \Fl \to \Gr$ the projection morphism.

Any $\lambda \in \bX^\vee$ defines a point $z^\lambda \in T_\scK \subset G_\scK$, hence a point $L_\lambda := z^{\lambda} \GO \in \Gr$. We set
\[
\Gr^\lambda := G_\scO \cdot L_\lambda.
\]
Then $\Gr^\lambda$ only depends on the $\Wf$-orbit of $\lambda$. Moreover,
the Bruhat decomposition implies that
\[
\Gr = 
\bigsqcup_{\lambda \in \bX^\vee_+} \Gr^\lambda.
\]
We will denote by $j_\lambda : \Gr^\lambda \to \Gr$ the embedding.

For $\lambda \in \bX^\vee_+$, we will denote by $P_\lambda \subset G$ the parabolic subgroup of $G$ containing $B^-$ associated with the subset of $\Delta_{\mathrm{s}}$ consisting of those simple roots which are orthogonal to $\lambda$. Then $P_\lambda$ is the stabilizer of $L_\lambda$ in $G$, so that we have a canonical isomorphism $G/P_\lambda \xrightarrow{\sim} G \cdot L_\lambda$. Under this identification, it is known that the map $p_\lambda : \Gr^\lambda \to G/P_\lambda$ sending $x$ to $\lim_{t \to 0} t \cdot x$ (where we consider the $\Gm$-action on $\Gr$ via loop rotation) is a morphism of algebraic varieties, and realizes $\Gr^\lambda$ as an affine bundle over $G/P_\lambda$ (see e.g.~\cite[Lemme~2.3]{ngo-polo}).


%
%

It is well known (see e.g.~\cite{lusztig} or~\cite[\S 2]{ngo-polo}) that if $\lambda \in \bX^\vee_+$, then we have
\[
\dim(\Gr^\lambda) = \langle \lambda, 2\rho \rangle = \sum_{\alpha \in \Delta^+} \langle \lambda, \alpha \rangle.
\]
We denote by $\preceq$ the order on $\bX^\vee_+$ determined by
\[
\lambda \preceq \mu \quad \text{ iff $\mu - \lambda$ is a sum of positive coroots.}
\]
Then for $\lambda, \mu \in \bX^\vee_+$ we have
\[
\Gr^\lambda \subset \overline{\Gr^\mu} \quad \text{iff} \quad \lambda \preceq \mu.
\]


\subsection{Some categories of sheaves on \texorpdfstring{$\Gr$}{Gr} and \texorpdfstring{$\Fl$}{Fl}}
\label{ss:categories}

We let $\ell$ be a prime number which is different from $p$, and let $\bk$ be either a finite extension of $\Ql$, or the ring of integers in such an extension, or a finite field of characteristic $\ell$. In this paper we will be concerned with the constructible derived categories $\Db_c(\Gr,\bk)$ and $\Db_c(\Fl,\bk)$ of \'etale $\bk$-sheaves on $\Gr$ and $\Fl$, respectively.
If $K \subset \GO$ is a subgroup, we will also denote by $\Db_K(\Gr,\bk)$ and $\Db_K(\Fl,\bk)$ the (constructible) $K$-equivariant derived category of $\bk$-sheaves on $\Gr$ and $\Fl$, in the sense of Bernstein--Lunts~\cite{bernstein-lunts}. Each of these categories is endowed with the perverse t-structure, whose heart will be denoted $\Perv(\Gr,\bk)$, $\Perv(\Fl,\bk)$, $\Perv_K(\Gr,\bk)$ and $\Perv_K(\Fl,\bk)$ respectively.

\begin{rmk}\phantomsection
\label{rmk:definition-categories}
\begin{enumerate}
\item
Since $\Gr$ and $\Fl$ are ind-varieties and not varieties, the definition of the categories considered above requires some care; 
see e.g.~\cite[\S 2.2]{nadler} or~\cite[Appendix]{gaitsgory} for details. 
We will not mention this point in the body of the paper, and simply refer to objects in these categories as complexes of sheaves.
\item
\label{it:Whittaker-analytic}
Recall that by \cite{mv} the category $\Rep(G^\vee_R)$ of algebraic representations of the
group scheme $G^\vee_R$ over any Noetherian commutative base ring $R$ of finite global dimension is equivalent to the corresponding
category of spherical perverse sheaves on $\Gr_{\mathbb{C}}$ in its analytic topology. 
More restrictive assumptions on the base ring in the present paper come from our need to 
use the Artin--Schreier sheaf (see~\S\ref{ss:def-IW}), which is only defined in the context of \'etale sheaves over a variety 
in positive characteristic; this setting yields categories of sheaves with coefficients
as above. Notice however that some constructions involving the Artin--Schreier
sheaf do have an analogue for constructible sheaves in the classical topology (see e.g.~\cite{wang} for the example of Fourier--Deligne transform). We expect that such a
counterpart of the Whittaker category can also be defined (see \cite[Remark 10.3.6]{ag}
for a possible approach); this would allow one to extend our main result
to more general coefficient rings.
\end{enumerate}
\end{rmk}

If $K' \subset K \subset \GO$ are subgroups, we will denote by
\[
 \For^{K}_{K'} : \Db_K(\Gr,\bk) \to \Db_{K'}(\Gr,\bk), \quad \For^{K}_{K'} : \Db_K(\Fl,\bk) \to \Db_{K'}(\Fl,\bk)
\]
the natural forgetful functors. If $K/K'$ is of finite type, these functors have both a right and a left adjoint, which will be denoted $\rInd_{K'}^K$ and $\lInd_{K'}^K$ respectively. If we write $X$ for $\Gr$ or $\Fl$, these functors can be described explicity by
\[
 \rInd_{K'}^K(\cF) = a_* \bigl( \underline{\bk} \, \widetilde{\boxtimes} \, \cF \bigr) \quad \text{and} \quad \lInd_{K'}^K(\cF) = a_! \bigl( \underline{\bk} \, \widetilde{\boxtimes} \, \cF \bigr)[2(\dim K/ K')],
\]
where $\underline{\bk} \, \widetilde{\boxtimes} \, (-)$ is the functor sending an object $\cF$ to the only object in $\Db_K(K \times^{K'} X, \bk)$ whose pullback to $K \times X$ (an object of $\Db_{K \times K'}(K \times X,\bk)$, where $K'$ acts on $K \times X$ via $h \cdot (g,x)=(gh^{-1}, h \cdot x)$) is isomorphic to $\underline{\bk}_K \lboxtimes_\bk \cF$.
When $K'=\{1\}$ we will write $\For_K$ for $\For^K_{\{1\}}$.

\subsection{Convolution}
\label{ss:convolution}

We will make extensive use of the convolution construction, defined as follows. Consider $\cF,\cG$ in $\Db_{\GO}(\Gr,\bk)$, and the diagram
\[
 \Gr \times \Gr \xleftarrow{p^\Gr} \GK \times \Gr \xrightarrow{q^\Gr} \GK \times^{\GO} \Gr \xrightarrow{m^\Gr} \Gr,
\]
where $p^\Gr$ and $q^\Gr$ are the quotient morphisms, and $m^\Gr$ is induced by the $\GK$-action on $\Gr$. Consider the action of $\GO \times \GO$ on $\GK \times \Gr$ defined by
\[
 (g_1,g_2) \cdot (h_1,h_2\GO) = (g_1h_1(g_2)^{-1}, g_2 h_2\GO).
\]
Then the functor $(q^\Gr)^*$ induces an equivalence of categories
\[
 \Db_{\GO}(\GK \times^{\GO} \Gr,\bk) \xrightarrow{\sim} \Db_{\GO \times \GO}(\GK \times \Gr,\bk).
\]
Hence there exists a unique object $\cF \, \widetilde{\boxtimes} \, \cG$ such that
\[
 (q^\Gr)^* \bigl( \cF \, \widetilde{\boxtimes} \, \cG \bigr) = (p^\Gr)^* \bigl( \cF \lboxtimes_{\bk} \cG \bigr).
\]
Then the convolution product of $\cF$ and $\cG$ is defined by
\begin{equation}
\label{eqn:formula-convolution}
\cF \star^{\GO} \cG := (m^\Gr)_* \bigl( \cF \, \widetilde{\boxtimes} \, \cG \bigr).
\end{equation}
This construction endows the category $\Db_{\GO}(\Gr,\bk)$ with the structure of a monoidal category. A similar formula defines a right action of this monoidal category on $\Db_K(\Gr,\bk)$, for any $K \subset \GO$. (This action will again be denoted $\star^{\GO}$.)

\begin{rmk}
 Note that if $\bk$ is not a field, the convolution product considered above is \emph{not} same as the one considered (when $\cF$ and $\cG$ are perverse sheaves) in~\cite{mv}: the product considered in~\cite{mv} is rather defined as $\pH^0(\cF \star^{\GO} \cG)$ in our notation.
\end{rmk}

\begin{lem}
\label{lem:convolution-exact}
 Assume that $\bk$ is a field. If $\cF$ belongs to $\Perv(\Gr,\bk)$ and $\cG$ belongs to $\Perv_{\GO}(\Gr,\bk)$, then $\cF \star^{\GO} \cG$ belongs to $\Perv(\Gr,\bk)$.
\end{lem}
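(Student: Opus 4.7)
The strategy has two main steps: first, verify that $\cF \, \widetilde{\boxtimes} \, \cG$ is a perverse sheaf on $\GK \times^\GO \Gr$; second, verify that its pushforward under $m^\Gr$ is perverse on $\Gr$. For the first step, the defining relation $(q^\Gr)^*(\cF \, \widetilde{\boxtimes} \, \cG) \cong (p^\Gr)^*(\cF \lboxtimes \cG)$, combined with the smoothness of the two quotient morphisms $p^\Gr$ and $q^\Gr$, reduces perversity of $\cF \, \widetilde{\boxtimes} \, \cG$ (up to the appropriate shift) to that of $\cF \lboxtimes \cG$ on $\GK \times \Gr$. Over a field, the external tensor product of perverse sheaves is perverse by K\"unneth.

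For the second step, I would first reduce by devissage on $\cG$ to the case where $\cG$ is supported on a single closed orbit closure $\overline{\Gr^\mu}$ with $\mu \in \bX^\vee_+$. This reduction is valid because $(m^\Gr)_*$ is triangulated, perverse sheaves are closed under extensions, and every $\GO$-equivariant perverse sheaf admits a finite filtration whose subquotients are each supported on individual orbit closures. Choosing $\lambda \in \bX^\vee_+$ with $\mathrm{supp}(\cF) \subset \overline{\Gr^\lambda}$, the relevant pushforward is that of the perverse sheaf $\cF \, \widetilde{\boxtimes} \, \cG$, now supported on the finite-type convolution variety $\overline{\Gr^\lambda} \, \widetilde{\times} \, \overline{\Gr^\mu}$, along the proper morphism
\[
\overline{m}: \overline{\Gr^\lambda} \, \widetilde{\times} \, \overline{\Gr^\mu} \to \overline{\Gr^{\lambda+\mu}}.
\]

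The key geometric input is the classical semi-smallness of $\overline{m}$: for each $\nu \preceq \lambda + \mu$, every fiber of $\overline{m}$ over a point of $\Gr^\nu$ has dimension at most $(\dim \Gr^\lambda + \dim \Gr^\mu - \dim \Gr^\nu)/2$. This bound follows from the Lusztig--Mirkovi\'c--Vilonen dimension estimates for $\GO$-orbits on $\Gr$, relying only on the coweight combinatorics recalled in \S\ref{ss:Gr-Fl}. The main obstacle is then to convert this bound into the statement that $\overline{m}_*$ preserves perversity, \emph{without} assuming $\GO$-equivariance of $\cF$: the standard stratified argument would require $\cF \, \widetilde{\boxtimes} \, \cG$ to be constructible with respect to the $\GO$-orbit stratification on both factors, which fails here. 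Instead, the dimension-counting half of the semi-small argument (which is insensitive to the stratification of $\cF$) yields $\pH^i(\overline{m}_*(\cF \, \widetilde{\boxtimes} \, \cG)) = 0$ for $i > 0$; one then obtains the opposite vanishing $\pH^i = 0$ for $i < 0$ via Verdier duality, using that the Verdier dual of $\cF \, \widetilde{\boxtimes} \, \cG$ is again perverse and that $\overline{m}_*$ commutes with Verdier duality since $\overline{m}$ is proper. Combining the two vanishings yields perversity of $\cF \star^\GO \cG$.
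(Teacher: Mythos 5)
Your first step (perversity of $\cF \, \widetilde{\boxtimes} \, \cG$) and your use of Verdier duality at the end are fine, but the pivotal claim of your second step is exactly where the argument breaks: it is not true that ``the dimension-counting half of the semi-small argument is insensitive to the stratification of $\cF$.'' The crude fiber bound $\dim \overline{m}^{-1}(y) \le \langle \lambda+\mu-\nu,\rho\rangle$ for $y \in \Gr^\nu$, combined only with the support conditions on $\cF \, \widetilde{\boxtimes} \, \cG$, shows that the stalks of $\overline{m}_*(\cF \, \widetilde{\boxtimes} \, \cG)$ over $\Gr^\nu$ vanish in degrees $> \langle \lambda+\mu-\nu,\rho\rangle$, which is far weaker than the vanishing in degrees $> -\langle \nu,2\rho\rangle$ required for ${}^p D^{\le 0}$. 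The genuine semi-small argument gets the stronger bound by working stratum by stratum for a stratification of the source along which the perverse sheaf is constructible: one needs \emph{both} the stratified fiber estimate $\dim \bigl( \overline{m}^{-1}(y)\cap T \bigr) \le \frac{1}{2}(\dim T - \dim \Gr^\nu)$ \emph{and} the fact that the restriction of the perverse sheaf to each stratum $T$ sits in degrees $\le -\dim T$; the latter is precisely the constructibility of $\cF$ along the $\GO$-orbit stratification that you do not have. That ``proper $+$ semi-small in the crude sense $+$ perverse input'' does not imply output in ${}^p D^{\le 0}$ is shown by a toy example: contract a smooth projective curve $C$ inside a smooth surface $S$ to a point; the map is semi-small, $\underline{\bk}_C[1]$ is perverse on $S$, but its pushforward is a skyscraper whose cohomology includes $\mathsf{H}^2(C,\bk)$ in degree $1$, so it is not in ${}^p D^{\le 0}$. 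So the upper-bound half, which you treat as the easy half, is the actual content of the lemma, and your duality step (which is otherwise legitimate, since $\mathbb{D}(\cF \, \widetilde{\boxtimes} \, \cG) \cong \mathbb{D}\cF \, \widetilde{\boxtimes} \, \mathbb{D}\cG$ with $\mathbb{D}\cG$ still equivariant) has nothing to bootstrap from.

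The paper's proof avoids this issue entirely: it invokes Gaitsgory's description of $\cF \star^{\GO} \cG$ as nearby cycles in a Beilinson--Drinfeld-type degeneration (\cite[Proposition~6]{gaitsgory}), observing that the argument there works for general coefficients once \cite[Proposition~1]{gaitsgory} is replaced by \cite[Proposition~2.2]{mv}; perversity then follows from the t-exactness of shifted nearby cycles, and this mechanism makes no constructibility assumption whatsoever on the first factor $\cF$. This is essentially the only known way to handle an arbitrary perverse $\cF$ (semi-smallness suffices when both factors are $\GO$-constructible, as in \cite{mv}), so to repair your proof you should replace the semi-small step by the fusion/nearby-cycles argument rather than try to salvage the dimension count.
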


\begin{proof}
 This claim follows from the description of convolution in terms of nearby cycles obtained in~\cite[Proposition~6]{gaitsgory}. (In~\cite{gaitsgory}, only the case of characteristic-$0$ coefficients is treated. However the same proof applies in general, simply replacing~\cite[Proposition~1]{gaitsgory} by~\cite[Proposition~2.2]{mv}.) An earlier and different proof is also suggested in~\cite[Remark~12.11]{fm}.
\end{proof}

\begin{rmk}
\begin{enumerate}
 \item 
 The description of convolution in terms of nearby cycles as in~\cite{gaitsgory} works for general coefficients (if convolution is defined as in~\eqref{eqn:formula-convolution}). The nearby cycles functor is t-exact in this generality, but this description also involves a (derived) external tensor product. If $\bk$ is not a field this tensor product operation is not t-exact, which explains the failure of Lemma~\ref{lem:convolution-exact} in this setting.
 \item
 In case $\cF$ is $\GO$-equivariant, the fact that $\cF \star^{\GO} \cG$ is perverse can also be deduced from the stratified semismallness of the convolution diagram. The stratified semismallness property follows from Lusztig's results in~\cite{lusztig} (see~\cite[Remark 1.6.5(2)]{br} for details); another proof using semi-infinite orbits is explained
in the later work~\cite{mv}.
 \end{enumerate}
\end{rmk}

A very similar construction as the one considered above, based on the diagram
\[
 \Fl \times \Fl \xleftarrow{p^\Fl} \GK \times \Fl \xrightarrow{q^\Fl} \GK \times^{I^-} \Fl \xrightarrow{m^\Fl} \Fl,
\]
provides a convolution product $\star^{I^-}$ on $\Db_{I^-}(\Fl,\bk)$, which endows this category with the structure of a monoidal category, and defines a right action of this monoidal category on $\Db_K(\Fl,\bk)$, for any $K \subset \GO$.
Again the same formulas, using the diagram
\[
 \Fl \times \Gr \xleftarrow{p^{\Fl}_{\Gr}} \GK \times \Gr \xrightarrow{q^{\Fl}_{\Gr}} \GK \times^{I^-} \Gr \xrightarrow{m^{\Fl}_{\Gr}} \Gr,
\]
allows to define a bifunctor
\[
 \Db_K(\Fl,\bk) \times \Db_{I^-}(\Gr,\bk) \to \Db_K(\Gr,\bk),
\]
which will once again be denoted $\star^{I^-}$.

The following lemma is standard; its proof is left to interested readers.

\begin{lem}
\label{lem:convolution-For}
 For any subgroup $K \subset \GO$, any $\cF$ in $\Db_K(\Fl,\bk)$ and any $\cG$ in $\Db_{\GO}(\Gr,\bk)$, there exists a canonical isomorphism
 \[
  \cF \star^{I^-} \For^{\GO}_{I^-}(\cG) \cong \pi_*(\cF) \star^{\GO} \cG
 \]
 in $\Db_K(\Gr,\bk)$.
\end{lem}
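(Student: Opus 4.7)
The plan is to factor the multiplication map $m^{\Fl}_{\Gr}$ through the natural projection $\beta: \GK \times^{I^-} \Gr \to \GK \times^{\GO} \Gr$, $[g,x]_{I^-} \mapsto [g,x]_{\GO}$. Since $m^\Gr \circ \beta = m^{\Fl}_\Gr$, we obtain
\[
\cF \star^{I^-} \For^{\GO}_{I^-}(\cG) = (m^\Gr)_* \, \beta_* \bigl( \cF \, \widetilde{\boxtimes} \, \For^{\GO}_{I^-}(\cG) \bigr),
\]
so it suffices to construct a canonical isomorphism
\[
\beta_* \bigl( \cF \, \widetilde{\boxtimes} \, \For^{\GO}_{I^-}(\cG) \bigr) \cong \pi_*(\cF) \, \widetilde{\boxtimes} \, \cG
\]
in $\Db_K(\GK \times^{\GO} \Gr, \bk)$, after which applying $(m^\Gr)_*$ delivers the stated formula.

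For this I would apply proper base change. The map $\beta$ is proper, since its fibres are isomorphic to the finite-dimensional partial flag variety $I^- \backslash \GO$. Form the Cartesian square
\[
\begin{tikzcd}
Z \ar[r, "s"] \ar[d, "t"'] & \GK \times^{I^-} \Gr \ar[d, "\beta"] \\
\GK \times \Gr \ar[r, "q^{\Gr}"'] & \GK \times^{\GO} \Gr
\end{tikzcd}
\]
where a direct parametrization of the fibre identifies $Z \cong \GK \times \Gr \times (I^- \backslash \GO)$, with $t$ the projection onto the first two factors and $s((g,x), I^- h) = [g h^{-1}, hx]_{I^-}$. Proper base change then yields $(q^\Gr)^* \beta_* \cong t_* s^*$. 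On the other hand, by the very definition of the twisted product we have $(q^\Gr)^*(\pi_*\cF \, \widetilde{\boxtimes} \, \cG) = (p^\Gr)^*(\pi_*\cF \lboxtimes \cG)$, and since pullback along the $\GO$-torsor $q^\Gr$ is faithful on the equivariant descent data, it is enough to exhibit an isomorphism $t_* s^* \bigl( \cF \, \widetilde{\boxtimes} \, \For^{\GO}_{I^-}(\cG) \bigr) \cong (p^\Gr)^*(\pi_*\cF \lboxtimes \cG)$.

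Unpacking the definition, $s^*(\cF \, \widetilde{\boxtimes} \, \For^{\GO}_{I^-}(\cG))$ at a point $((g,x), I^- h)$ has value $\cF_{g h^{-1} I^-} \otimes \cG_{hx}$, and the $\GO$-equivariance structure of $\cG$ supplies a canonical identification $\cG_{hx} \cong \cG_x$. Under the isomorphism $I^- \backslash \GO \xrightarrow{\sim} g \GO / I^- = \pi^{-1}(g \GO)$ given by $I^- h \mapsto g h^{-1} I^-$, fibrewise integration along $t$ becomes integration over the fibre of $\pi$, producing $(\pi_*\cF)_{g\GO} \otimes \cG_x$ at the point $(g,x)$, which is exactly the stalk of $(p^\Gr)^*(\pi_*\cF \lboxtimes \cG)$. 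The main obstacle is to organise the $\GO$-equivariance data of $\cG$ and the Künneth-type comparison coherently through the base change, so that the pointwise identifications at stalks assemble into a genuinely canonical isomorphism of sheaves, rather than only a fibrewise one.
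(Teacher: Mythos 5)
The paper gives no argument for this lemma (it is explicitly ``left to interested readers''), so your proposal must stand on its own. Its skeleton is correct and is the standard one: the factorization $m^{\Fl}_{\Gr}=m^{\Gr}\circ\beta$ reduces everything to the single identity $\beta_*\bigl(\cF\,\widetilde{\boxtimes}\,\For^{\GO}_{I^-}(\cG)\bigr)\cong\pi_*(\cF)\,\widetilde{\boxtimes}\,\cG$, the map $\beta$ is indeed proper (its fibres are isomorphic to $\GO/I^-\cong G/B^-$), and your Cartesian square with $Z\cong\GK\times\Gr\times(I^-\backslash\GO)$ and the formula for $s$ are right. The gap is the one you name in your last sentence, and it is not cosmetic: after the base change you only compute stalks, and stalkwise identifications do not even produce a morphism in the derived category, let alone the \emph{canonical} isomorphism the lemma asserts; moreover, to descend an isomorphism of $(q^{\Gr})^*$-pullbacks back to $\Db_K(\GK\times^{\GO}\Gr,\bk)$ you must know that your comparison map is compatible with the equivariant descent data defining the twisted product, which is precisely the coherence you postpone. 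In other words, the entire content of the lemma sits in the step you have left open.

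The standard way to close it is to never leave the level of sheaves. Let $\tilde{p}:\GK\times^{I^-}\Gr\to\Fl$ and $p':\GK\times^{\GO}\Gr\to\Gr$ be the maps induced by $g\mapsto gI^-$ and $g\mapsto g\GO$ on the first factor, and let $\underline{\bk}_{\Gr}\,\widetilde{\boxtimes}\,\cG$ denote the object of $\Db_K(\GK\times^{\GO}\Gr,\bk)$ whose pullback under $q^{\Gr}$ is the pullback of $\cG$ along the second projection. The defining uniqueness property of the twisted product (applied to $q^{\Fl}_{\Gr}$, all maps being equivariant and the comparison morphisms being base-change/adjunction morphisms, hence compatible with the descent data) gives
\[
\cF\,\widetilde{\boxtimes}\,\For^{\GO}_{I^-}(\cG)\;\cong\;\tilde{p}^{\,*}\cF\otimes_{\bk}\beta^*\bigl(\underline{\bk}_{\Gr}\,\widetilde{\boxtimes}\,\cG\bigr).
\]
Next check that the square formed by $\beta$, $\tilde{p}$, $\pi$ and $p'$ is Cartesian (the map $[g,x]_{I^-}\mapsto(gI^-,[g,x]_{\GO})$ is an isomorphism onto $\Fl\times_{\Gr}(\GK\times^{\GO}\Gr)$); then the projection formula along $\beta$ and proper base change for $\pi$ yield $\beta_*\bigl(\tilde{p}^{\,*}\cF\otimes\beta^*(\underline{\bk}_{\Gr}\,\widetilde{\boxtimes}\,\cG)\bigr)\cong (p')^*\pi_*(\cF)\otimes(\underline{\bk}_{\Gr}\,\widetilde{\boxtimes}\,\cG)$, and one more application of the uniqueness property identifies this with $\pi_*(\cF)\,\widetilde{\boxtimes}\,\cG$. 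This packages all the equivariance bookkeeping once and for all in the uniqueness statement instead of trying to assemble it stalk by stalk; with this replacement (or with the analogous descent argument along the $I^-$-torsor $\GK\times\Gr\times\GO\to Z$ inside your own diagram) your argument becomes a complete proof.
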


In the following lemma we consider the convolution bifunctor
\[
 (-) \star^{\GO} (-) : \Db_K(\Gr,\bk) \times \Db_{\GO}(\Fl,\bk) \to \Db_K(\Fl,\bk)
\]
(constructed once again using formulas similar to those above). Its proof is easy, and
left to the reader.

\begin{lem}
\label{lem:convolution-induction}
 Let $\cF$ in $\Db_K(\Gr,\bk)$ and $\cG$ in $\Db_{I^-}(\Fl,\bk)$. Then there exists a canonical isomorphism
 \[
  \pi^*(\cF) \star^{I^-} \cG \cong \cF \star^{\GO} \rInd_{I^-}^{\GO}(\cG)
 \]
 in $\Db_K(\Fl,\bk)$.
\end{lem}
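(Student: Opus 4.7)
The plan is to realise both sides as $m_*$-pushforwards of a common twisted product living on $\GK \times^{I^-} \Fl$. The geometric setup is that the canonical identification $\GO/I^- \cong G/B^-$ gives
$$\GK \times^{I^-} \Fl \;\cong\; \GK \times^{\GO} \bigl(\GO \times^{I^-} \Fl\bigr),$$
under which the quotient $\phi : \GK \times^{I^-} \Fl \to \GK \times^{\GO} \Fl$, $[g,x]_{I^-} \mapsto [g,x]_{\GO}$, becomes $\mathrm{id}_{\GK} \times^{\GO} a$, where $a : \GO \times^{I^-} \Fl \to \Fl$ is the action map appearing in the definition of $\rInd_{I^-}^{\GO}$. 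In particular $\phi$ is proper with fibres $G/B^-$, and a direct verification shows that the action maps entering the two convolutions are compatible via $m^\Fl = m \circ \phi$, where $m : \GK \times^{\GO} \Fl \to \Fl$ is the one used for $\star^{\GO}$.

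The heart of the argument is then to establish the twisted-product identity
$$\phi_*\bigl(\pi^*(\cF) \,\widetilde{\boxtimes}\, \cG\bigr) \;\cong\; \cF \,\widetilde{\boxtimes}\, \rInd_{I^-}^{\GO}(\cG)$$
on $\GK \times^{\GO} \Fl$. I would argue by descent along the quotient $\GK \times \Fl \to \GK \times^{\GO} \Fl$: by the equivalences recalled in~\S\ref{ss:convolution}, each equivariant object downstairs is determined by its equivariant pullback. Expanding $\rInd_{I^-}^{\GO}(\cG) = a_*(\underline{\bk} \,\widetilde{\boxtimes}\, \cG)$ and applying proper base change to the Cartesian square
$$\begin{array}{ccc}
\GK \times (\GO \times^{I^-} \Fl) & \longrightarrow & \GK \times^{I^-} \Fl \\
\downarrow & & \downarrow \phi \\
\GK \times \Fl & \longrightarrow & \GK \times^{\GO} \Fl
\end{array}$$
then identifies the two pullbacks, after which descent reassembles them into the asserted isomorphism of equivariant objects.

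Applying $m_*$ to the displayed isomorphism and using $m^\Fl = m \circ \phi$ yields
$$\cF \star^{\GO} \rInd_{I^-}^{\GO}(\cG) \;=\; m_*\bigl(\cF \,\widetilde{\boxtimes}\, \rInd_{I^-}^{\GO}(\cG)\bigr) \;\cong\; m^\Fl_*\bigl(\pi^*(\cF) \,\widetilde{\boxtimes}\, \cG\bigr) \;=\; \pi^*(\cF) \star^{I^-} \cG,$$
which is the claim. The only really substantive point, and the main obstacle I expect, is the descent step: one must track the $I^-$- and $\GO$-equivariance structures carefully through the base change so as to promote a bare-sheaf isomorphism into one of the equivariant objects that actually define the twisted products.
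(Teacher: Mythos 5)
Your argument is correct, and it is exactly the proof the authors had in mind: the paper omits it (``its proof is easy, and left to the reader''), and the intended route is precisely yours --- identify $\GK \times^{I^-} \Fl \cong \GK \times^{\GO}(\GO \times^{I^-}\Fl)$, push forward along the proper map $\phi$ with fibres $\GO/I^- \cong G/B^-$ using $m^\Fl = m \circ \phi$, and identify $\phi_*(\pi^*\cF \, \widetilde{\boxtimes} \, \cG)$ with $\cF \, \widetilde{\boxtimes} \, \rInd_{I^-}^{\GO}(\cG)$ by proper base change, the K\"unneth isomorphism for $\mathrm{id} \times a$ (valid since $a$ is proper), and descent along $\GK \times \Fl \to \GK \times^{\GO} \Fl$. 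The equivariance bookkeeping you flag is indeed the only remaining verification, and it is routine since the whole Cartesian diagram and all base-change isomorphisms are compatible with the relevant $K$- and $\GO$-actions.
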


\section{Spherical vs. Iwahori--Whittaker}
\label{sec:spherical-IW}

\subsection{Equivariant perverse sheaves on \texorpdfstring{$\Gr$}{Gr}}
\label{ss:categories-perv}


For $\lambda \in \bX^+$, we will denote by
\[
\cJ_!(\lambda,\bk):= \pH^0 \bigl( (j_\lambda)_! \underline{\bk}_{\Gr^\lambda}[\langle \lambda, 2\rho \rangle] \bigr), \quad \text{resp.} \quad \cJ_*(\lambda,\bk):= \pH^0 \bigl( (j_\lambda)_* \underline{\bk}_{\Gr^\lambda}[\langle \lambda, 2\rho \rangle] \bigr),
\]
the standard, resp.~costandard, $G_\scO$-equivariant perverse sheaf on $\Gr$ associated with $\lambda$.
We will also denote by $\cJ_{!*}(\lambda,\bk)$ the image of any generator of the free rank-$1$ $\bk$-module
\[
\Hom_{\Perv_{G_\scO}(\Gr, \bk)}(\cJ_!(\lambda,\bk), \cJ_*(\lambda,\bk)).
\]
If $\bk$ is a field then $\cJ_{!*}(\lambda,\bk)$ is a simple perverse sheaf, which is both the head of $\cJ_!(\lambda,\bk)$ and the socle of $\cJ_*(\lambda,\bk)$.

Recall the notion of highest weight category, whose definition is spelled out e.g.~in~\cite[Definition~7.1]{riche-hab}. (These conditions are obvious extensions of those considered in~\cite[\S 3.2]{bgs}, which were preceded by a related study in~\cite{cps}.)

\begin{lem}
\label{lem:Perv-GO}
Assume that $\bk$ is a field.
The category $\Perv_{G_\scO}(\Gr, \bk)$ is a highest weight category with weight poset $(\bX^\vee_+, \preceq)$, standard objects $\{\cJ_!(\lambda,\bk) : \lambda \in \bX^\vee_+\}$, and costandard objects $\{\cJ_*(\lambda,\bk) : \lambda \in \bX^\vee_+\}$. Moreover, if $\mathrm{char}(\bk)=0$ then this category is semisimple.
\end{lem}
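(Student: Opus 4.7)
The plan is to identify the local geometric data on each stratum and then invoke a standard criterion producing highest weight structures from stratified situations. First, I would verify that each orbit $\Gr^\lambda$ admits only the trivial $G_\scO$-equivariant rank-one $\bk$-local system. For this I would compute the stabilizer of $L_\lambda$ in $G_\scO$: it is an extension of the standard parabolic $P_\lambda \subset G$, which is connected, by the first congruence subgroup $\ker(G_\scO \to G)$, which is pro-unipotent and hence connected. The whole stabilizer is therefore connected, its component group is trivial, and indecomposable $G_\scO$-equivariant $\bk$-local systems on $\Gr^\lambda$ (which are indexed by characters of this component group) reduce to the constant sheaf $\underline{\bk}_{\Gr^\lambda}$.

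Combined with the decomposition $\Gr = \bigsqcup_{\lambda \in \bX^\vee_+} \Gr^\lambda$ into smooth finite-type strata of dimension $\langle\lambda,2\rho\rangle$, and the closure order matching $\preceq$ recalled in \S\ref{ss:Gr-Fl}, this is precisely the input required by the standard criterion for highest weight structures on categories of stratified perverse sheaves (cf.~\cite[\S 3.2]{bgs} or~\cite[Definition~7.1]{riche-hab}). Invoking this criterion, the standard, resp.\ costandard, object attached to $\lambda$ is the perverse cohomology $\pH^0$ of the $!$-, resp.\ $*$-, extension of $\underline{\bk}_{\Gr^\lambda}[\langle \lambda, 2\rho \rangle]$, which matches $\cJ_!(\lambda,\bk)$ and $\cJ_*(\lambda,\bk)$ by construction. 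The finite-length axiom reduces to the observation that $\overline{\Gr^\lambda}$ contains only finitely many strata for each $\lambda$, which follows from the closure relation.

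For the last assertion in characteristic zero, I would argue by purity via the decomposition theorem. Fixing a proper convolution (Bott--Samelson--Demazure) resolution $\widetilde{\Gr}{}^\lambda \to \overline{\Gr^\lambda}$ associated with a reduced expression for the minimal element dominating $\lambda$, the pushforward of a shifted constant sheaf is pure, and $\cJ_{!*}(\lambda,\bk)$ appears as a direct summand by the decomposition theorem, hence is itself pure (of weight $0$ after suitable Tate twist). Since extensions between pure perverse sheaves of the same weight vanish, we deduce $\Ext^1(\cJ_{!*}(\lambda,\bk), \cJ_{!*}(\mu,\bk)) = 0$ for all $\lambda,\mu \in \bX^\vee_+$, and combined with the length-finite structure established above, the category is semisimple.

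The main obstacle will be the non-finite-type nature of $\Gr$: honest projective or injective objects do not exist in $\Perv_{G_\scO}(\Gr,\bk)$, and one must work with pro-objects obtained as inverse limits over closed finite unions of strata when spelling out the full list of highest weight axioms. This point is well-documented in the references above, and I would import rather than rederive it.
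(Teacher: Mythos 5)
Your reduction to a ``standard criterion'' is where the real content of the lemma is hiding, and as stated the step fails. Connectedness of the stabilizers (hence triviality of the equivariant local systems), smoothness of the strata, and the closure order are \emph{not} sufficient input for a highest weight structure: \cite[Definition~7.1]{riche-hab} is only the definition, and the geometric criterion of \cite[\S 3.3]{bgs} requires the strata to be affine spaces, so that $j_{\lambda!}$ and $j_{\lambda*}$ of perverse sheaves are already perverse. Here $\Gr^\lambda$ is an affine bundle over $G/P_\lambda$, not an affine space, the inclusions are not affine morphisms, and the standard/costandard objects are genuinely perverse truncations. Consequently the substantive axioms --- above all $\Ext^1_{\Perv_{\GO}(\Gr,\bk)}(\cJ_!(\lambda,\bk),\cJ_*(\mu,\bk))=0$, and the computation of $\Hom(\cJ_!(\lambda,\bk),\cJ_*(\mu,\bk))$ --- do not follow formally from your list; they require the specific geometry of $\Gr$: the constancy mod~$2$ of $\dim\Gr^\lambda=\langle\lambda,2\rho\rangle$ on each connected component, the vanishing of odd cohomology of $G/P_\lambda$, and parity-type vanishing for the stalks of the (co)standard objects. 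This is exactly what the paper outsources to \cite[Proposition~10.1(b)]{mv} and \cite[Proposition~12.4]{br}; your proposal never engages with it.

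The characteristic-zero part also rests on a false principle: it is not true that ``extensions between pure perverse sheaves of the same weight vanish'' once one passes to the algebraically closed base field. For instance $\underline{\bk}_{\Gm}[1]$ is simple and pure, yet $\Ext^1_{\Perv}(\underline{\bk}_{\Gm}[1],\underline{\bk}_{\Gm}[1])=\mathsf{H}^1(\Gm,\bk)\neq 0$ (unipotent-monodromy local systems). Purity of $\cJ_{!*}(\lambda,\bk)$ obtained from a Demazure-type resolution must be supplemented by \emph{parity} information: the fibers of those resolutions admit pavings by affine spaces, so the stalks of the pushforward, hence of the summand $\cJ_{!*}(\lambda,\bk)$, are concentrated in degrees congruent to $\dim\Gr^\lambda$ mod~$2$; combined with the parity of orbit dimensions on each component this kills $\Hom_{\Db_c(\Gr,\bk)}(\cJ_{!*}(\lambda,\bk),\cJ_{!*}(\mu,\bk)[1])$, which is how \cite[Proposition~1]{gaitsgory} and \cite[\S 4]{br} argue (compare also the paper's own proof of Corollary~\ref{cor:Perv-IW} via \cite[Corollary~2.8]{jmw}). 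By contrast, the paper proves the lemma simply by citing these references, so if you intend a self-contained argument you must actually establish the parity/Ext-vanishing statements above rather than appeal to purity or to a general stratification criterion.
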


\begin{proof}
The first claim is an easy consequence of~\cite[Proposition~10.1(b)]{mv}; see~\cite[Proposition~1.12.4]{br} for details.
If $\mathrm{char}(\bk)=0$, the semisimplicity of the category $\Perv_{G_\scO}(\Gr, \bk)$ is well known: see~\cite[Proposition~1]{gaitsgory} (or~\cite[\S 1.4]{br} for an expanded version).
\end{proof}

\begin{rmk}\phantomsection
\label{rmk:Hom-J!-J*}
\begin{enumerate}
\item
\label{it:semisimple-J*-J!}
If $\bk$ is a field of characteristic $0$, the semisimplicity of the category $\Perv_{G_\scO}(\Gr, \bk)$ implies in particular that the natural maps $\cJ_!(\lambda,\bk) \to \cJ_{!*}(\lambda,\bk) \to \cJ_*(\lambda,\bk)$ are isomorphisms.
\item
\label{it:Hom-J!-J*}
 For any coefficients $\bk$, we have
 \[
  \Hom_{\Db \Perv_{G_\scO}(\Gr,\bk)} \bigl( \cJ_!(\lambda,\bk), \cJ_*(\mu,\bk)[n] \bigr)=
  \begin{cases}
   \bk & \text{if $n=0$ and $\lambda=\mu$;} \\
    0 & \text{otherwise.}
\end{cases}
 \]
In fact, to prove this it suffices to prove the similar claim for perverse sheaves on $Z$, where $Z \subset \Gr$ is any closed finite union of $G_\scO$-orbits containing $\Gr^\lambda$ and $\Gr^\mu$. In the case $\bk$ is a field, this claim is a consequence of Lemma~\ref{lem:Perv-GO} (or rather its version for $Z$). The case when $\bk$ is the ring of integers in a finite extension of $\Ql$ follows. Indeed, since the category $\Perv_{G_\scO}(Z,\bk)$ has enough projectives we can consider the complex of $\bk$-modules
\[
 M=R\Hom_{\Perv_{G_\scO}(Z,\bk)}(\cJ_!(\lambda,\bk), \cJ_*(\mu,\bk)).
\]
If $\bk_0$ is the residue field of $\bk$, it is not difficult (using the results of~\cite[\S 8 and \S 10]{mv}, and in particular the fact that $\bk_0 \lotimes_\bk \cJ_?(\lambda,\bk) \cong \cJ_?(\lambda,\bk_0)$ for $? \in \{!,*\}$, see~\cite[Proposition~8.1]{mv}) to check that
\[
 \bk_0 \lotimes_\bk M \cong R\Hom_{\Perv_{G_\scO}(Z,\bk_0)}(\cJ_!(\lambda,\bk_0), \cJ_*(\mu,\bk_0)).
\]
We deduce that the left-hand side is isomorphic to $\bk_0$ in the derived category of $\bk_0$-vector spaces; this implies that $M$ is isomorphic to $\bk$ in the derived category of $\bk$-modules.
\end{enumerate}
\end{rmk}

In Section~\ref{sec:applications} we will also encounter some $I^-_\uu$-equivariant perverse sheaves on $\Gr$, where $I^-_\uu$ is the pro-unipotent radical of $I^-$. In particular, we have
\[
\Gr = \bigsqcup_{\mu \in \bX^\vee} I_\uu^- \cdot L_\mu,
\]
and we will denote by $\Delta^\Gr_\mu(\bk)$, resp.~$\nabla^\Gr_\mu(\bk)$, the standard, resp.~costandard, perverse sheaf associated with $\mu$, i.e.~the $!$-direct image (resp.~$*$-direct image) of the constant perverse sheaf of rank $1$ on $I^-_\uu \cdot L_\mu$. (These objects are perverse sheaves thanks to~\cite[Corollaire~4.1.3]{bbd}, because the orbits $I_\uu^- \cdot L_\mu$ are affine spaces.)

\subsection{Category of Iwahori--Whittaker perverse sheaves}
\label{ss:def-IW}

We now denote by 
$I^+ \subset G_\scO$ the Iwahori subgroup associated with $B^+$. We also denote by $I^+_\uu$ the pro-unipotent radical of $I^+$, i.e.~the inverse image of $U^+$ under the map $I^+ \to B^+$.

We assume that there exists a primitive $p$-th root of unity in $\bk$, and fix one. This choice determines a
character $\psi$ of the prime subfield of $\F$ (with values in $\bk^\times$), and we denote by $\cL^\bk_\psi$ the corresponding Artin--Schreier local system on $\Ga$. (Below, some arguments using Verdier duality will also involve the Artin--Schreier local system $\cL^\bk_{-\psi}$ associated with the character $\psi^{-1}$; clearly these two versions play similar roles.) We also consider the ``generic'' character $\chi : U^+ \to \Ga$ 
defined as the composition
\[
 U^+ \twoheadrightarrow U^+ / [U^+,U^+] \xleftarrow[\sim]{\prod_{\alpha} u_\alpha} \prod_{\alpha \in \Delta_{\mathrm{s}}} \Ga \xrightarrow{+} \Ga,
\]
and denote by $\chi_{I^+}$ its composition with the projection $I^+_\uu \twoheadrightarrow U^+$. We can then define the ``Iwahori--Whittaker'' derived category
\[
\Db_{\IW}(\Gr, \bk)
\]
as the $(I^+_\uu, \chi_{I^+}^*(\cL^\bk_\psi))$-equivariant constructible derived category of $\bk$-sheaves on $\Gr$ (see e.g.~\cite[Appendix~A]{modrap1} for a review of the construction of this category). This category admits a perverse t-structure, whose heart will be denoted $\Perv_{\IW}(\Gr,\bk)$, and moreover the ``realization functor''
\[
\Db \Perv_{\IW}(\Gr,\bk) \to \Db_{\IW}(\Gr, \bk)
\]
is an equivalence of triangulated categories.

Note that any ring of coefficients considered above appears in an $\ell$-modular triple $(\K,\O,\L)$ where $\K$ is a finite extension of $\Ql$, $\O$ is its ring of integers, and $\L$ is the residue field of $\O$. In this setting the embedding $\O \hookrightarrow \K$ and the projection $\O \to \L$ induce bijections between the $p$-th roots of unity in $\O$, $\K$ and $\L$. Therefore, choosing a primitive root in any of these rings provides primitive roots in all three rings, and we can then consider extension of scalars functors
\[
 \K \lotimes_\O (-) : \Db_\IW(\Gr,\O) \to \Db_\IW(\Gr,\K), \quad \L \lotimes_\O (-) : \Db_\IW(\Gr,\O) \to \Db_\IW(\Gr,\L).
\]
These functors will play a crucial role in our arguments below. (Here the first functor is t-exact, but the second one is only right t-exact.)

For $\lambda \in \bX^\vee$ we set
\[
X_\lambda := I^+ \cdot L_\lambda.
\]
Then again we have
\[
\Gr = \bigsqcup_{\lambda \in \bX^\vee} X_\lambda.
\]

\begin{lem}
\label{lem:orbits-IW}
The orbit $X_\lambda$ supports an $(I^+_\uu, \chi_{I^+}^*(\cL^\bk_\psi))$-equivariant local system iff $\lambda \in \bX^\vee_{++}$.
\end{lem}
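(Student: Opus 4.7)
Since $I^+ = T \cdot I^+_\uu$ and the torus $T$ fixes every point $L_\lambda$ (as $z^\lambda$ normalizes $T_\scO \subset \GO$), we have $X_\lambda = I^+_\uu \cdot L_\lambda$. Set $S_\lambda := \mathrm{Stab}_{I^+_\uu}(L_\lambda) = I^+_\uu \cap z^\lambda \GO z^{-\lambda}$, so that $X_\lambda \cong I^+_\uu / S_\lambda$. An $(I^+_\uu, \chi_{I^+}^*(\cL^\bk_\psi))$-equivariant local system on $X_\lambda$ amounts to a $\bk$-module with an action of $S_\lambda$ compatible with $\chi_{I^+}^*(\cL^\bk_\psi)\vert_{S_\lambda}$, and a nonzero such object exists iff this restricted local system is trivial. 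Since $S_\lambda$ will turn out to be connected and pro-unipotent, and since a morphism of such a group to $\Ga$ pulls back $\cL^\bk_\psi$ to the trivial local system only when the morphism itself is zero (as the Artin--Schreier cover of $\Ga$ is a nontrivial connected étale cover), the question reduces to whether the character $\chi_{I^+}\vert_{S_\lambda}$ is identically zero.

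The plan is then to compute $S_\lambda$ via the root subgroup decomposition of $I^+_\uu$. For each root $\alpha$, the identity $z^{-\lambda} u_\alpha(f(z)) z^\lambda = u_\alpha \bigl( z^{-\langle \alpha, \lambda \rangle} f(z) \bigr)$ shows that $u_\alpha(f) \in S_\lambda$ iff $u_\alpha(f) \in I^+_\uu$ and $f \in z^{\langle \alpha, \lambda \rangle} \scO$. Combining with the conditions defining $I^+_\uu$, namely $f \in \scO$ for $\alpha \in \Delta^+$ and $f \in z\scO$ for $\alpha \in \Delta^-$, one finds that the intersection of $S_\lambda$ with $U_\alpha(\scK)$ equals $U_\alpha(z^{\max(0,\langle \alpha, \lambda \rangle)} \scO)$ for $\alpha \in \Delta^+$ and $U_\alpha(z^{\max(1,\langle \alpha, \lambda \rangle)} \scO)$ for $\alpha \in \Delta^-$.

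Next, I examine the restriction of $\chi_{I^+}$, which factors as $I^+_\uu \twoheadrightarrow U^+ \xrightarrow{\chi} \Ga$, to each of these root subgroups. For $\alpha \in \Delta_{\mathrm{s}}$ the composition is the map $u_\alpha(f) \mapsto f(0)$; for any other positive root the restriction is zero (since $\chi$ kills $[U^+, U^+]$); and for any negative root the restriction is zero (since the reduction mod~$z$ lands in~$1$). A standard fact about pro-unipotent subgroups of loop groups — which can be made precise by filtering $I^+_\uu$ by the ideal $z^n \mathrm{Lie}(\GO) \cap \mathrm{Lie}(I^+_\uu)$ and using that $S_\lambda$ inherits a compatible filtration with associated graded generated by the affine root vectors above — shows that $\chi_{I^+}\vert_{S_\lambda}$ vanishes iff it vanishes on each of the above root subgroup intersections. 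This happens iff for every $\alpha \in \Delta_{\mathrm{s}}$ the exponent $\max(0, \langle \alpha, \lambda \rangle)$ is at least~$1$, i.e., iff $\langle \alpha, \lambda \rangle \geq 1$ for all simple roots~$\alpha$. This is exactly the condition $\lambda \in \bX^\vee_{++}$, which proves the lemma.

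The main obstacle is the last reduction: justifying that the character $\chi_{I^+}\vert_{S_\lambda}$ is trivial iff its restrictions to the various root subgroup intersections with $S_\lambda$ are all trivial. While intuitive from the product decomposition of pro-unipotent groups in loop groups, this relies on identifying $S_\lambda$ as the pro-unipotent group associated to the set of affine roots $\{\alpha + n\delta : \alpha \in \Delta, n \geq n_\alpha(\lambda)\}$ (with appropriate $n_\alpha(\lambda)$) and verifying that any continuous character to $\Ga$ is a sum of characters on the individual affine root subgroups, which it is since $\Ga$ is abelian and the successive quotients of the natural filtration are products of affine root subgroups.
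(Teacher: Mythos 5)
Your argument is essentially correct, but it takes a genuinely different route from the paper. The paper stays finite-dimensional: for $\lambda$ dominant it uses the affine bundle $p_\lambda : \Gr^\lambda \to G/P_\lambda$, which is compatible with the projection $I^+_\uu \twoheadrightarrow U^+$, so that the $I^+_\uu$-orbits in $\Gr^\lambda$ are pullbacks of the $U^+$-orbits in $G/P_\lambda$; the existence of an $(I^+_\uu,\chi_{I^+}^*(\cL^\bk_\psi))$-equivariant local system on $X_\mu$ then reduces to the classical statement that a $U^+$-orbit on $G/P_\lambda$ supports a $(U^+,\chi^*(\cL^\bk_\psi))$-equivariant local system iff it is a free orbit, and such an orbit exists (and is unique, corresponding to $X_\lambda$) exactly when $P_\lambda=B^-$, i.e.\ when $\lambda$ is strictly dominant. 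You instead work entirely inside the loop group, computing the stabilizer $S_\lambda=I^+_\uu\cap z^\lambda \GO z^{-\lambda}$ through affine root subgroups and testing the vanishing of $\chi_{I^+}$ on it; your root-subgroup computations and the criterion "equivariant local system exists iff $\chi_{I^+}|_{S_\lambda}$ is the zero homomorphism'' are correct, and your version has the mild advantage of treating all $\lambda\in\bX^\vee$ uniformly without passing to the dominant representative. The cost is exactly the point you flag: you need the direct-spanning (Iwahori-factorization) property of $S_\lambda$, i.e.\ that it is generated by its intersections with the (affine) root subgroups \emph{together with the toral congruence subgroup} $T_1=\ker(T_\scO\to T)$ --- note that $T_1\subset S_\lambda$, so your description of $S_\lambda$ as a product of affine root subgroups alone is slightly inaccurate, though harmless since $\chi_{I^+}$ kills $T_1$. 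With that standard structural fact (intersections of groups attached to concave functions on the affine root system are again directly spanned) your reduction to the rank-one computations is valid, and the final criterion $\langle\lambda,\alpha\rangle\geq 1$ for all simple $\alpha$ is exactly $\lambda\in\bX^\vee_{++}$. In short: the paper buys brevity by quoting the finite flag-variety statement, while your proof is more explicit and self-contained modulo the direct-spanning lemma.
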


\begin{proof}[Sketch of proof]
Let $\lambda \in \bX^\vee_+$, and consider the affine bundle $\Gr^\lambda \to G/P_\lambda$ (see~\S\ref{ss:Gr-Fl}).
The decomposition of $\Gr^\lambda$ in $I_\uu^+$-orbits is obtained by pullback from the decomposition of $G/P_\lambda$ into $U^+$-orbits; in particular, for $\mu \in \Wf \cdot \lambda$, $X_\mu$ supports an $(I^+_\uu, \chi_{I^+}^*(\cL^\bk_\psi))$-equivariant local system iff its image in $G/P_\lambda$ is a free $U^+$-orbit. If $\lambda \notin \bX^\vee_{++}$ there is no such orbit in $G/P_\lambda$, and if $\lambda \in \bX^\vee_{++}$ there is exactly one, corresponding to $X_\lambda$.
\end{proof}

Note that if $\lambda \in \bX^\vee_{++}$, since $X_\lambda$ is open dense in $\Gr^\lambda$ we have
\begin{equation}
\label{eqn:dim-Xlambda}
\dim(X_\lambda) = \langle \lambda, 2\rho \rangle
\end{equation}
and
\[
X_\lambda \subset \overline{X_\mu} \quad \text{iff} \quad \lambda \preceq \mu.
\]
For $\lambda \in \bX^\vee_{++}$ we will denote by
\[
\Delta^{\IW}_\lambda(\bk), \quad \text{resp.} \quad \nabla^{\IW}_\lambda(\bk),
\]
the standard, resp.~costandard, $(I^+_\uu, \chi_{I^+}^*(\cL^\bk_\psi))$-equivariant perverse sheaf on $\Gr$ associated with $\lambda$, i.e.~the $!$-extension, resp.~$*$-extension, to $\Gr$ of the free rank-$1$ $(I^+_\uu, \chi_{I^+}^*(\cL^\bk_\psi))$-equivariant perverse sheaf on $X_\lambda$. (Once again, these objects are perverse sheaves thanks to~\cite[Corollaire~4.1.3]{bbd}.) We will also denote by $\IC_\lambda^\IW(\bk)$ the image of any generator of the rank-$1$ free $\bk$-module
\[
\Hom_{\Perv_{\IW}(\Gr, \bk)}(\Delta_\lambda^\IW(\bk), \nabla_\lambda^\IW(\bk)).
\]
If $\bk$ is a field then $\IC_\lambda^\IW(\bk)$ is a simple perverse sheaf.

Note that since $\varsigma$ is minimal in $\bX^\vee_{++}$ for $\preceq$, we have
\begin{equation}
\label{eqn:Delta-nabla-sigma}
\Delta^\IW_{\varsigma}(\bk) = \nabla^\IW_\varsigma(\bk) = \IC^\IW_\varsigma(\bk).
\end{equation}

\begin{lem}
\label{lem:stalks-IC-IW}
Assume that $\bk$ is a field of characteristic $0$. Then the $i$-th cohomology of the stalks of $\IC^\IW_\lambda(\bk)$ vanish unless $i \equiv \dim X_\lambda \pmod 2$.
\end{lem}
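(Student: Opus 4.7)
The argument I would use splits according to which stratum $X_\mu$ the stalks of $\IC^\IW_\lambda(\bk)$ live on, and then transfers the classical parity for spherical IC sheaves to the Iwahori--Whittaker side via convolution.

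\emph{Step 1 (irrelevant strata).} For any $\mu \in \bX^\vee \setminus \bX^\vee_{++}$, the stalks of $\IC^\IW_\lambda(\bk)$ at points of $X_\mu$ vanish. Indeed, the restriction $\IC^\IW_\lambda(\bk)|_{X_\mu}$ is an $(I^+_\uu, \chi_{I^+}^*(\cL^\bk_\psi))$-equivariant complex on the single $I^+$-orbit $X_\mu$, hence a direct sum of shifts of $(I^+_\uu, \chi_{I^+}^*(\cL^\bk_\psi))$-equivariant local systems; by Lemma~\ref{lem:orbits-IW} no nonzero such local system exists, so the restriction vanishes. It therefore suffices to control the stalks on strata $X_\mu$ with $\mu \in \bX^\vee_{++}$ and $\mu \preceq \lambda$.

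\emph{Step 2 (parity via convolution).} In characteristic~$0$ the spherical IC sheaf $\cJ_{!*}(\nu,\bk)$ has stalks whose cohomology is concentrated in degrees $\equiv \langle \nu, 2\rho\rangle \pmod 2$ for each $\nu \in \bX^\vee_+$; this is the classical parity statement obtained by applying the decomposition theorem to a Demazure-type resolution of $\overline{\Gr^\nu}$, whose fibers admit a cell decomposition into affine spaces of constant parity. I would then establish the convolution identification
\[
 \IC^\IW_\lambda(\bk) \;\cong\; \IC^\IW_\varsigma(\bk) \star^{G_\scO} \cJ_{!*}(\lambda - \varsigma, \bk)
\]
for $\lambda \in \bX^\vee_{++}$. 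By~\eqref{eqn:Delta-nabla-sigma}, $\IC^\IW_\varsigma(\bk)$ is the rank-one equivariant local system on the smooth affine variety $X_\varsigma$ shifted by $\dim X_\varsigma = \langle \varsigma, 2\rho\rangle$, and convolving with it is realized (via the diagram of~\S\ref{ss:convolution}) as a smooth pullback followed by a proper pushforward; this operation preserves parity of stalks up to the constant shift $\dim X_\varsigma$. Combining with the parity of $\cJ_{!*}(\lambda-\varsigma,\bk)$ and the identity $\dim X_\lambda = \dim X_\varsigma + \langle \lambda - \varsigma, 2\rho\rangle$ yields the desired congruence.

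\emph{Main obstacle and alternative route.} The convolution identification in Step~2 is the delicate step: one must show that $\IC^\IW_\varsigma(\bk) \star^{G_\scO} \cJ_{!*}(\lambda-\varsigma,\bk)$ is perverse (via Lemma~\ref{lem:convolution-exact}), has support $\overline{X_\lambda}$, and restricts to the expected equivariant local system on the open stratum $X_\lambda$, so that $\IC^\IW_\lambda(\bk)$ is a direct summand with any remaining summands having strictly smaller support (which can then be treated inductively). A shortcut that avoids this is to invoke the characteristic-$0$ case of the paper's main theorem, established in~\cite{abbgm}: it identifies $\Perv_\IW(\Gr,\bk)$ with $\Perv_{G_\scO}(\Gr,\bk)$, which is semisimple by Lemma~\ref{lem:Perv-GO}. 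This forces the natural maps $\Delta^\IW_\lambda(\bk) \to \IC^\IW_\lambda(\bk) \to \nabla^\IW_\lambda(\bk)$ to be isomorphisms (\emph{cleanness}), so the stalks of $\IC^\IW_\lambda(\bk)$ vanish outside $X_\lambda$ and, on $X_\lambda$, are concentrated in degree $-\dim X_\lambda$, whose parity is $\dim X_\lambda \pmod 2$; the result is then immediate.
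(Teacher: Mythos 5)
Your Step 1 is fine, and the identification $\IC^\IW_\lambda(\bk) \cong \IC^\IW_\varsigma(\bk) \star^{\GO} \cJ_{!*}(\lambda-\varsigma,\bk)$ is indeed true in characteristic $0$ (it is essentially the char-$0$ case of Theorem~\ref{thm:equiv}), and your decomposition-theorem argument for extracting $\IC^\IW_\lambda(\bk)$ as a direct summand could be made to work. The genuine gap is the assertion that convolution with $\IC^\IW_\varsigma(\bk)$, being ``a smooth pullback followed by a proper pushforward, preserves parity of stalks up to the constant shift $\dim X_\varsigma$.'' Proper pushforward does \emph{not} preserve stalk parity in general: the stalk of $(m^{\Gr})_*(\cF \,\widetilde{\boxtimes}\, \cG)$ at a point is the cohomology of the fiber with coefficients in the restricted twisted product, and knowing that this coefficient complex has parity stalks says nothing a priori about the parity of that cohomology. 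Some extra geometric input is needed, and this is exactly where the paper's proof differs: it pulls back along the smooth map $\pi$ to $\Fl$ and then generates all simple Iwahori--Whittaker perverse sheaves from the one at the base point by convolving on the right with simple $I^-$-equivariant perverse sheaves attached to orbits of dimension $0$ or $1$, so that the relevant convolution maps are points or $\mathbb{P}^1$-bundles and a Springer-type induction controls the stalks. (Equivalently, one could invoke the parity formalism of \cite{jmw}, as in Lemmas~\ref{lem:pi-parity} and~\ref{lem:convolution-parity} later in the paper, but those results are themselves proved by precisely this reduction to $0$- and $1$-dimensional orbits.) So your one-step convolution with the full spherical IC sheaf does not by itself yield the parity statement; you would need to supply this missing mechanism, at which point you have essentially reconstructed the paper's argument.

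Your proposed shortcut is also problematic inside this paper's logical structure. Quoting the char-$0$ equivalence of \cite{abbgm} gives semisimplicity of $\Perv_\IW(\Gr,\bk)$, hence cleanness, hence the lemma --- but in the present paper the implications run the other way: Lemma~\ref{lem:stalks-IC-IW} is used to prove Corollary~\ref{cor:Perv-IW} (semisimplicity in characteristic $0$), which yields cleanness (Remark~\ref{rmk:Hom-J!-J*-IW}), which in turn enters the proof of Theorem~\ref{thm:equiv}. So deducing the lemma from the char-$0$ equivalence is circular within the paper's own development, and is acceptable only as an appeal to the independent proof in \cite{abbgm}, which defeats the purpose of giving a self-contained argument here.
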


\begin{proof}[Sketch of proof]
Since the morphism $\pi$ is smooth, by standard properties of perverse sheaves (see e.g.~\cite[\S 4.2.6]{bbd}) it suffices to prove a similar statement on $\Fl$ instead of $\Gr$. Now the Decomposition Theorem implies that all simple $(I_\uu^+, \chi_{I^+}^*(\cL^\bk_\psi))$-equivariant perverse sheaves on $\Fl$ can be obtained from the one corresponding to the orbit of the base point by convolving on the right with $I^-$-equivariant simple perverse sheaves on $\Fl$ corresponding to orbits of dimension either $0$ or $1$. Standard arguments (going back at least to~\cite{springer}) show that these operations preserve the parity-vanishing property of stalks, and the claim follows.
\end{proof}

\begin{rmk}
\label{rmk:parity}
Assume that $\bk$ is a field.
Following~\cite{jmw}\footnote{In~\cite{jmw} the authors consider the setting of ``ordinary'' constructible complexes. However, as observed already in~\cite[\S 11.1]{tilting} or~\cite[\S 6.2]{mkdkm}, their considerations apply verbatim in our Iwahori--Whittaker setting.} we will say that an object of $\Db_{\IW}(\Gr, \bk)$ is \emph{even}, resp.~\emph{odd}, if its restriction and corestriction to each stratum is concentrated in even, resp.~odd, degrees, and that it is \emph{parity} if it is isomorphic to a direct sum $\mathcal{F} \oplus \mathcal{F}'$ with $\mathcal{F}$ even and $\mathcal{F}'$ odd. Using this language, Lemma~\ref{lem:stalks-IC-IW} states that if $\mathrm{char}(\bk)=0$ then the objects $\IC^{\IW}_\lambda(\bk)$ are parity, of the same parity as $\dim(X_\lambda)$.
\end{rmk}


\begin{cor}
\label{cor:Perv-IW}
Assume that $\bk$ is a field.
The category $\Perv_{\IW}(\Gr, \bk)$ is a highest weight category with weight poset $(\bX^\vee_{++}, \preceq)$, standard objects $\{\Delta_\lambda^\IW(\bk) : \lambda \in \bX^\vee_{++}\}$, and costandard objects $\{\nabla^\IW_\lambda(\bk) : \lambda \in \bX^\vee_{++}\}$. Moreover, if $\mathrm{char}(\bk)=0$ then this category is semisimple.
\end{cor}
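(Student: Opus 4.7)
The plan is to follow the model of Lemma~\ref{lem:Perv-GO} and~\cite[Proposition~12.4]{br}, verifying the axioms of a highest weight category from~\cite[Definition~7.1]{riche-hab}, and then to deduce semisimplicity in characteristic zero from the parity result of Lemma~\ref{lem:stalks-IC-IW}. The first step I would record is that each orbit $X_\lambda$ with $\lambda \in \bX^\vee_{++}$ is an affine space: by the proof of Lemma~\ref{lem:orbits-IW}, $X_\lambda$ is an affine bundle over the unique free $U^+$-orbit in $G/P_\lambda$, and that orbit is isomorphic to $U^+$. In particular $H^n(X_\lambda, \bk) = \delta_{n,0} \cdot \bk$.

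The crucial input is the orthogonality relation
\[
\Hom_{\Db_\IW(\Gr,\bk)}\bigl( \Delta^\IW_\lambda(\bk), \nabla^\IW_\mu(\bk)[n] \bigr) \cong \begin{cases} \bk & \text{if } \lambda = \mu \text{ and } n = 0, \\ 0 & \text{otherwise,} \end{cases}
\]
for $\lambda, \mu \in \bX^\vee_{++}$. Letting $i_\lambda \colon X_\lambda \hookrightarrow \Gr$ denote the stratum inclusion, I would use the $((i_\lambda)_!, i_\lambda^!)$-adjunction to rewrite the left-hand side as a $\Hom$ over $X_\lambda$ involving $i_\lambda^! \nabla^\IW_\mu$. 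For $\lambda = \mu$ the identity $i_\lambda^!(i_\lambda)_* = \id$ for locally closed immersions reduces the expression to $H^\bullet(X_\lambda, \bk)$, handled by the previous step (the equivariant rank-one twists cancel against their inverses). For $\lambda \neq \mu$ I would argue by induction on the closure relation: if $\lambda \not\preceq \mu$ the two orbits are disjoint and the costalks $i_\lambda^! \nabla^\IW_\mu$ vanish; otherwise $\lambda \prec \mu$, and a distinguished triangle built from the stratification of $\overline{X_\mu}$ together with the cosupport condition for the perverse sheaf $\nabla^\IW_\mu$ reduces the vanishing to the inductive hypothesis.

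With the orthogonality in hand, the remaining highest weight axioms are verified by the standard machinery for stratified perverse sheaves: $\IC^\IW_\lambda(\bk)$ arises as the head of $\Delta^\IW_\lambda(\bk)$ and the socle of $\nabla^\IW_\lambda(\bk)$, these are precisely the simple objects of $\Perv_\IW(\Gr,\bk)$, and the composition factors of $\ker(\Delta^\IW_\lambda \twoheadrightarrow \IC^\IW_\lambda)$ and $\mathrm{coker}(\IC^\IW_\lambda \hookrightarrow \nabla^\IW_\lambda)$ are all of the form $\IC^\IW_\mu$ with $\mu \prec \lambda$, as follows from the characterization of intermediate extensions by the support/cosupport conditions.

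For the semisimplicity assertion when $\mathrm{char}(\bk) = 0$, I would invoke Lemma~\ref{lem:stalks-IC-IW}, which states that each $\IC^\IW_\lambda(\bk)$ is a parity complex. I expect this last step to be the main obstacle: parity by itself does not force $\Ext^1_{\Perv_\IW}(\IC^\IW_\lambda, \IC^\IW_\mu) = 0$, so the upgrade to semisimplicity requires passing from parity to purity. As in the spherical analogue~\cite[Proposition~1]{gaitsgory}, this is obtained through Deligne's weight formalism together with the Decomposition Theorem of~\cite{bbd}: the parity restriction on stalks rules out Frobenius eigenvalues of unwanted weights, so the $\IC^\IW_\lambda$ are pure, and pure perverse sheaves have vanishing $\Ext^1$ between them, giving the desired semisimplicity.
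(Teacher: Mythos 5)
Your construction of the highest weight structure is fine, and it is essentially what the paper has in mind when it says the first claim is ``easy, as in \cite[\S 3.3]{bgs}'' (in fact the orthogonality is even more direct than your induction: for $\lambda \neq \mu$, adjunction gives $\Hom((i_\lambda)_! A,(i_\mu)_* B) \cong \Hom(i_\mu^*(i_\lambda)_! A,B)=0$ because $(i_\lambda)_! A$ is an extension by zero, so no d\'evissage along closures is needed). The genuine problem is your semisimplicity step. The principle you invoke --- ``pure perverse sheaves have vanishing $\Ext^1$ between them'' --- is false over the algebraically closed base field: for an elliptic curve $X$ the shifted constant sheaf $\underline{\Ql}_X[1]$ is a simple pure perverse sheaf, yet $\Hom_{\Db_c(X,\Ql)}\bigl(\underline{\Ql}_X[1],\underline{\Ql}_X[1][1]\bigr)\cong \mathsf{H}^1(X,\Ql)\neq 0$. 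Weight arguments \`a la \cite{bbd} control Hom groups in the mixed category over a finite field (and even there $\Ext^1$ between pure objects of equal weight need not vanish --- think of a Jordan block for Frobenius on a point), and the Decomposition Theorem gives semisimplicity of pushforwards of pure complexes, not the $\Ext^1$-vanishing between simples in the abelian category over $\overline{\mathbb{F}}_p$ that semisimplicity requires. So, as written, this step does not go through.

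Moreover, the assertion you use to motivate the detour --- ``parity by itself does not force $\Ext^1=0$'' --- is exactly backwards in this situation, and the fact that it does is the paper's whole proof. The relevant strata $X_\lambda$, $\lambda\in\bX^\vee_{++}$, are affine spaces carrying rank-one Whittaker local systems, so the even/odd formalism of \cite{jmw} applies verbatim here (see Remark~\ref{rmk:parity}). The one extra observation the paper makes, and which is absent from your argument, is that within each connected component of $\Gr$ the dimensions of the $X_\lambda$ have constant parity, by~\eqref{eqn:dim-Xlambda}. Hence if $X_\lambda$ and $X_\mu$ lie in the same component, Lemma~\ref{lem:stalks-IC-IW} shows that $\IC^\IW_\lambda(\bk)$ and $\IC^\IW_\mu(\bk)$ are parity objects of the \emph{same} parity, so $\IC^\IW_\mu(\bk)[1]$ has the opposite parity and \cite[Corollary~2.8]{jmw} gives $\Hom_{\Db_\IW(\Gr,\bk)}(\IC^\IW_\lambda(\bk),\IC^\IW_\mu(\bk)[1])=0$; for different components the vanishing is trivial. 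No purity, weights or Decomposition Theorem are needed (and the spherical analogue in \cite{gaitsgory} and \cite[\S 4]{br} likewise rests on parity of stalks and evenness of the cohomology of the strata, not on a purity-implies-$\Ext$-vanishing principle). To repair your write-up, replace the last paragraph by this parity argument, making sure to record the constant-parity-of-dimensions observation.
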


\begin{proof}
The first claim is standard, as e.g.~in~\cite[\S 3.3]{bgs}. For the second claim, we observe that the orbits $X_\lambda$ (for $\lambda \in \bX^\vee_{++}$) have dimensions of constant parity on each connected component of $\Gr$, see~\eqref{eqn:dim-Xlambda}. 
Using this and Lemma~\ref{lem:stalks-IC-IW},
the semisimplicity can be proved exactly as in the case of the category $\Perv_{G_\scO}(\Gr, \bk)$. Namely, we have to prove that
\[
\Ext^1_{\Perv_{\IW}(\Gr, \bk)}(\IC^\IW_\lambda(\bk),\IC^\IW_\mu(\bk))=\Hom_{\Db_{\IW}(\Gr, \bk)}(\IC^\IW_\lambda(\bk),\IC^\IW_\mu(\bk)[1])
\]
vanishes for any $\lambda,\mu$. If $X_\lambda$ and $X_\mu$ belong to different connected components of $\Gr$ then this claim is obvious; otherwise $\IC^\IW_\lambda(\bk)$ and $\IC^\IW_\mu(\bk)$ are either both even or both odd (see Remark~\ref{rmk:parity}), so that the desired vanishing follows from~\cite[Corollary~2.8]{jmw}.
\end{proof}

\begin{rmk}\phantomsection
\label{rmk:Hom-J!-J*-IW}
\begin{enumerate}
\item
\label{it:J!-J*-IW-simple}
Once Corollary~\ref{cor:Perv-IW} is known, one can refine Lemma~\ref{lem:stalks-IC-IW} drastically: if $\bk$ is a field of characteristic $0$, then the simple perverse sheaves $\IC^{\IW}_\lambda(\bk)$ are clean, in the sense that if $i_\mu : X_\mu \to \Gr$ is the embedding, for any $\mu \neq \lambda$ we have
\begin{equation}
\label{eqn:IC-IW-clean}
i_\mu^* \bigl( \IC^{\IW}_\lambda(\bk) \bigr) = i_\mu^! \bigl( \IC^{\IW}_\lambda(\bk) \bigr)=0.
\end{equation}
In fact, as in Remark~\ref{rmk:Hom-J!-J*}\eqref{it:semisimple-J*-J!}, the semisimplicity claim in Corollary~\ref{cor:Perv-IW} implies that the natural maps
\[
 \Delta^{\IW}_\lambda(\bk) \to \IC^{\IW}_\lambda(\bk) \to \nabla^{\IW}_\lambda(\bk)
\]
are isomorphisms, which is equivalent to~\eqref{eqn:IC-IW-clean}.
(See also~\cite[Corollary~2.2.3]{abbgm} for a different proof of~\eqref{eqn:IC-IW-clean}.)
This observation can be used to give a new proof of the main result of~\cite{fgv}, hence of the geometric Casselman--Shalika formula.

\item
\label{it:Hom-J!-J*-IW}
The same arguments as in Remark~\ref{rmk:Hom-J!-J*}\eqref{it:Hom-J!-J*} show that for any coefficients $\bk$, any $\lambda,\mu \in \bX^\vee_{++}$ and any $n \in \Z$ we have
 \[
  \Hom_{\Db \Perv_{\IW}(\Gr, \bk)} \left( \Delta_\lambda^\IW(\bk), \nabla_\mu^\IW(\bk)[n] \right) =
  \begin{cases}
   \bk & \text{if $\lambda=\mu$ and $n=0$;} \\
   0 & \text{otherwise.}
  \end{cases}
 \]
 (In this case, the existence of enough projectives in $\Perv_{\IW}(Z,\bk)$ can be checked using the techniques of~\cite[\S 2]{rsw}.)
\end{enumerate}
\end{rmk}

\subsection{Statement}
\label{ss:equivalence}

We consider the functor
\[
\Phi : \Db_{G_\scO}(\Gr, \bk) \to \Db_{\IW}(\Gr, \bk)
\]
defined by
\[
\Phi(\cF) = \Delta^\IW_\varsigma(\bk) \star^{\GO} \cF.
\]
In view of~\eqref{eqn:Delta-nabla-sigma} (or, alternatively, arguing as in~\cite{bbrm}), in this definition $
\Delta^\IW_{\varsigma}(\bk)$ can be replaced by $\nabla^\IW_\varsigma(\bk)$ or $ \IC^\IW_\varsigma(\bk)$; in particular this shows that the conjugate of $\Phi$ by Verdier duality is the similar functor using the character $\psi^{-1}$ instead of $\psi$. 

\begin{lem}
\label{lem:Phi-exact}
 The functor $\Phi$ is t-exact for the perverse t-structures.
\end{lem}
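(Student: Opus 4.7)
The plan is to show that $\Phi$ takes the heart $\Perv_{G_\scO}(\Gr,\bk)$ into the heart $\Perv_{\IW}(\Gr, \bk)$; this will suffice, because $\Phi$ is a triangulated functor (convolution with a fixed object on the left is defined as the pushforward of a twisted external product), and any triangulated functor sending heart to heart between bounded t-structures is automatically t-exact. Indeed, for a bounded t-structure the nonpositive (respectively, nonnegative) part is the smallest full subcategory that is closed under extensions and contains $\mathcal{A}[k]$ for $k \geq 0$ (respectively, $k \leq 0$), where $\mathcal{A}$ denotes the heart, so the claim follows from routine formal manipulations with distinguished triangles.

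To verify heart-to-heart preservation, fix $\cF \in \Perv_{G_\scO}(\Gr, \bk)$. After forgetting the IW-equivariance structure, $\Delta^\IW_\varsigma(\bk)$ is simply a perverse sheaf on $\Gr$, so Lemma~\ref{lem:convolution-exact}, applied with $\Delta^\IW_\varsigma(\bk)$ (perverse) and $\cF$ ($G_\scO$-equivariant perverse), yields that $\Phi(\cF) = \Delta^\IW_\varsigma(\bk) \star^{\GO} \cF$ is perverse on $\Gr$. The general formalism of convolution also shows that the $(I^+_\uu, \chi_{I^+}^*(\cL^\bk_\psi))$-equivariant structure on the left factor $\Delta^\IW_\varsigma(\bk)$ is inherited by the product; combined with perversity, this places $\Phi(\cF)$ in $\Perv_{\IW}(\Gr, \bk)$, as needed.

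The one subtle point to address is that Lemma~\ref{lem:convolution-exact} is formally stated only for $\bk$ a field, whereas Lemma~\ref{lem:Phi-exact} should hold in the full coefficient generality of the paper. This is not a real obstacle: the nearby cycles argument used to prove Lemma~\ref{lem:convolution-exact} applies verbatim when $\bk$ is a ring of integers. Alternatively, in the integral case one may reduce to the residue field and fraction field by extension of scalars, using that $\Phi$ commutes with the corresponding derived base change functors (visible from the convolution formula~\eqref{eqn:formula-convolution}) and that perversity can be detected after these base changes.
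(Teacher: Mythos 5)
Your reduction to heart-to-heart preservation and your treatment of the field case are fine, and agree with the paper: for $\bk$ a field, $\Phi(\cF)=\Delta^\IW_\varsigma(\bk)\star^{\GO}\cF$ is perverse by Lemma~\ref{lem:convolution-exact}, applied after forgetting the Whittaker equivariance. The genuine gap is in the integral case. Your first proposed fix --- that the nearby-cycles proof of Lemma~\ref{lem:convolution-exact} ``applies verbatim'' when $\bk=\O$ --- is not correct, because that lemma is actually \emph{false} over $\O$: the nearby-cycles argument starts from the twisted external product $\cF \, \widetilde{\boxtimes} \, \cG$, and over $\O$ the derived external product of two perverse sheaves need not be perverse, since torsion in the stalks produces $\mathrm{Tor}_1^\O$-terms in negative perverse degrees (already for two torsion skyscrapers the convolution has $\pH^{-1}\neq 0$). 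This failure is precisely why Mirkovi\'c--Vilonen define the integral monoidal product as $\pH^0(\cF\star^{\GO}\cG)$, as recalled in the remark following~\eqref{eqn:formula-convolution}. So t-exactness of $\Phi$ over $\O$ cannot be obtained by simply upgrading Lemma~\ref{lem:convolution-exact}; it is a special feature of convolving with $\Delta^\IW_\varsigma(\O)$, whose stalks are free.

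Your alternative fix is the right route (and the one the paper takes), but as stated it is too loose to close the argument: it is \emph{not} true that perversity over $\O$ is detected by asking the base changes to be perverse, since for a perverse $\O$-sheaf $\mathcal{M}$ the reduction $\bk_0\lotimes_\O \mathcal{M}$ (with $\bk_0$ the residue field) is in general only in perverse degrees $[-1,0]$. The correct two-step argument is: since $\K\otimes_\O\Phi(\cF)\cong\Phi(\K\otimes_\O\cF)$ is perverse by the field case, every $\pH^i(\Phi(\cF))$ with $i\neq 0$ is torsion; since $\bk_0\lotimes_\O\cF$ lies in perverse degrees $[-1,0]$, so does $\bk_0\lotimes_\O\Phi(\cF)\cong\Phi(\bk_0\lotimes_\O\cF)$, again by the field case. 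Now if $i>0$ were maximal with $\pH^i(\Phi(\cF))\neq 0$, then $\pH^i(\bk_0\lotimes_\O\Phi(\cF))\neq 0$ (right exactness of reduction plus a Nakayama-type argument), contradicting the amplitude $[-1,0]$; and if $i<0$ were minimal with $\pH^i(\Phi(\cF))\neq 0$, this object is torsion, so its $\mathrm{Tor}_1^\O$ contributes $\pH^{i-1}(\bk_0\lotimes_\O\Phi(\cF))\neq 0$ with $i-1<-1$, again a contradiction. You should spell this out (or at least state the precise detection criterion: $\mathcal{M}$ is perverse if $\K\otimes_\O\mathcal{M}$ is perverse and $\bk_0\lotimes_\O\mathcal{M}$ has perverse amplitude $[-1,0]$) rather than appeal to base change detecting perversity.
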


\begin{proof}
 In the case where $\bk$ is a field, the claim follows from Lemma~\ref{lem:convolution-exact}. The general case follows using an $\ell$-modular triple $(\K,\O,\L)$ as in~\S\ref{ss:def-IW} with $\bk=\O$, and the associated extension of scalars functors. Namely, 
 if $\cF$ is in $\Perv_{\GO}(\Gr,\O)$, then
 \[
 \K \lotimes_\O \Phi(\cF) \cong \Phi(\K \lotimes_\O \cF)
 \]
 is perverse; hence any perverse cohomology object $\pH^i(\Phi(\cF))$ with $i \neq 0$ is torsion. On the other hand,
 \[
 \L \lotimes_\O \Phi(\cF) \cong \Phi(\L \lotimes_\O \cF)
 \]
 lives in perverse degrees $-1$ and $0$ since $\L \lotimes_\O \cF$ lives in these degrees. If $\pH^i(\Phi(\cF))$ were nonzero for some $i>0$, then taking $i$ maximal with this property we would obtain that $\pH^i( \L \lotimes_\O \Phi(\cF)) \neq 0$, a contradiction. On the other hand, if $\pH^i(\Phi(\cF))$ was nonzero for some $i<0$, then taking $i$ minimal with this property we would obtain that $\pH^{i-1}( \L \lotimes_\O \Phi(\cF)) \neq 0$, a contradiction again.
 \end{proof}

We will denote by
\[
 \Phi^0 : \Perv_{G_\scO}(\Gr, \bk) \to \Perv_{\IW}(\Gr, \bk)
\]
the restriction of $\Phi$ to the hearts of the perverse t-structures, so that $\Phi^0$ is an exact functor between abelian categories.
The main result of this section is the following theorem, whose proof will be given in the next subsection.

\begin{thm}
\label{thm:equiv}
The functor $\Phi^0$ is an equivalence of categories.
\end{thm}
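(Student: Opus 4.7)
The plan is to identify $\Phi^0$ explicitly on standard and costandard objects, and then invoke a recognition principle for equivalences of highest weight categories. Observe first that $\lambda \mapsto \lambda + \varsigma$ is an order-preserving bijection $\bX^\vee_+ \simto \bX^\vee_{++}$, so that the weight posets of the highest weight structures on $\Perv_{\GO}(\Gr,\bk)$ and $\Perv_\IW(\Gr,\bk)$ match up (when $\bk$ is a field; the statement in Theorem \ref{thm:equiv} for $\bk$ a ring of integers will then be reduced to the field cases).

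The principal geometric step is to establish the isomorphisms
\[
 \Phi^0(\cJ_!(\lambda,\bk)) \cong \Delta^\IW_{\lambda+\varsigma}(\bk), \qquad \Phi^0(\cJ_*(\lambda,\bk)) \cong \nabla^\IW_{\lambda+\varsigma}(\bk),
\]
and that these are natural with respect to the canonical morphisms $\cJ_!(\lambda,\bk) \to \cJ_*(\lambda,\bk)$ and $\Delta^\IW_{\lambda+\varsigma}(\bk) \to \nabla^\IW_{\lambda+\varsigma}(\bk)$. The core input is a support computation for the convolution diagram of \S\ref{ss:convolution}: the object $\Delta^\IW_\varsigma(\bk) \star^{\GO} \cJ_!(\lambda,\bk)$ is supported in $\overline{X_{\lambda+\varsigma}}$, and the dimension identity $\dim X_{\lambda + \varsigma} = \langle \lambda + \varsigma, 2\rho \rangle = \dim X_\varsigma + \dim \Gr^\lambda$ suggests that the convolution morphism $m^\Gr$ restricts to an isomorphism above the open stratum $X_{\lambda+\varsigma}$. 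Tracking the $(I^+_\uu, \chi_{I^+}^*(\cL^\bk_\psi))$-equivariant rank-one local system through this identification yields the $!$-standard statement; the costandard statement follows by Verdier duality, using the remark noted just before Lemma \ref{lem:Phi-exact} that conjugating $\Phi$ by $\mathbb{D}$ amounts to interchanging $\psi$ and $\psi^{-1}$, producing a functor of the same nature.

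Next, I would check that $\Phi$ induces an isomorphism on Hom spaces between standards and costandards. By Remarks \ref{rmk:Hom-J!-J*}\eqref{it:Hom-J!-J*} and \ref{rmk:Hom-J!-J*-IW}\eqref{it:Hom-J!-J*-IW}, both $\Hom^\bullet_{\Db \Perv_{\GO}(\Gr,\bk)}(\cJ_!(\lambda,\bk), \cJ_*(\mu,\bk))$ and $\Hom^\bullet_{\Db \Perv_\IW(\Gr,\bk)}(\Delta^\IW_{\lambda+\varsigma}(\bk), \nabla^\IW_{\mu+\varsigma}(\bk))$ are free of rank one over $\bk$, concentrated in degree zero, when $\lambda = \mu$, and vanish otherwise. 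The map induced by $\Phi$ is thus either an isomorphism or zero, and non-vanishing for $\lambda = \mu$ follows from the naturality in the previous step: $\Phi$ sends a non-zero morphism $\cJ_!(\lambda,\bk) \to \cJ_*(\lambda,\bk)$ to a non-zero morphism $\Delta^\IW_{\lambda+\varsigma}(\bk) \to \nabla^\IW_{\lambda+\varsigma}(\bk)$.

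Finally, I would conclude by invoking a standard recognition principle: when $\bk$ is a field, an exact functor between highest weight categories that restricts to a bijection (respecting the poset isomorphism) on standards and on costandards, and induces isomorphisms on the relevant $\Hom$ and $\Ext^1$ spaces between them, is automatically an equivalence, by a dévissage argument using the standard filtrations on both sides. For a general coefficient ring $\bk$ (such as the ring of integers in a finite extension of $\Ql$), one reduces to the residue-field and fraction-field cases via extension of scalars, combined with a Nakayama-type argument. The main obstacle is the geometric computation in the second paragraph: carefully tracking the Iwahori--Whittaker equivariance data through the convolution diagram and identifying the resulting rank-one local system on $X_{\lambda+\varsigma}$. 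The remaining steps are relatively formal consequences of the highest weight structure together with the $\Hom$ computations recalled in the Remarks.
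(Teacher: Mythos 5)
There is a genuine gap, and it sits exactly at the point you yourself flag as ``the main obstacle.'' The support computation and the dimension identity only show that $\Phi^0(\cJ_!(\lambda,\bk))$ is supported on $\overline{X_{\lambda+\varsigma}}$ and that its restriction to the open stratum $X_{\lambda+\varsigma}$ is a rank-one perversely shifted local system; tracking the $(I^+_\uu,\chi_{I^+}^*(\cL^\bk_\psi))$-equivariance through the convolution diagram gives nothing more. This produces only a canonical morphism $f^\bk_\lambda : \Delta^\IW_{\lambda+\varsigma}(\bk) \to \Phi^0(\cJ_!(\lambda,\bk))$ which is an isomorphism over $X_{\lambda+\varsigma}$; it does \emph{not} show that $f^\bk_\lambda$ is an isomorphism, because a priori $\Phi^0(\cJ_!(\lambda,\bk))$ may have extra composition factors (in particular simple quotients $\IC^\IW_{\mu+\varsigma}(\bk)$ with $\mu \prec \lambda$) coming from the boundary, over which the convolution morphism has positive-dimensional fibres. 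In characteristic $0$ this is rescued by cleanness of $\Delta^\IW_\varsigma(\bk)=\IC^\IW_\varsigma(\bk)$ (Remark~\ref{rmk:Hom-J!-J*-IW}\eqref{it:J!-J*-IW-simple}), but for $\bk=\F$ or $\O$ there is no such argument, and this is precisely where the paper's non-formal input enters: Proposition~\ref{prop:vanishing-Hom-Phi}, i.e.\ the vanishing of $\Hom(\Phi^0(\cJ_!(\lambda,\bk)),\nabla^\IW_{\mu+\varsigma}(\bk))$ for $\mu\neq\lambda$, which rules out the unwanted quotients and hence gives surjectivity of $f^\bk_\lambda$. That proposition is proved by rewriting $\Phi$ via induction/forgetting along the stabilizer of $L_\varsigma$, base change and Verdier duality, and ultimately rests on the geometric Casselman--Shalika formula in characteristic $0$ together with a reduction over the $\ell$-modular triple $(\K,\O,\F)$ (Lemma~\ref{lem:vanishing-CS}, using freeness of the top compactly supported cohomology); injectivity of $f^{\O}_\lambda$ then uses simplicity of $\Delta^\IW_{\lambda+\varsigma}(\K)$ plus a torsion argument, and the costandard case follows by Verdier duality. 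None of this machinery appears in your proposal, and without it the claimed isomorphisms $\Phi^0(\cJ_!(\lambda,\bk))\cong\Delta^\IW_{\lambda+\varsigma}(\bk)$, $\Phi^0(\cJ_*(\lambda,\bk))\cong\nabla^\IW_{\lambda+\varsigma}(\bk)$ are unproved in positive characteristic.

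Once that identification is granted, the rest of your outline does essentially agree with the paper's endgame: one checks that $\Phi^0$ induces isomorphisms on $\Hom$ (and all higher $\Ext$, which vanish by Remark~\ref{rmk:Hom-J!-J*}\eqref{it:Hom-J!-J*} and Remark~\ref{rmk:Hom-J!-J*-IW}\eqref{it:Hom-J!-J*-IW}) between standards and costandards, and concludes by a Beilinson-type d\'evissage at the level of $\Db\Perv$, using that these categories are generated by the standard (resp.\ costandard) objects. Two smaller caveats: the matching should be checked on all $\Ext^n$, not just $\Hom$ and $\Ext^1$ (harmless here, since the higher groups vanish on both sides), and for $\bk=\O$ the paper does not reduce the equivalence itself to the field cases by a Nakayama-type argument; it proves the $\Hom$-computations integrally and runs the same derived-category argument over $\O$, the modular triple being used instead inside the proof that $f^\O_\lambda$ is an isomorphism. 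Your sketch of the $\O$ case would need to be made precise on this point.
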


\subsection{Proof of Theorem~\ref{thm:equiv}}

As explained in~\S\ref{ss:def-IW}, any ring of coefficients considered above appears in an $\ell$-modular triple $(\K,\O,\L)$ where $\K$ is a finite extension of $\Ql$, $\O$ is its ring of integers, and $\L$ is the residue field of $\O$. Therefore we fix such a triple, and will treat the three cases in parallel.

The starting point of our proof will be the \emph{geometric Casselman--Shalika formula}, first conjectured in~\cite{fgkv} and then proved independently in~\cite{fgv} and~\cite{ngo-polo} (see also Remark~\ref{rmk:Hom-J!-J*-IW}\eqref{it:J!-J*-IW-simple}). We consider the composition
\[
 \chi_{U^+_\scK} : U^+_\scK \xrightarrow{\chi_{\scK}} (\Ga)_{\scK} \to \Ga,
\]
where the second map is the ``residue'' morphism defined by
\[
 \sum_{i \in \Z} f_i z^i \mapsto f_{-1}.
\]
For $\mu \in \bX^+$ we set $S_\mu := U^+_\mathscr{K} \cdot L_\mu$; then there exists a unique function $\chi_\mu : S_\mu \to \Ga$ such that $\chi_\mu(u \cdot L_\mu)=\chi_{U^+_\scK}(u)$ for any $u \in U^+_\mathscr{K}$. The geometric Casselman--Shalika formula states that for $\lambda,\mu \in \bX^\vee_+$ we have
\begin{equation}
\label{eqn:CasselmanShalika}
\mathsf{H}^i_c \bigl( S_\mu, \cJ_{!*}(\lambda,\K)_{|S_\mu} \otimes_\K \chi_\mu^*(\cL_\psi^\K) \bigr) = \begin{cases}
\K & \text{if $\lambda=\mu$ and $i=\langle 2\rho,\lambda\rangle$;} \\
0 & \text{otherwise.}
\end{cases}
\end{equation}

In the following lemma, we denote by $\chi_\mu' : z^{-\varsigma} X_{\mu+\varsigma} \to \Ga$ the unique function such that $\chi_\mu'(z^{-\varsigma} \cdot u \cdot L_{\mu+\varsigma}) = \chi_{I^+}(u)$ for $u \in I_\uu^+$.

\begin{lem}
\label{lem:vanishing-CS}
For $\bk \in \{\K,\O,\L\}$, for any $\lambda,\mu \in \bX^+$ with $\lambda \neq \mu$ we have
\[
\mathsf{H}^{\langle \lambda+\mu,2\rho \rangle}_c \bigl( \Gr^\lambda \cap (z^{-\varsigma} X_{\mu+\varsigma}), (\chi_\mu')^*(\cL^\bk_\psi)_{|\Gr^\lambda \cap (z^{-\varsigma} X_{\mu+\varsigma})}) = 0.
\]
\end{lem}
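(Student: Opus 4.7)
The plan is to compare the finite-dimensional intersection $Z := \Gr^\lambda \cap z^{-\varsigma} X_{\mu+\varsigma}$ with the better-understood semi-infinite intersection $W := \Gr^\lambda \cap S_\mu$, and then to invoke the geometric Casselman--Shalika formula~\eqref{eqn:CasselmanShalika}. The first task is to establish the inclusion $z^{-\varsigma} X_{\mu+\varsigma} \subset S_\mu$ together with the compatibility $\chi_\mu|_{z^{-\varsigma} X_{\mu+\varsigma}} = \chi_\mu'$. I would use the Iwahori decomposition $I^+_\uu = U^+_\scO \cdot T_\uu \cdot U^-_{z\scO}$, where $T_\uu \subset T_\scO$ is the pro-unipotent radical and $U^-_{z\scO}$ denotes the subgroup of $U^-_\scO$ reducing to $1$ modulo $z$. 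Strict dominance of $\mu+\varsigma$ forces $\langle \mu+\varsigma, -\alpha \rangle \geq 1$ for every negative root $-\alpha$, so $z^{-(\mu+\varsigma)} U^-_{z\scO} z^{\mu+\varsigma} \subset U^-_\scO \subset \GO$; hence $T_\uu \cdot U^-_{z\scO}$ stabilises $L_{\mu+\varsigma}$ and $X_{\mu+\varsigma} = U^+_\scO \cdot L_{\mu+\varsigma} \subset U^+_\scK \cdot L_{\mu+\varsigma} = S_{\mu+\varsigma}$. Translating by $z^{-\varsigma}$, which normalises $U^+_\scK$ and carries $L_{\mu+\varsigma}$ to $L_\mu$, yields the inclusion, and a short computation on simple-root coordinates, using $\langle \varsigma, \alpha \rangle = 1$ for simple $\alpha$, gives the character identification.

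Since $\dim W = \langle \rho, \lambda+\mu \rangle$ by Mirkovi\'c--Vilonen, the target degree $\langle \lambda+\mu, 2\rho \rangle = 2 \dim W$ is twice the top dimension, so the statement concerns top compactly supported cohomology. For any coefficient ring $\bk$ and any variety $Y$ of pure dimension $d$ equipped with a function $\chi : Y \to \Ga$, the top cohomology $\mathsf{H}^{2d}_c(Y, \chi^* \cL^\bk_\psi)$ is a free $\bk$-module of rank equal to the number of top-dimensional irreducible components of $Y$ on which $\chi$ is constant---a geometric invariant independent of $\bk$ (use $\mathsf{H}^*_c(\Ga, \cL^\bk_\psi) = 0$ together with a Leray argument along $\chi$). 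So it suffices to work with $\bk = \K$, and the lemma reduces to showing that no top-dimensional irreducible component of $Z$ has $\chi_\mu'$ constant. Each such component sits as a dense open inside an irreducible component (an MV cycle) of $W$, so $\chi_\mu' = \chi_\mu|_Z$ is constant on it if and only if $\chi_\mu$ is constant on the enclosing MV cycle.

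It therefore remains to show that no MV cycle of $W$ carries a constant $\chi_\mu$ when $\lambda \neq \mu$, equivalently $\mathsf{H}^{\langle \lambda+\mu, 2\rho \rangle}_c(W, \chi_\mu^* \cL^\K_\psi) = 0$. My plan is to apply the distinguished triangle on $\overline{\Gr^\lambda}$,
\[
(j_\lambda)_! \underline{\K}[\langle \lambda, 2\rho \rangle] \to \cJ_{!*}(\lambda, \K) \to \cone \xrightarrow{+1},
\]
restrict to $S_\mu$, tensor with $\chi_\mu^* \cL_\psi$, and take $\mathsf{H}^*_c$. The middle term vanishes identically for $\lambda \neq \mu$ by~\eqref{eqn:CasselmanShalika}, while the strict-support estimate $(i_\nu)^* \cJ_{!*}(\lambda, \K) \in {}^p D^{\leq -1}(\Gr^\nu)$ for $\nu \prec \lambda$, combined with $\dim(\Gr^\nu \cap S_\mu) = \langle \rho, \nu+\mu \rangle$, bounds the contribution of $\cone$. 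The delicate step will be the final bookkeeping: the strict-support bound alone yields vanishing of the cone's compactly supported cohomology (of the twisted restriction) only in degrees $\geq \langle 2\rho, \mu \rangle$, leaving degree $\langle 2\rho, \mu \rangle - 1$ a priori nonzero, which via the connecting map could produce a nontrivial class in top degree of the $!$-extension term; resolving this will likely require iterating the triangle along the stratification of $\overline{\Gr^\lambda} \setminus \Gr^\lambda$, or invoking the finer cleanness form of Casselman--Shalika from~\cite{fgv, ngo-polo}.
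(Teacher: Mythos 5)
Your first two steps are sound and essentially coincide with the paper's own reductions: the inclusion $z^{-\varsigma}X_{\mu+\varsigma}\subset S_\mu$ together with $\chi_\mu'=\chi_\mu|_{z^{-\varsigma}X_{\mu+\varsigma}}$ (the paper gets this from the Ng\^o--Polo parametrization of $X_{\mu+\varsigma}$ by the groups $U_{\alpha,j}$, you from the Iwahori decomposition; note the sign slip --- what you need is $\langle\mu+\varsigma,\alpha\rangle\geq 1$ for $\alpha\in\Delta^+$, which makes $z^{-(\mu+\varsigma)}U^-_{z\scO}z^{\mu+\varsigma}\subset \GO$), and the passage to $\bk=\K$ and to the ambient intersection $\Gr^\lambda\cap S_\mu$ via top-degree compactly supported cohomology and irreducible components (your ``components on which $\chi$ is constant'' should read ``components on which $\chi^*\cL_\psi$ is trivial'', but this does not affect the logic, since in the end everything is deduced from the vanishing over $\K$ on $\Gr^\lambda\cap S_\mu$).

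The genuine gap is in the last step, exactly where you flag it, and your two suggested escape routes do not obviously close it. The estimate $i_\nu^*\cJ_{!*}(\lambda,\K)\in {}^p D^{\leq -1}(\Gr^\nu)$ on boundary strata is short by precisely one degree: it leaves $\mathsf{H}^{\langle 2\rho,\mu\rangle-1}_c$ of the twisted restriction of your cone uncontrolled, so the connecting map can a priori hit the top-degree class you want to kill; and iterating the triangle over the strata of $\overline{\Gr^\lambda}\smallsetminus\Gr^\lambda$ cannot help, because the $\leq -1$ bound is already a per-stratum bound. The paper closes this by different bookkeeping. First, it maps to $\cJ_!(\lambda,\K)=\pH^0\bigl((j_\lambda)_!\underline{\K}_{\Gr^\lambda}[\langle\lambda,2\rho\rangle]\bigr)$ rather than directly to $\cJ_{!*}(\lambda,\K)$, so the cone is $\bigl({}^p\tau_{\leq-1}(j_\lambda)_!\underline{\K}_{\Gr^\lambda}[\langle\lambda,2\rho\rangle]\bigr)[1]$, which lies in perverse degrees $\leq -2$ --- one degree better than your cone estimate. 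Second, it uses the consequence of \eqref{eqn:CasselmanShalika} that for \emph{every} object of $\Perv_{\GO}(\Gr,\K)$ the twisted compactly supported cohomology along $S_\mu$ is concentrated in the single degree $\langle 2\rho,\mu\rangle$ (d\'evissage to the simple constituents). Together these kill the cone's contribution in both degrees $\langle 2\rho,\mu\rangle$ and $\langle 2\rho,\mu\rangle-1$, so the map in degree $\langle2\rho,\mu\rangle$ is an isomorphism, and the target vanishes for $\lambda\neq\mu$ since $\cJ_!(\lambda,\K)\cong\cJ_{!*}(\lambda,\K)$ by characteristic-$0$ semisimplicity and \eqref{eqn:CasselmanShalika}. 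If you insist on keeping your triangle, the missing degree can instead be recovered from parity vanishing of the stalks of the spherical intersection cohomology sheaves on $\Gr$ in characteristic $0$ (which sharpens $\leq-1$ to $\leq-2$ on each stratum), but that is an extra input which the paper's route avoids.
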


\begin{proof}
For $\alpha \in \Delta$ and $n \in \Z_{\geq 0}$ we denote by $U_{\alpha,n} \subset \GO$ the image of the morphism $x \mapsto u_\alpha(xz^n)$.
As explained e.g.~in~\cite[Lemme~2.2]{ngo-polo}, the action on $L_{\mu+\varsigma}$ induces an isomorphism
\[
\prod_{\alpha \in \Delta^+} \prod_{j=0}^{\langle \mu+\varsigma, \alpha \rangle -1} U_{\alpha,j} \simto X_{\mu + \varsigma}.
\]
Multiplying by $z^{-\varsigma}$ we deduce that $z^{-\varsigma} X_{\mu+\varsigma} \subset S_\mu$, and moreover that $\chi_\mu'$ is the restriction of $\chi_\mu$ to $z^{-\varsigma} X_{\mu+\varsigma}$. By~\cite[Theorem~3.2]{mv}, we have $\dim(\Gr^\lambda \cap S_\mu)=\langle \lambda+\mu, \rho \rangle$; it follows that $\dim(\Gr^\lambda \cap (z^{-\varsigma} X_{\mu+\varsigma})) \leq \langle \lambda+\mu, \rho \rangle$. If this inequality is strict, then our vanishing claim is obvious (see e.g.~\cite[Theorem~I.8.8]{fk}). Otherwise, each irreducible component of $\Gr^\lambda \cap (z^{-\varsigma} X_{\mu+\varsigma})$ of dimension $\langle \lambda+\mu, \rho \rangle$ is dense (hence open) in an irreducible component of $\Gr^\lambda \cap S_\mu$; therefore to prove the lemma it suffices to prove that
\begin{equation}
\label{eqn:vanishing-CasselmanShalika}
\mathsf{H}^{\langle \lambda+\mu,2\rho \rangle}_c \bigl( \Gr^\lambda \cap S_\mu, \chi_\mu^*(\cL^\bk_\psi)_{|\Gr^\lambda \cap S_\mu}) = 0.
\end{equation}
Finally, we note that since we are considering the top cohomology, the $\O$-module $\mathsf{H}^{\langle \lambda+\mu,2\rho \rangle}_c \bigl( \Gr^\lambda \cap S_\mu, \chi_\mu^*(\cL^\O_\psi)_{|\Gr^\lambda \cap S_\mu})$ is free, and the natural morphisms
\begin{align*}
\K \otimes_\O \mathsf{H}^{\langle \lambda+\mu,2\rho \rangle}_c \bigl( \Gr^\lambda \cap S_\mu, \chi_\mu^*(\cL^\O_\psi)_{|\Gr^\lambda \cap S_\mu}) &\to \mathsf{H}^{\langle \lambda+\mu,2\rho \rangle}_c \bigl( \Gr^\lambda \cap S_\mu, \chi_\mu^*(\cL^\K_\psi)_{|\Gr^\lambda \cap S_\mu}), \\
\L \otimes_\O \mathsf{H}^{\langle \lambda+\mu,2\rho \rangle}_c \bigl( \Gr^\lambda \cap S_\mu, \chi_\mu^*(\cL^\O_\psi)_{|\Gr^\lambda \cap S_\mu}) &\to \mathsf{H}^{\langle \lambda+\mu,2\rho \rangle}_c \bigl( \Gr^\lambda \cap S_\mu, \chi_\mu^*(\cL^\L_\psi)_{|\Gr^\lambda \cap S_\mu})
\end{align*}
are isomorphisms; hence it suffices to prove~\eqref{eqn:vanishing-CasselmanShalika} in case $\bk=\K$.

So, from now on we assume that $\bk=\K$. The geometric Casselman--Shalika formula~\eqref{eqn:CasselmanShalika} implies 
that for any $\cF$ in $\Perv_{\GO}(\Gr,\K)$ we have $\mathsf{H}^i_c(S_\mu, \mathcal{F}_{|S_\mu} \otimes_\K \chi_\mu^*(\cL_\psi^\K))=0$ for $i \neq \langle 2\rho,\mu \rangle$; therefore the morphism $(j_\lambda)_! \underline{\K}_{\Gr^\lambda}[\langle \lambda, 2\rho \rangle] \to \cJ_!(\lambda,\K)$ induces an isomorphism
\[
\mathsf{H}^{\langle \lambda+\mu,2\rho \rangle}_c \Bigl( S_\mu, \bigl((j_\lambda)_! \underline{\K}_{\Gr^\lambda} \bigr)_{|S_\mu} \otimes_\K \chi_\mu^*(\cL_\psi^\K) \Bigr) \simto \mathsf{H}^{\langle \mu,2\rho \rangle}_c \bigl( S_\mu, \cJ_!(\lambda,\K) \otimes_\K \chi_\mu^*(\cL_\psi^\K) \bigr).
\]
Now we have $\cJ_!(\lambda,\K) \cong \cJ_{!*}(\lambda,\K)$ by Remark~\ref{rmk:Hom-J!-J*}\eqref{it:semisimple-J*-J!}; hence the right-hand side vanishes if $\lambda \neq \mu$ by~\eqref{eqn:CasselmanShalika}. On the other hand, the base change theorem shows that the left-hand side identifies with $\mathsf{H}^{\langle \lambda+\mu,2\rho \rangle}_c \bigl( \Gr^\lambda \cap S_\mu, \chi_\mu^*(\cL^\K_\psi)_{|\Gr^\lambda \cap S_\mu})$; we have therefore proved~\eqref{eqn:vanishing-CasselmanShalika} in this case, hence the lemma.
\end{proof}

\begin{prop}
\label{prop:vanishing-Hom-Phi}
For $\bk \in \{\K,\L\}$, for any $\lambda,\mu \in \bX^+$ with $\lambda \neq \mu$ we have
\[
\Hom_{\Perv_{\IW}(\Gr,\bk)} \bigl( \Phi^0(\cJ_!(\lambda,\bk)), \nabla^{\IW}_{\mu+\varsigma}(\bk) \bigr)=0.
\]
\end{prop}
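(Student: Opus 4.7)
The plan is to reduce the vanishing to the cohomological statement of Lemma~\ref{lem:vanishing-CS}. The first move will be to upgrade the perverse $\Hom$ to a derived $\Hom$: since $\Phi$ is t-exact (Lemma~\ref{lem:Phi-exact}), and writing $\widetilde{\cJ}_!(\lambda) := (j_\lambda)_! \underline{\bk}_{\Gr^\lambda}[\langle \lambda, 2\rho \rangle]$, the cone of the natural map $\widetilde{\cJ}_!(\lambda) \to \cJ_!(\lambda,\bk) = \pH^0(\widetilde{\cJ}_!(\lambda))$ lies in strictly negative perverse degrees, and so does the cone of $\Delta^\IW_\varsigma(\bk) \star^{\GO} \widetilde{\cJ}_!(\lambda) \to \Phi^0(\cJ_!(\lambda,\bk))$; since $\nabla^\IW_{\mu+\varsigma}(\bk)$ is perverse, $\Hom$'s into it do not see this cone, and it will suffice to prove
\[
 \Hom_{\Db_{\IW}(\Gr,\bk)}\bigl( \Delta^\IW_\varsigma(\bk) \star^{\GO} \widetilde{\cJ}_!(\lambda),\ \nabla^\IW_{\mu+\varsigma}(\bk) \bigr) = 0.
\]

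Next I will apply adjunction: writing $i := i_{\mu+\varsigma}: X_{\mu+\varsigma} \hookrightarrow \Gr$ and $\cL := (\chi'_\mu)^* \cL^\bk_\psi$, we have $\nabla^\IW_{\mu+\varsigma}(\bk) \cong i_*(\cL[\langle \mu+\varsigma, 2\rho \rangle])$, so the above $\Hom$ identifies with
\[
 \Hom_{\Db(X_{\mu+\varsigma},\bk)}\bigl( i^*(\Delta^\IW_\varsigma(\bk) \star^{\GO} \widetilde{\cJ}_!(\lambda)),\ \cL[\langle\mu+\varsigma, 2\rho\rangle] \bigr).
\]
I will then compute the $*$-restriction using proper base change along the ind-proper morphism $m^\Gr$: for $y \in X_{\mu+\varsigma}$, the stalk of $\Delta^\IW_\varsigma \star^{\GO} \widetilde{\cJ}_!(\lambda)$ at $y$ is $R\Gamma_c$ over the intersection of the fiber $(m^\Gr)^{-1}(y) \cong \Gr$ (parametrized by $g\GO$ via $[g, g^{-1}y]$) with the support of the twisted product $\Delta^\IW_\varsigma \, \widetilde{\boxtimes} \, \widetilde{\cJ}_!(\lambda)$, which is contained in $X_\varsigma \, \widetilde{\boxtimes} \, \Gr^\lambda$.

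The geometric crux will be to match this stalk with a cohomology group on $\Gr^\lambda \cap (z^{-\varsigma} X_{\mu+\varsigma})$, so as to apply Lemma~\ref{lem:vanishing-CS}. Parametrizing $X_\varsigma$ by $u \in I^+_\uu$ via $g = u z^\varsigma$, the relevant locus in the fiber is $\{u : z^{-\varsigma}u^{-1} y \in \Gr^\lambda\}$; by $I^+$-equivariance it suffices to treat $y = L_{\mu+\varsigma}$, and then the change of variables $u \mapsto z^{-\varsigma}u^{-1}L_{\mu+\varsigma}$ sends this locus to $\Gr^\lambda \cap (z^{-\varsigma} X_{\mu+\varsigma})$. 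The dominance of $\mu$ yields the containment $\mathrm{Stab}_{I^+}(L_{\mu+\varsigma}) \subseteq \mathrm{Stab}_{I^+}(L_\varsigma)$ (checked on root subgroups using $\langle \mu+\varsigma, \alpha \rangle \geq \langle \varsigma, \alpha \rangle$ for $\alpha \in \Delta^+$), giving a natural projection $z^{-\varsigma} X_{\mu+\varsigma} \twoheadrightarrow X_\varsigma$ compatible with this change of variables; and, as in the proof of Lemma~\ref{lem:vanishing-CS}, the Artin--Schreier local system $\chi_{I^+}^*\cL^\bk_\psi$ on $X_\varsigma$ pulls back to $(\chi'_\mu)^*\cL^\bk_\psi$.

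Assembling the shifts from the two factors in the external product and from the target $\cL[\langle\mu+\varsigma, 2\rho\rangle]$, the $\Hom$ identifies with $\mathsf{H}^{\langle\lambda+\mu, 2\rho\rangle}_c(\Gr^\lambda \cap (z^{-\varsigma} X_{\mu+\varsigma}),\ (\chi'_\mu)^*\cL^\bk_\psi)$, which vanishes for $\lambda \neq \mu$ by Lemma~\ref{lem:vanishing-CS}. The main obstacle will be making the geometric identification of the previous paragraph precise: the naive map $X_\varsigma \to \Gr$, $g\GO \mapsto g^{-1} L_{\mu+\varsigma}$, is not well-defined on the quotient $\Gr$ (only modulo the $\GO$-action on the target), and reconciling this with the Artin--Schreier twist requires careful use of the $\GO$-equivariance of $\widetilde{\cJ}_!(\lambda)$ encoded in the twisted product $\widetilde{\boxtimes}$.
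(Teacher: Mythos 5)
Your opening reduction (replacing $\cJ_!(\lambda,\bk)$ by $(j_\lambda)_!\underline{\bk}_{\Gr^\lambda}[\langle\lambda,2\rho\rangle]$ using the t-exactness of $\Phi$) and your final target (Lemma~\ref{lem:vanishing-CS}) coincide with the paper's. But the middle of your argument has a genuine gap, precisely at the point you yourself flag as ``the main obstacle,'' and it is not a mere bookkeeping issue. After base change along $m^{\Gr}$, the locus that contributes to the stalk at $y=L_{\mu+\varsigma}$ is
$Z=\{g\GO\in X_\varsigma : g^{-1}L_{\mu+\varsigma}\in\Gr^\lambda\}$, i.e.\ the image in $X_\varsigma \cong I^+_\uu/\mathrm{Stab}_{I^+_\uu}(L_\varsigma)$ of $\widetilde{Z}=\{u\in I^+_\uu : z^{-\varsigma}u^{-1}L_{\mu+\varsigma}\in\Gr^\lambda\}$. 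The map $u\mapsto z^{-\varsigma}u^{-1}L_{\mu+\varsigma}$ does \emph{not} descend to $Z$: it factors through the quotient of $\widetilde{Z}$ by \emph{left} multiplication by $\mathrm{Stab}_{I^+_\uu}(L_{\mu+\varsigma})$, whereas $Z$ is the quotient by \emph{right} multiplication by $\mathrm{Stab}_{I^+_\uu}(L_\varsigma)$. These are two different finite-dimensional quotients of the same pro-variety, in general of different dimensions, so there is no ``change of variables'' identifying $Z$ with $\Gr^\lambda\cap(z^{-\varsigma}X_{\mu+\varsigma})$; the asserted containment of stabilizers and the projection $z^{-\varsigma}X_{\mu+\varsigma}\twoheadrightarrow X_\varsigma$ do not repair this, since that projection compares left cosets while your locus involves inversion. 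Concretely, the degrees do not match either: assembling the shifts honestly, the Hom you are computing is the compactly supported cohomology of $Z$ (with its character twist) in degree $\langle\lambda-\mu,2\rho\rangle$, not the degree-$\langle\lambda+\mu,2\rho\rangle$ cohomology of $\Gr^\lambda\cap(z^{-\varsigma}X_{\mu+\varsigma})$; the discrepancy $2\langle\mu,2\rho\rangle$ is exactly the relative dimension between the two quotients, and closing the gap would require an explicit correspondence (an affine-bundle argument between the two quotients) that your proposal does not supply. A smaller point: passing from $\Hom$ into $\nabla^\IW_{\mu+\varsigma}(\bk)$ to a stalk computation at the single point $L_{\mu+\varsigma}$ also needs the equivariant descent on the free orbit to be spelled out, not just ``$I^+$-equivariance.''

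The paper sidesteps the fiber computation entirely, and this is where its proof differs from your plan: it writes $\Phi$ as $\lInd_{J}^{(I_\uu^+,\chi_{I^+})}\circ\mathsf{F}_\varsigma\circ\For^{\GO}_{z^{-\varsigma}Jz^{\varsigma}}[-\langle\varsigma,2\rho\rangle]$, where $J$ is the stabilizer of $L_\varsigma$ in $I^+_\uu$ and $\mathsf{F}_\varsigma$ is translation by $z^\varsigma$, and then uses the adjunction $(\lInd_J,\For)$ together with the full faithfulness of $\Db_J(\Gr,\bk)\to\Db_c(\Gr,\bk)$ to convert the Hom into
$\Hom_{\Db_c(\Gr,\bk)}\bigl((j_\lambda)_!\underline{\bk}_{\Gr^\lambda}[\langle\lambda,2\rho\rangle],\,\mathsf{F}_\varsigma^{-1}(\nabla^\IW_{\mu+\varsigma}(\bk))[\langle\varsigma,2\rho\rangle]\bigr)$.
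Base change and Verdier duality on the smooth variety $z^{-\varsigma}X_{\mu+\varsigma}$ then produce exactly $\mathsf{H}_c^{\langle\lambda+\mu,2\rho\rangle}\bigl(\Gr^\lambda\cap(z^{-\varsigma}X_{\mu+\varsigma}),(\chi'_\mu)^*\cL^\bk_{-\psi}\bigr)$, so the shift $[\langle\varsigma,2\rho\rangle]$ that your approach is missing is generated automatically by the adjunction. If you want to salvage your route, you should either prove the affine-bundle correspondence between the two quotients of $\widetilde{Z}$ (tracking the character and the dimension shift $\langle\mu,2\rho\rangle$), or simply adopt the induction-and-adjunction formulation of $\Phi$.
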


\begin{proof}
First, by exactness of $\Phi$ we see that the morphism $(j_\lambda)_! \underline{\bk}_{\Gr^\lambda}[\langle \lambda, 2\rho \rangle] \to \cJ_!(\lambda,\bk)$ induces an isomorphism
\begin{multline*}
\Hom_{\Db_{\IW}(\Gr,\bk)} \bigl( \Phi((j_\lambda)_! \underline{\bk}_{\Gr^\lambda}[\langle \lambda, 2\rho \rangle]), \nabla^{\IW}_{\mu+\varsigma}(\bk) \bigr) \\
\simto \Hom_{\Perv_{\IW}(\Gr,\bk)} \bigl( \Phi^0(\cJ_!(\lambda,\bk)), \nabla^{\IW}_{\mu+\varsigma}(\bk) \bigr).
\end{multline*}

Let $J$ be the stabilizer of the point $L_\varsigma$ in $I_\uu^+$. Then $\chi_{I^+}^*(\cL_\psi^\bk)$ is trivial on $J$, so that we have a forgetful functor
\[
\Db_{\IW}(\Gr,\bk) \to \Db_J(\Gr,\bk).
\]
By the same considerations as in~\S\ref{ss:categories}, this functor admits a left adjoint, denoted $\lInd_{J}^{(I_\uu^+,\chi_{I^+})}$. Moreover, since $J$ is pro-unipotent the forgetful functor $\Db_J(\Gr,\bk) \to \Db_c(\Gr,\bk)$ is fully faithful. 

If we denote by $\mathsf{F}_{\varsigma}$ the direct image under the automorphism $x \mapsto z^{\varsigma} \cdot x$ of $\Gr$, then from the definition we see that
\begin{equation}
\label{eqn:Phi-Ind-For}
\Phi(\cF) = \lInd_{J}^{(I_\uu^+,\chi_{I^+})} \circ \mathsf{F}_{\varsigma} \circ \For^{\GO}_{z^{-\varsigma} J z^{\varsigma}}(\cF)[-\langle \varsigma,2\rho \rangle]
\end{equation}
for any $\cF$ in $\Db_{\GO}(\Gr,\bk)$. In the setting of the proposition, we deduce an isomorphism
\begin{multline*}
\Hom_{\Db_{\IW}(\Gr,\bk)} \bigl( \Phi((j_\lambda)_! \underline{\bk}_{\Gr^\lambda}[\langle \lambda, 2\rho \rangle]), \nabla^{\IW}_{\mu+\varsigma}(\bk) \bigr) \\
\cong \Hom_{\Db_c(\Gr,\bk)}\bigl( (j_\lambda)_! \underline{\bk}_{\Gr^\lambda}[\langle \lambda, 2\rho \rangle], \mathsf{F}_{\varsigma}^{-1}(\nabla^\IW_{\mu+\varsigma}(\bk)) [\langle \varsigma,2\rho\rangle] \bigr).
\end{multline*}
Now $\mathsf{F}_{\varsigma}^{-1}(\nabla^\IW_{\mu+\varsigma}(\bk))$ identifies with the $*$-pushforward of $(\chi_\mu')^*(\cL^\bk_\psi)[\langle \mu+\varsigma, 2\rho \rangle]$ under the embedding $z^{-\varsigma} X_{\mu+\varsigma} \to \Gr$. Hence, by the base change theorem, the right-hand side identifies with
\[
\mathsf{H}^{\langle \mu+2\varsigma-\lambda,2\rho \rangle}( \Gr^\lambda \cap z^{-\varsigma} X_{\mu+\varsigma}, a^! (\chi_\mu')^*(\cL^\bk_\psi)),
\]
where $a : \Gr^\lambda \cap z^{-\varsigma} X_{\mu+\varsigma} \hookrightarrow z^{-\varsigma} X_{\mu+\varsigma}$ is the embedding.

So, we now need to show that $\mathsf{H}^{\langle \mu+2\varsigma-\lambda,2\rho \rangle}(\Gr^\lambda \cap z^{-\varsigma} X_{\mu+\varsigma}, a^! (\chi_\mu')^*(\cL^\bk_\psi))$ vanishes. If $b$ denotes the unique map $z^{-\varsigma} X_{\mu+\varsigma} \to \mathrm{pt}$, then by Verdier duality we have
\begin{multline*}
 \mathsf{H}^{\langle \mu+2\varsigma-\lambda,2\rho \rangle}( \Gr^\lambda \cap z^{-\varsigma} X_{\mu+\varsigma}, a^! (\chi_\mu')^*(\cL^\bk_\psi))^* = \mathsf{H}^{\langle \mu+2\varsigma-\lambda,2\rho \rangle}( b_* a^! (\chi_\mu')^*(\cL^\bk_\psi))^* \\
 \cong \mathsf{H}^{\langle \lambda-2\varsigma-\mu,2\rho \rangle}( b_! a^* \mathbb{D}_{z^{-\varsigma} X_{\mu+\varsigma}}((\chi_\mu')^*(\cL^\bk_\psi))).
\end{multline*}
Now since $z^{-\varsigma} X_{\mu+\varsigma}$ is smooth of dimension $\langle \mu+\varsigma,2\rho\rangle$ we have an isomorphism $\mathbb{D}_{z^{-\varsigma} X_{\mu+\varsigma}}((\chi_\mu')^*(\cL^\bk_\psi)) \cong (\chi_\mu')^*(\cL^\bk_{-\psi})[2\langle \mu+\varsigma,2\rho\rangle]$, which shows that
\begin{multline*}
 \mathsf{H}^{\langle \mu+2\varsigma-\lambda,2\rho \rangle}( \Gr^\lambda \cap z^{-\varsigma} X_{\mu+\varsigma}, a^! (\chi_\mu')^*(\cL^\bk_\psi))^* \\
 \cong \mathsf{H}_c^{\langle \mu+\lambda,2\rho \rangle}( \Gr^\lambda \cap z^{-\varsigma} X_{\mu+\varsigma}, a^* (\chi_\mu')^*(\cL^\bk_{-\psi})).
\end{multline*}
The right-hand side vanishes by Lemma~\ref{lem:vanishing-CS}, hence so does the left-hand side, which completes the proof.
\end{proof}

We can finally give the proof of Theorem~\ref{thm:equiv}.

\begin{proof}[Proof of Theorem~{\rm \ref{thm:equiv}}]
Let $\tau : G_\scK \to \Gr$ be the projection. Let $\lambda \in \bX^\vee_+$, and denote by $m_{\varsigma,\lambda}$ the restriction of $m^{\Gr}$ to $\tau^{-1}(\overline{\strut \Gr^\varsigma}) \times^{G_\scO} \overline{\Gr^\lambda}$ (see~\S\ref{ss:convolution} for the notation). Then it is well known that:
\begin{itemize}
\item
$m_{\varsigma,\lambda}$ takes values in $\overline{\Gr^{\lambda+\varsigma}} = \overline{\strut X_{\lambda+\varsigma}}$;
\item
its restriction to the preimage of $X_{\lambda+\varsigma}$ is an isomorphism;
\item
this preimage is contained in $\tau^{-1}(X_\varsigma) \times^{\GO} \Gr^\lambda$.
\end{itemize}
These properties imply
that the perverse sheaf $\Phi^0(\cJ_!(\lambda,\bk))$ is supported on $\overline{X_{\lambda+\varsigma}}$, and that its restriction to $X_{\lambda+\varsigma}$ is a perversely shifted local system of rank $1$. The same comments apply to $\Phi^0(\cJ_*(\lambda,\bk))$.
Hence there exist canonical morphisms
\[
f^\bk_\lambda : \Delta^\IW_{\lambda + \varsigma}(\bk) \to \Phi^0(\cJ_!(\lambda,\bk)) \quad \text{and} \quad g_\lambda^\bk : \Phi^0(\cJ_*(\lambda,\bk)) \to \nabla^\IW_{\lambda + \varsigma}(\bk)
\]
whose restrictions to $X_{\lambda+\varsigma}$ are isomorphisms.

We claim that $f_\lambda^\bk$ is an isomorphism.
First we note that all of our constructions are compatible with extension-of-scalars functors in the obvious sense (see in particular~\cite[Proposition~8.1]{mv} for the case of $\cJ_!(\lambda,\bk)$; the case of the Whittaker standard object is much easier since no perverse truncation is involved). If $\bk \in \{\K,\L\}$, by Proposition~\ref{prop:vanishing-Hom-Phi} we know that $\Phi^0(\cJ_!(\lambda,\bk))$ has no quotient of the form $\IC^\IW_{\mu+\varsigma}(\bk)$ with $\mu \neq \lambda$; therefore $f_\lambda^\bk$ is surjective. 
The surjectivity of $f_\lambda^\L$ implies that $f_\lambda^{\mathbb{O}}$ must be surjective also. On the other hand, by Remark~\ref{rmk:Hom-J!-J*-IW}\eqref{it:J!-J*-IW-simple} the object $\Delta^\IW_{\lambda + \varsigma}(\K)$ is simple; hence $f_\lambda^{\mathbb{K}}$ is injective, which implies that $\ker(f_\lambda^{\mathbb{O}})$ is a torsion object. Since this object embeds in the torsion-free object $\Delta^\IW_{\lambda + \varsigma}(\mathbb{O})$, it must be zero. We finally obtain that $f_\lambda^{\mathbb{O}}$ is an isomorphism, so that $f_\lambda^\K$ and $f_\lambda^\L$ are isomorphisms as well.

Once we know that $f_\lambda^\bk$ is an isomorphism, by Verdier duality (see the comments preceding Lemma~\ref{lem:Phi-exact}) we deduce that $g_\lambda^\bk$ is an isomorphism as well. (More precisely, we use the claim about $f_\lambda^\bk$ in the setting where $\psi$ is replaced by $\psi^{-1}$, and the fact that $\mathbb{D}_{\Gr}(\cJ_!(\lambda,\bk)) = \cJ_*(\lambda,\bk)$, see~\cite[Proposition~8.1(c)]{mv}.)

Now we
conclude the proof as follows.
Since $\Phi^0$ is exact, it induces a functor
\[
\Db(\Phi^0) : \Db \Perv_{G_\scO}(\Gr, \bk) \to \Db \Perv_{\IW}(\Gr, \bk).
\]
We will prove that $\Db(\Phi^0)$ is an equivalence, which will imply that $\Phi^0$ is an equivalence as well, hence will conclude the proof.
It is not difficult to see that the category $\Db \Perv_{G_\scO}(\Gr, \bk)$, resp.~$\Db \Perv_{\IW}(\Gr, \bk)$, is generated as a triangulated category by the objects $\{\cJ_!(\lambda,\bk) : \lambda \in \bX^\vee_+\}$, resp.~by the objects $\{\Delta^\IW_{\lambda+\varsigma}(\bk) : \lambda \in \bX^\vee_+\}$, as well as by the objects $\{\cJ_*(\lambda,\bk) : \lambda \in \bX^\vee_+\}$, resp.~by the objects $\{\nabla^\IW_{\lambda+\varsigma}(\bk) : \lambda \in \bX^\vee_+\}$. Hence to conclude it suffices to prove that for any $\lambda,\mu \in \bX^\vee_+$ and any $n \in \Z$ the functor $\Phi^0$ induces an isomorphism
\[
 \Ext^n_{\Perv_{G_\scO}(\Gr, \bk)}(\cJ_!(\lambda,\bk), \cJ_*(\mu,\bk)) \xrightarrow{\sim} \Ext^n_{\Perv_{\IW}(\Gr, \bk)}(\Delta^\IW_{\lambda+\varsigma}(\bk), \nabla^\IW_{\mu+\varsigma}(\bk)).
\]
However, this is clear from Remark~\ref{rmk:Hom-J!-J*}\eqref{it:Hom-J!-J*} and Remark~\ref{rmk:Hom-J!-J*-IW}\eqref{it:Hom-J!-J*-IW}.
\end{proof}

\begin{rmk}
\begin{enumerate}
\item
One can explicitly describe the inverse to $\Phi^0$, as follows. In view of~\eqref{eqn:Phi-Ind-For}, the functor
\[
\Psi :=\rInd_{z^{-\varsigma} J z^{\varsigma}}^{\GO} \circ \mathsf{F}_\varsigma^{-1} \circ \For^{(I_\uu^+, \chi_{I^+})}_J [\langle \varsigma,2\rho \rangle] : \Db_\IW(\Gr,\bk) \to \Db_{\GO}(\Gr,\bk)
\]
is right adjoint to $\Phi$. Since $\Phi$ is exact, $\Psi$ is left exact, and the functor $\Psi^0:=\pH^0 \circ \Psi_{|\Perv_\IW(\Gr,\bk)}$ is right adjoint to $\Phi^0$. Since $\Phi^0$ is an equivalence, $\Psi^0$ must be its inverse.
\item
 From the point of view suggested by the Finkelberg--Mirkovi\'c conjecture (see~\S\ref{ss:intro-fm}), the isomorphisms $f^\bk_\lambda$ and $g^\bk_\lambda$ are geometric analogues of the isomorphism stated in~\cite[Proposition~II.3.19]{jantzen}.
\end{enumerate}
\end{rmk}

\section{Applications}
\label{sec:applications}

We continue with the assumptions of Sections~\ref{sec:cont-sheaves}--\ref{sec:spherical-IW}; but from now on (except in Remark~\ref{rmk-tilting-ring})
for simplicity we assume that $\bk$ is a field.

\subsection{Some perverse sheaves associated with regular \texorpdfstring{$W$}{W}-orbits in \texorpdfstring{$\bX^\vee$}{X}}
\label{ss:soergel}


Consider the flag variety $\mathscr{B}=G/B^-$, and let $U^-$ be the unipotent radical of $B^-$.
Recall that the category $\Perv_{U^-}(\mathscr{B},\bk)$ of $U^-$-equivariant perverse sheaves on $\mathscr{B}$ has a natural structure of highest weight category, see~\cite{bgs}. Moreover, the projective cover $\mathcal{P}_e$ of the skyscraper sheaf 
at the point $B^-/B^-$ is also an injective and a tilting object; see e.g.~\cite{bezr} for details and references.

For any $\lambda \in \bX^\vee_{++}$, in the notation of~\S\ref{ss:Gr-Fl} we have $P_\lambda=B^-$, so that the map $p_\lambda$ defined there has codomain $\mathscr{B}$.
We set
\[
\cP_\lambda:=(p_\lambda)^*(\cP_e) [\dim(\Gr^\lambda) - \dim(\mathscr{B})].
\]
Then $\cP_\lambda$ is a perverse sheaf on $\Gr^\lambda$, and it is 
$I^-_\uu$-equivariant.
We will consider the objects
\[
\Pi_\lambda^! := (j_{\lambda})_! \cP_\lambda, \quad \Pi_\lambda^* := (j_{\lambda})_* \cP_\lambda.
\]

\begin{lem}
\label{lem:Pi-!*-perverse}
 The objects $\Pi_\lambda^!$ and $\Pi_\lambda^*$ are $I^-_\uu$-equivariant perverse sheaves on $\Gr$.
\end{lem}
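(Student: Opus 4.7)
The plan is to reduce the lemma to the perversity of standard and costandard objects in $\Perv_{I^-_\uu}(\Gr,\bk)$ (namely the $\Delta_\mu^\Gr(\bk)$ and $\nabla_\mu^\Gr(\bk)$ from~\S\ref{ss:categories-perv}), by exploiting the tilting structure of $\cP_e$. First I would dispose of the easy half. Since $p_\lambda \colon \Gr^\lambda \to \mathscr{B}$ is a smooth affine-bundle of relative dimension $\dim(\Gr^\lambda)-\dim(\mathscr{B})$, the shifted pullback $p_\lambda^*[\dim(\Gr^\lambda)-\dim(\mathscr{B})]$ is t-exact, so $\cP_\lambda$ is already perverse on $\Gr^\lambda$. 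Moreover $p_\lambda$ is $I^-$-equivariant (through $I^- \twoheadrightarrow B^- \hookrightarrow G$) and $I^-_\uu$ surjects onto $U^-$, so the $U^-$-equivariance of $\cP_e$ promotes to an $I^-_\uu$-equivariance of $\cP_\lambda$. Since $j_\lambda$ is $I^-_\uu$-equivariant and $(j_\lambda)_!,(j_\lambda)_*$ preserve equivariance, this settles the equivariance of $\Pi_\lambda^!$ and $\Pi_\lambda^*$.

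Next I would attack the perversity assertion on $\Gr$. The key input is that $\cP_e$ is a tilting object of $\Perv_{U^-}(\mathscr{B},\bk)$, so it admits both a $\Delta$-flag and a $\nabla$-flag whose subquotients are of the form $(\iota_w)_! \underline{\bk}_{\mathscr{B}_w}[\dim \mathscr{B}_w]$ and $(\iota_w)_* \underline{\bk}_{\mathscr{B}_w}[\dim \mathscr{B}_w]$ respectively, where $\iota_w \colon \mathscr{B}_w \hookrightarrow \mathscr{B}$ runs over the inclusions of $U^-$-orbits. Applying the t-exact shifted pullback $p_\lambda^*[\dim(\Gr^\lambda)-\dim(\mathscr{B})]$ and invoking smooth base change, I would obtain analogous filtrations of $\cP_\lambda$ in $\Perv_{I^-_\uu}(\Gr^\lambda,\bk)$, whose subquotients are $(k_w)_! \underline{\bk}_{Y_w}[\dim Y_w]$ and $(k_w)_* \underline{\bk}_{Y_w}[\dim Y_w]$, where $k_w \colon Y_w := p_\lambda^{-1}(\mathscr{B}_w) \hookrightarrow \Gr^\lambda$ is the inclusion of the corresponding $I^-$-orbit.

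I would then push these filtrations forward. By transitivity of $!$- and $*$-pushforwards, each subquotient is sent by $(j_\lambda)_!$ (resp.~$(j_\lambda)_*$) to $(j_\lambda \circ k_w)_! \underline{\bk}_{Y_w}[\dim Y_w]$ (resp.~its $*$-variant), that is, a standard (resp.~costandard) object in $\Perv_{I^-_\uu}(\Gr,\bk)$, already known to be perverse by~\cite[Corollaire~4.1.3]{bbd}. Since $(j_\lambda)_!$ and $(j_\lambda)_*$ are triangulated, the filtration of $\cP_\lambda$ produces a sequence of distinguished triangles whose outer terms are perverse on $\Gr$; a short induction on filtration length, using that the middle term of a distinguished triangle of perverse sheaves is perverse (read off from the perverse cohomology long exact sequence), then yields that $\Pi_\lambda^!$ and $\Pi_\lambda^*$ are perverse on $\Gr$.

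The main obstacle I anticipate is the bookkeeping in the second step: verifying rigorously that smooth pullback along $p_\lambda$ carries the tilting filtrations of $\cP_e$ to filtrations of $\cP_\lambda$ with subquotients of the asserted form. This will ultimately follow from smooth base change (to commute $p_\lambda^*$ with $(\iota_w)_!$ and $(\iota_w)_*$ up to the shift by relative dimension) together with the fact that $p_\lambda^{-1}(\mathscr{B}_w)$ is exactly the $I^-$-orbit $Y_w$ in $\Gr^\lambda$; but setting up the equivariant versions of these identifications is where the proof will require some care.
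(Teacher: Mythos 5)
Your proposal is correct and follows essentially the same route as the paper: use the standard and costandard filtrations of the tilting object $\cP_e$, transport them through the t-exact shifted pullback along the affine bundle $p_\lambda$ and the pushforwards $(j_\lambda)_!$, $(j_\lambda)_*$ to get filtrations of $\Pi_\lambda^!$, $\Pi_\lambda^*$ with subquotients the objects $\Delta^\Gr_{v(\lambda)}(\bk)$, resp.\ $\nabla^\Gr_{v(\lambda)}(\bk)$, which are perverse by~\cite[Corollaire~4.1.3]{bbd}, and conclude by extension-closedness of the perverse heart, with equivariance coming from that of $\cP_\lambda$. You simply spell out the identification $p_\lambda^{-1}(\mathscr{B}_w)=Y_w$ and the smooth base change step that the paper leaves implicit.
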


\begin{proof}
 As recalled above,
 $\cP_e$ 
 admits both a standard filtration and a costandard filtration. It follows that $\Pi_\lambda^!$, resp.~$\Pi_\lambda^*$, admits a filtration (in the sense of triangulated categories) with subquotients of the form $\Delta^{\Gr}_{v(\lambda)}$, resp.~$\nabla^{\Gr}_{v(\lambda)}$, for $v \in \Wf$. Since these objects are perverse sheaves, it follows that $\Pi_\lambda^!$ and $\Pi_\lambda^*$ are perverse. The fact that these perverse sheaves are $I^-_\uu$-equivariant readily follows from the fact that $\cP_\lambda$ is $I^-_\uu$-equivariant.
\end{proof}


\begin{lem}
\label{lem:Pi-varsigma}
There exists a canonical isomorphism $\Pi_\varsigma^! \cong \Pi_\varsigma^*$.
\end{lem}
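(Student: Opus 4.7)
The strategy is to show that the canonical adjunction morphism $(j_\varsigma)_! \cP_\varsigma \to (j_\varsigma)_* \cP_\varsigma$ is an isomorphism; this will provide the desired canonical identification $\Pi_\varsigma^! \cong \Pi_\varsigma^*$. Equivalently, one must show that $\cP_\varsigma$ extends ``cleanly'' from $\Gr^\varsigma$ to $\overline{\Gr^\varsigma}$, i.e.~that the cone of this morphism (which is automatically supported on $\overline{\Gr^\varsigma} \setminus \Gr^\varsigma$) vanishes.

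First I would establish Verdier self-duality of $\cP_\varsigma$. In $\Perv_{U^-}(\mathscr{B},\bk)$ the object $\cP_e$ is both a projective cover and an injective envelope of the self-dual skyscraper at $B^-/B^-$, hence is self-Verdier-dual. Since $p_\varsigma : \Gr^\varsigma \to \mathscr{B}$ is a smooth affine bundle of relative dimension $d := \dim \Gr^\varsigma - \dim \mathscr{B}$, one has $(p_\varsigma)^! \cong (p_\varsigma)^*[2d]$, so
\[
\mathbb{D}(\cP_\varsigma) = \mathbb{D}\bigl( (p_\varsigma)^* \cP_e [d] \bigr) = (p_\varsigma)^! (\mathbb{D}\cP_e)[-d] = (p_\varsigma)^* \cP_e[d] = \cP_\varsigma.
\]
Applying Verdier duality on $\Gr$ to the adjunction morphism then yields $\mathbb{D}(\Pi_\varsigma^!) \cong \Pi_\varsigma^*$, so its cone is (up to shift) anti-self-dual and supported on the boundary.

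Next I would show this cone vanishes. Since $\varsigma$ is the minimum of $(\bX^\vee_{++}, \preceq)$, every orbit $\Gr^\mu$ contained in $\overline{\Gr^\varsigma} \setminus \Gr^\varsigma$ corresponds to a non-strictly-dominant $\mu \in \bX^\vee_+$ with $\mu \prec \varsigma$. Combining the standard filtration of $\Pi_\varsigma^!$ by the $\Delta^\Gr_{v(\varsigma)}$ ($v \in \Wf$) furnished by Lemma~\ref{lem:Pi-varsigma}, the costandard filtration of $\Pi_\varsigma^*$ by the $\nabla^\Gr_{v(\varsigma)}$, and the projective-injective structure of $\cP_e$, one wants to argue that the glueing of standards prescribed by $\cP_e$ produces systematic cancellation of all boundary contributions. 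A clean way to implement this is to prove that $\Pi_\varsigma^!$ is itself a tilting object in $\Perv_{I^-_\uu}(\Gr, \bk)$: combined with the Verdier self-duality $\mathbb{D}(\Pi_\varsigma^!) \cong \Pi_\varsigma^*$ and the uniqueness of indecomposable tilting extensions with given standard filtration, this would force $\Pi_\varsigma^! \cong \Pi_\varsigma^*$.

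The main obstacle is this cleanness step: the individual standards $\Delta^\Gr_{v(\varsigma)}$ do \emph{not} have vanishing costalks on the boundary, so one must really exploit how they are assembled inside $\cP_\varsigma$. A concrete route is to realize $\cP_e$ as an iterated convolution of simple-reflection standards and costandards on $\mathscr{B}$ (a Bott--Samelson-style description), transfer this description to $\Gr^\varsigma$ via $(p_\varsigma)^*$, and exhibit $\Pi_\varsigma^!$ and $\Pi_\varsigma^*$ as the same iterated convolution of perverse sheaves on $\Gr$, whereupon the canonical isomorphism becomes manifest.
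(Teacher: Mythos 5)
Your reduction is set up correctly (the canonical map $(j_\varsigma)_!\cP_\varsigma \to (j_\varsigma)_*\cP_\varsigma$, Verdier self-duality of $\cP_e$ and hence of $\cP_\varsigma$, so that $\mathbb{D}(\Pi^!_\varsigma)\cong\Pi^*_\varsigma$), but this only halves the problem: the entire content of the lemma is the cleanness statement that the cone of this map vanishes, i.e.\ the vanishing of stalks/costalks on the boundary $I^-_\uu$-orbits of $\overline{\Gr^\varsigma}$, and this is exactly the step you acknowledge you do not prove. Neither of your two suggestions closes it. Proving that $\Pi^!_\varsigma$ is tilting in $\Perv_{I^-_\uu}(\Gr,\bk)$ means exhibiting a costandard filtration, which is the assertion itself, so that reduction is circular; and the ``concrete route'' rests on a false description of $\cP_e$: iterated convolutions of simple-reflection standard and costandard objects never produce the big tilting (when lengths add one has $\Delta_v\star\Delta_w\cong\Delta_{vw}$, and $\Delta_s\star\nabla_s$ is the monoidal unit, so such products only yield objects of (co)standard type). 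Moreover, even granting some convolution presentation of $\cP_e$, commuting it past $(j_\varsigma)_!$ versus $(j_\varsigma)_*$ on $\Gr$ is again precisely the cleanness one is trying to establish.

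For comparison, the paper does not argue directly at all: it simply invokes \cite[Proposition~15.2]{fg}, whose $\mathscr{D}$-module proof is asserted to transfer verbatim. If you want an argument in the spirit of your averaging/duality intuition, the efficient route inside this paper is via Lemma~\ref{lem:Pi-averaging} (whose proof does not use the present lemma): it gives $\Pi^!_\varsigma \cong \Av_{I^-_\uu,!}(\Delta^\IW_\varsigma(\bk))[-\dim U^-]$ and $\Pi^*_\varsigma \cong \Av_{I^-_\uu,*}(\nabla^\IW_\varsigma(\bk))[\dim U^-]$, and since $\Delta^\IW_\varsigma(\bk)=\nabla^\IW_\varsigma(\bk)$ by~\eqref{eqn:Delta-nabla-sigma}, the lemma follows from the statement that $!$- and $*$-averaging along $I^-_\uu$ agree, up to the shift by $2\dim U^-$, on Iwahori--Whittaker-equivariant objects; this is a theorem of Bezrukavnikov--Braverman--Mirkovi\'c type (see \cite{bbrm}, \cite[\S 4.6]{by}, and the finite-dimensional instance~\eqref{eqn:Pe-Av} already used in the paper). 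Some such external input---either this averaging theorem or the citation of~\cite{fg}---is what must replace your missing cleanness step.
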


\begin{proof}
This claim is proved in the $\mathscr{D}$-modules setting in~\cite[Proposition~15.2]{fg}. The arguments apply verbatim in the present context.
\end{proof}

In view of this lemma, the object $\Pi_\varsigma^! = \Pi_\varsigma^*$ will be denoted $\Pi_\varsigma$.

Recall now that we have the ``negative'' Iwahori subgroup $I^-$ (associated with the negative Borel $B^-$), but also the ``positive'' Iwahori subgroup $I^+$ (associated with the positive Borel $B^+$) which was used to define the Iwahori--Whittaker category.
Let $I_\circ$ be the kernel of the morphism $\GO \to G$. Then $I_\circ = I^-_\uu \cap I_\uu^+$, and the morphism $\chi_{I^+}$ is trivial on $I_\circ$. It follows that there exists a natural forgetful functor
\[
 \For^\IW_{I_\circ} : \Db_{\IW}(\Gr,\bk) \to \Db_{I_\circ}(\Gr,\bk).
\]
We also have a forgetful functor
\[
 \For^{I^-_\uu}_{I_\circ} : \Db_{I^-_\uu}(\Gr,\bk) \to \Db_{I_\circ}(\Gr,\bk)
\]
which admits both a left and a right adjoint, denoted $\lInd_{I_\circ}^{I^-_\uu}$ and $\rInd_{I_\circ}^{I^-_\uu}$ respectively, see~\S\ref{ss:categories}. 
We set
\begin{gather*}
 \Av_{I^-_\uu,*}:= \rInd_{I_\circ}^{I^-_\uu} \circ \For^\IW_{I_\circ} : \Db_{\IW}(\Gr,\bk) \to \Db_{I_\uu^-}(\Gr,\bk); \\
 \Av_{I^-_\uu,!}:= \lInd_{I_\circ}^{I^-_\uu} \circ \For^\IW_{I_\circ} : \Db_{\IW}(\Gr,\bk) \to \Db_{I^-_\uu}(\Gr,\bk).
\end{gather*}


\begin{lem}
\label{lem:Pi-averaging}
For any $\lambda \in \bX^\vee_{++}$ we have
\begin{equation*}
\Pi_\lambda^! \cong \Av_{I^-_\uu,!} \bigl( \Delta_\lambda^\IW (\bk) \bigr)[-\dim U^-], \quad \Pi_\lambda^* \cong \Av_{I^-_\uu,*} \bigl( \nabla_\lambda^\IW (\bk) \bigr)[\dim U^-].
\end{equation*}
\end{lem}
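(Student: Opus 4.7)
My plan is to exploit the affine bundle $p_\lambda: \Gr^\lambda \to \mathscr{B}$ (available since $\lambda \in \bX^\vee_{++}$ forces $P_\lambda = B^-$) to reduce the statement to a classical averaging identification on the flag variety $\mathscr{B}$.

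First, since $\overline{\Gr^\lambda}$ is $\GO$-stable, $j_\lambda$ is $I^-_\uu$-equivariant, so $(j_\lambda)_!$ and $(j_\lambda)_*$ commute with $\Av_{I^-_\uu,!}$ and $\Av_{I^-_\uu,*}$ respectively. Writing $\Delta^\IW_\lambda(\bk) = (j_\lambda)_!(\tilde{j}_\lambda)_! \mathcal{L}$ with $\tilde{j}_\lambda: X_\lambda \hookrightarrow \Gr^\lambda$ and $\mathcal{L}$ the perversely shifted Whittaker local system on $X_\lambda$, this reduces the claim to an identification on $\Gr^\lambda$ itself.

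Next, $\tilde{j}_\lambda$ is the $p_\lambda$-pullback of $j^\circ: \mathring{\mathscr{B}} \hookrightarrow \mathscr{B}$ (where $\mathring{\mathscr{B}} := U^+ \cdot B^-/B^-$ is the big cell), and the Whittaker local system on $X_\lambda$ is pulled back from the corresponding local system on $\mathring{\mathscr{B}}$. Smooth base change therefore gives $(\tilde{j}_\lambda)_! \mathcal{L} \cong p_\lambda^*\bigl(\Delta^{\mathrm{Wh}}_{\mathscr{B}}\bigr)[\dim\Gr^\lambda - \dim\mathscr{B}]$ (and similarly with $*$), where $\Delta^{\mathrm{Wh}}_{\mathscr{B}}$, resp.~$\nabla^{\mathrm{Wh}}_{\mathscr{B}}$, denotes the Whittaker standard, resp.~costandard, on $\mathscr{B}$. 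Since the fibers of $p_\lambda$ are single $I_\circ$-orbits (a standard input from Mirkovi\'c--Vilonen theory) while $I^-_\uu/I_\circ = U^-$ acts on $\Gr^\lambda$ through the $G$-equivariance of $p_\lambda$, and $I_\circ$-equivariance is automatic by pro-unipotence, a direct check using the defining formulas for $\lInd$ and $\rInd$ shows that $p_\lambda^*$ intertwines the analogous Whittaker-to-$U^-$ averaging functors $\Av_{U^-,?}$ on $\mathscr{B}$ with the functors $\Av_{I^-_\uu,?}$ on $\Gr^\lambda$. The lemma thus reduces to the identifications
\[
\mathcal{P}_e \cong \Av_{U^-,!}\bigl(\Delta^{\mathrm{Wh}}_{\mathscr{B}}\bigr)[-\dim U^-] \cong \Av_{U^-,*}\bigl(\nabla^{\mathrm{Wh}}_{\mathscr{B}}\bigr)[\dim U^-]
\]
in $\Perv_{U^-}(\mathscr{B}, \bk)$.

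These final identifications are classical: $\mathcal{P}_e$ is the indecomposable projective-injective-tilting object in $\Perv_{U^-}(\mathscr{B}, \bk)$, and Whittaker-averaging the clean Whittaker IC on the big cell produces an object with the required standard (resp.~costandard) filtration whose head (resp.~socle) is the skyscraper at $B^-/B^-$, forcing identification with $\mathcal{P}_e$ by the universal property of the projective cover. The main obstacle will be the verification of compatibility between $p_\lambda^*$ and the averaging functors, which requires careful tracking of dimension shifts and equivariance data. An alternative route for the $*$-statement is Verdier duality, using that $\mathbb{D}(\Delta^\IW_\lambda(\bk))$ is the Whittaker costandard for the opposite character $-\psi$ and that $\mathcal{P}_e$ is Verdier-self-dual.
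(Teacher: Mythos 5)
Your argument is essentially the paper's proof: you commute the averaging functors with $(j_\lambda)_!$, $(j_\lambda)_*$ and, via the $\GO$-equivariance of the smooth map $p_\lambda$ and base change, with $(p_\lambda)^*$, thereby reducing the lemma to the identity $\cP_e \cong \Av_{U^-,!}(\Delta^{(U^+,\chi)}_e)[-\dim U^-] \cong \Av_{U^-,*}(\Delta^{(U^+,\chi)}_e)[\dim U^-]$ on $\mathscr{B}$, which is exactly the known fact the paper invokes (citing \cite[\S 4.6]{by} and \cite[Lemma~5.18]{modrap1}). The only cosmetic difference is that you sketch a justification of this flag-variety identity (your appeal to the projective cover is a bit loose --- one should rather argue that the averaged object is projective, or tilting, using adjunction and cleanness), whereas the paper simply cites it; this does not change the substance of the proof.
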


\begin{proof}
 Consider the constructible equivariant derived categories
 \[
  \Db_{U^-}(\mathscr{B},\bk) \quad \text{and} \quad \Db_{(U^+, \chi^*(\cL^\bk_\psi))}(\mathscr{B},\bk)
 \]
of sheaves on $\mathscr{B}$ which are $U^-$-equivariant and $(U^+, \chi^*(\cL^\bk_\psi))$-equivariant respectively. These categories are related by functors
 \[
  \Av_{U^-,*}, \, \Av_{U^-,!} : \Db_{(U^+, \chi^*(\cL^\bk_\psi))}(\mathscr{B},\bk) \to \Db_{U^-}(\mathscr{B},\bk).
 \]
Moreover, if $\Delta^{(U^+, \chi)}_e$ denotes the $!$-extension of the shift by $\dim U^+$ of the unique simple $(U^+, \chi^*(\cL^\bk_\psi))$-equivariant local system on the orbit $U^+ B^-/B^- \subset \mathscr{B}$ (which also coincides with the $*$-extension of this local system), then it is well known that we have isomorphisms
\begin{equation}
\label{eqn:Pe-Av}
 \Av_{U^-,!} \left( \Delta^{(U^+, \chi)}_e \right) [-\dim U^-] \cong \cP_e \cong \Av_{U^-,*} \left( \Delta^{(U^+, \chi)}_e \right) [\dim U^-],
\end{equation}
see~\cite[\S 4.6]{by} or~\cite[Lemma~5.18]{modrap1}.

Now, the functors $\Av_{I^-_\uu,*}$ and $\Av_{I^-_\uu,!}$ have versions for the variety $\Gr^\lambda$, which we will denote similarly. Clearly we have isomorphisms of functors
\begin{equation}
\label{eqn:Av-jlambda}
 \Av_{I^-_\uu,*} \circ (j_\lambda)_* \cong (j_\lambda)_* \circ \Av_{I^-_\uu,*}, \quad \Av_{I^-_\uu,!} \circ (j_\lambda)_! \cong (j_\lambda)_! \circ \Av_{I^-_\uu,!}.
\end{equation}
Moreover, the map $p_\lambda$ induces a morphism $I^-_\uu \times^{I_\circ} \Gr^\lambda \to U^- \times \mathscr{B}$ compatible with the action maps in the obvious way. Using the base change theorem (and the fact that $p_\lambda$ is smooth), we deduce isomorphisms of functors
\begin{equation}
\label{eqn:Av_plambda}
 \Av_{I^-_\uu,*} \circ (p_\lambda)^* \cong (p_\lambda)^* \circ \Av_{U^-,*}, \quad \Av_{I^-_\uu,!} \circ (p_\lambda)^* \cong (p_\lambda)^* \circ \Av_{U^-,!}.
\end{equation}
Since
\begin{multline*}
\Delta^{\IW}_\lambda(\bk) = (j_\lambda)_! (p_\lambda)^* \Delta^{(U^+, \chi)}_e[\dim \Gr^\lambda - \dim \mathscr{B}] \quad \text{and} \\
\nabla^{\IW}_\lambda(\bk) = (j_\lambda)_* (p_\lambda)^* \Delta^{(U^+, \chi)}_e[\dim \Gr^\lambda - \dim \mathscr{B}]
\end{multline*}
the isomorphisms of the lemma finally follow from~\eqref{eqn:Av-jlambda},~\eqref{eqn:Av_plambda} and~\eqref{eqn:Pe-Av}.
\end{proof}

%
%

The following proposition is the main result of this subsection.

\begin{prop}
\label{prop:Lusztig-formula}
For any $\lambda \in \bX^\vee_{+}$, we have isomorphisms
\[
\Pi_\varsigma \star^{\GO} \cJ_!(\lambda,\bk) \cong \Pi^!_{\lambda + \varsigma}, \qquad \Pi_\varsigma \star^{\GO} \cJ_*(\lambda,\bk) \cong \Pi^*_{\lambda + \varsigma}.
\]
\end{prop}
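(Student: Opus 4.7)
The plan is to deduce both isomorphisms by applying the averaging functors $\Av_{I^-_\uu,!}$ and $\Av_{I^-_\uu,*}$ to the isomorphisms $f_\lambda^\bk$ and $g_\lambda^\bk$ constructed in the course of the proof of Theorem~\ref{thm:equiv}, and then invoking Lemmas~\ref{lem:Pi-averaging} and~\ref{lem:Pi-varsigma}. The key technical input I will need is a compatibility between the averaging functors and the right convolution action of the Satake category, namely functorial isomorphisms
\[
\Av_{I^-_\uu,!}\bigl(\cF \star^{\GO} \cG\bigr) \cong \Av_{I^-_\uu,!}(\cF) \star^{\GO} \cG, \qquad \Av_{I^-_\uu,*}\bigl(\cF \star^{\GO} \cG\bigr) \cong \Av_{I^-_\uu,*}(\cF) \star^{\GO} \cG
\]
for $\cF \in \Db_\IW(\Gr,\bk)$ and $\cG \in \Db_{\GO}(\Gr,\bk)$. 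This should be essentially formal: the modification of equivariance concerns the left action of $I^-_\uu$ on $\Gr$, whereas the convolution with a $\GO$-equivariant object in the construction of~\S\ref{ss:convolution} only uses the right action of $\GO$ on the first factor $\GK$. Concretely, all of the maps $p^\Gr, q^\Gr, m^\Gr$ in that diagram are $I^-_\uu$-equivariant for the obvious left actions, so the compatibility follows by combining $I^-_\uu$-equivariant base change with the definition of $\Av_{I^-_\uu,?}$ in terms of $\For^\IW_{I_\circ}$ and $\mathsf{Ind}_{I_\circ}^{I^-_\uu}$ (cf.~\S\ref{ss:categories}).

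Granting this compatibility, the proof of the first isomorphism runs as follows. Applying $\Av_{I^-_\uu,!}$ to the isomorphism $f_\lambda^\bk : \Delta^\IW_{\lambda+\varsigma}(\bk) \xrightarrow{\sim} \Delta^\IW_\varsigma(\bk) \star^{\GO} \cJ_!(\lambda,\bk)$ yields an isomorphism
\[
\Av_{I^-_\uu,!}\bigl(\Delta^\IW_{\lambda+\varsigma}(\bk)\bigr) \cong \Av_{I^-_\uu,!}\bigl(\Delta^\IW_\varsigma(\bk)\bigr) \star^{\GO} \cJ_!(\lambda,\bk).
\]
By Lemma~\ref{lem:Pi-averaging}, the left-hand side identifies with $\Pi^!_{\lambda+\varsigma}[\dim U^-]$, while the right-hand side identifies with $\Pi^!_\varsigma[\dim U^-] \star^{\GO} \cJ_!(\lambda,\bk)$. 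Invoking Lemma~\ref{lem:Pi-varsigma} to replace $\Pi^!_\varsigma$ by $\Pi_\varsigma$ and cancelling the overall shift gives the desired $\Pi_\varsigma \star^{\GO} \cJ_!(\lambda,\bk) \cong \Pi^!_{\lambda+\varsigma}$.

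The second isomorphism is obtained by the same procedure, with $\Av_{I^-_\uu,!}$ replaced by $\Av_{I^-_\uu,*}$, $f_\lambda^\bk$ replaced by $g_\lambda^\bk : \Delta^\IW_\varsigma(\bk) \star^{\GO} \cJ_*(\lambda,\bk) \xrightarrow{\sim} \nabla^\IW_{\lambda+\varsigma}(\bk)$, the first half of Lemma~\ref{lem:Pi-averaging} replaced by its second half, and the shift $[-\dim U^-]$ replaced by $[\dim U^-]$; Lemma~\ref{lem:Pi-varsigma} again identifies $\Pi^*_\varsigma$ with $\Pi_\varsigma$. I expect the main, and really only nontrivial, obstacle to be the bookkeeping in establishing the convolution-averaging compatibility above rigorously; once that is in place the proposition follows by a short assembly of Theorem~\ref{thm:equiv}, Lemma~\ref{lem:Pi-averaging}, and Lemma~\ref{lem:Pi-varsigma}.
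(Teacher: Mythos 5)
Your proposal is correct and follows essentially the same route as the paper: applying $\Av_{I^-_\uu,!}$ (resp.\ $\Av_{I^-_\uu,*}$) to the isomorphism $\Delta^\IW_\varsigma(\bk) \star^{\GO} \cJ_!(\lambda,\bk) \cong \Delta^\IW_{\lambda+\varsigma}(\bk)$ (resp.\ its costandard analogue) from the proof of Theorem~\ref{thm:equiv}, using the commutation of averaging with $(-) \star^{\GO} \cF$, and then invoking Lemma~\ref{lem:Pi-averaging}. The only difference is that you spell out the (indeed formal) averaging--convolution compatibility in more detail than the paper does.
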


\begin{proof}
The first isomorphism is obtained by applying the functor $\Av_{I^-_\uu,!}[-\dim U^-]$ to the isomorphism
\[
 \Delta^{\IW}_\varsigma \star^{\GO} \cJ_!(\lambda,\bk) \cong \Delta^{\IW}_{\lambda+\varsigma}(\bk)
\]
(see the proof of Theorem~\ref{thm:equiv})
and then using Lemma~\ref{lem:Pi-averaging} and the fact that $\Av_{I^-_\uu,!}$ commutes with the functor $(-) \star^{\GO} \cF$ for any $\cF$ in $\Db_{\GO}(\Gr,\bk)$. The proof of the second isomorphism is similar, using $\Av_{I^-_\uu,*}$ instead of $\Av_{I^-_\uu,!}$.
\end{proof}

\begin{rmk}
 Consider the restrictions
 \[
  \Av_{I^-_\uu,!}^0, \Av_{I^-_\uu,*}^0 : \Perv_{\IW}(\Gr,\bk) \to \Db_{I^-_\uu}(\Gr,\bk)
 \]
of $\Av_{I^-_\uu,!}$ and $\Av_{I^-_\uu,*}$ to the heart of the perverse t-structure. Then there exists an isomorphism of functors
\begin{equation*}
\Av_{I^-_\uu,!}^0[-\dim U^-] \xrightarrow{\sim} \Av_{I^-_\uu,*}^0[\dim U^-],
\end{equation*}
and moreover these functors take values in $\Perv_{I^-_\uu}(\Gr,\bk)$ and send tilting perverse sheaves to tilting perverse sheaves. (Here the highest weight structure on $\Perv_{I^-_\uu}(\Gr,\bk)$ is the standard one, as considered e.g.~in~\cite[\S 3.3]{bgs}.) In fact, as in the proof of Proposition~\ref{prop:Lusztig-formula}, for any $\cF$ in $\Perv_{\GO}(\Gr,\bk)$ we have
\[
 \bigl( \Av_{I^-_\uu,!}^0[-\dim U^-] \bigr) \circ \Phi^0(\cF) \cong \Pi_{\varsigma} \star^{\GO} \cF \cong \bigl( \Av_{I^-_\uu,*}^0[\dim U^-] \bigr) \circ \Phi^0(\cF),
\]
and then the isomorphism follows from the fact that $\Phi^0$ is an equivalence of categories, see Theorem~\ref{thm:equiv}. Once this fact is established, it follows from Lemma~\ref{lem:Pi-averaging} that this functor sends standard perverse sheaves, resp.~costandard perverse sheaves, to perverse sheaves admitting a standard filtration, resp.~a costandard filtration (see the proof of Lemma~\ref{lem:Pi-!*-perverse}); the other claims follow.
\end{rmk}

\subsection{Interpretation in terms of the Weyl character formula}
\label{ss:geometric-Weyl}

The isomorphisms in Proposition~\ref{prop:Lusztig-formula} can be considered a geometric version of the Weyl character formula as stated by Lusztig in~\cite[(6.3)]{lusztig}, in the following way. 
Let
\[
\mathscr{Z} : \Perv_{G_\scO}(\Gr, \bk) \to \Perv_{I^-}(\Fl, \bk)
\]
be the ``central'' functor
constructed (in terms of nearby cycles) in~\cite{gaitsgory}. 

\begin{lem}
\label{lem:induction-Z}
 There exists a canonical isomorphism of functors
 \[
  \rInd_{I^-}^{\GO} \circ \mathscr{Z} \cong \pi^*.
 \]
\end{lem}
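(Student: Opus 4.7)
My plan is to reduce the statement to two essential properties of Gaitsgory's central functor $\mathscr{Z}$ proved in~\cite{gaitsgory}: the centrality datum $\mathscr{Z}(\cH) \star^{I^-} \cG \cong \cG \star^{I^-} \mathscr{Z}(\cH)$ (natural in $\cG \in \Db_{I^-}(\Fl,\bk)$), and the identification $\pi_{*} \mathscr{Z}(\cH) \cong \cH$. Let $\delta_0 \in \Perv_\GO(\Gr,\bk)$ denote the skyscraper perverse sheaf at the base point $L_0 \in \Gr$, which is the unit for $\star^\GO$. Applying Lemma~\ref{lem:convolution-induction} with $\cF = \delta_0$ and $\cG = \mathscr{Z}(\cH)$ gives
\[
\rInd_{I^-}^{\GO}(\mathscr{Z}(\cH)) \;\cong\; \delta_0 \star^{\GO} \rInd_{I^-}^{\GO}(\mathscr{Z}(\cH)) \;\cong\; \pi^{*}(\delta_0) \star^{I^-} \mathscr{Z}(\cH),
\]
so it suffices to construct a canonical isomorphism $\pi^{*}(\delta_0) \star^{I^-} \mathscr{Z}(\cH) \cong \pi^{*}(\cH)$. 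By centrality the left-hand side is isomorphic to $\mathscr{Z}(\cH) \star^{I^-} \pi^{*}(\delta_0)$.

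The next step is to establish the general compatibility
\[
\cF \star^{I^-} \pi^{*}(\cK) \;\cong\; \pi^{*}\bigl( \pi_{*}(\cF) \star^{\GO} \cK \bigr)\qquad (\cF \in \Db_{I^-}(\Fl,\bk),\ \cK \in \Db_{\GO}(\Gr,\bk)).
\]
To prove it, the morphism $\pi$ induces a map $\widetilde{\pi} : \GK \times^{I^-} \Fl \to \GK \times^{I^-} \Gr$, and a direct point-set check shows that $\widetilde{\pi}$ is a $\mathscr{B}$-fibration sitting in a Cartesian square with $\pi$ and the convolution action maps $m^{\Fl}$, $m^{\Fl}_{\Gr}$. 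One also checks that $\widetilde{\pi}^{*}$ carries the twisted external product $\cF \, \widetilde{\boxtimes} \cK$ on $\GK \times^{I^-} \Gr$ to $\cF \, \widetilde{\boxtimes} \pi^{*}(\cK)$ on $\GK \times^{I^-} \Fl$. Proper base change along the Cartesian square, combined with Lemma~\ref{lem:convolution-For}, then yields the displayed compatibility.

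Taking $\cF = \mathscr{Z}(\cH)$ and $\cK = \delta_0$, together with $\pi_{*}\mathscr{Z}(\cH) \cong \cH$ and the unit property $\cH \star^{\GO} \delta_0 \cong \cH$, we obtain $\mathscr{Z}(\cH) \star^{I^-} \pi^{*}(\delta_0) \cong \pi^{*}(\cH)$; chaining this with the previous steps produces the desired isomorphism of functors $\rInd_{I^-}^{\GO} \circ \mathscr{Z} \cong \pi^{*}$. The only genuinely non-formal inputs are the two properties of $\mathscr{Z}$ imported from~\cite{gaitsgory}, and of these the centrality is where all the substantive geometric content resides; once those are in hand, the rest is a routine juggling of the various convolution bifunctors introduced in~\S\ref{ss:convolution} together with proper base change.
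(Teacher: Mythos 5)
Your argument is correct and is essentially the paper's own proof: both rest on exactly the two inputs you isolate (the centrality of $\mathscr{Z}$ and $\pi_* \circ \mathscr{Z} \cong \id$ from~\cite{gaitsgory}), with $\rInd_{I^-}^{\GO}$ rewritten as left convolution with $\underline{\bk}_{\GO/I^-} = \pi^*(\delta_0)$, moved to the other side by centrality, and right convolution with this object identified with $\pi^*\pi_*$. The only difference is one of detail: you derive these two identifications explicitly (via the unit object, Lemma~\ref{lem:convolution-induction}, base change along the Cartesian square and Lemma~\ref{lem:convolution-For}), where the paper simply asserts them.
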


\begin{proof}
 By definition, the functor $\rInd_{I^-}^{\GO}$ is given by convolution with $\underline{\bk}_{\GO/I^-}$ on the left. Since $\mathscr{Z}(\cF)$ is central for any $\cF$ in  $\Perv_{G_\scO}(\Gr, \bk)$ (see~\cite[Theorem~1(b)]{gaitsgory}), $\rInd_{I^-}^{\GO} \circ \mathscr{Z}$ is therefore the composition of $\mathscr{Z}$ with convolution on the \emph{right} with $\underline{\bk}_{\GO/I^-}$, which itself identifies with the functor $\pi^* \pi_*$. The claim follows, since $\pi_* \circ \mathscr{Z} \cong \id$ by~\cite[Theorem~1(d)]{gaitsgory}.
%
\end{proof}

Using this lemma we obtain the following reformulation of Proposition~\ref{prop:Lusztig-formula}.

\begin{prop}
\label{prop:Lusztig-formula-2}
For any $\lambda \in \bX^{\vee}_+$ there exist canonical isomorphisms
\begin{align*}
\bigl( \pi^* \Pi_\varsigma [\dim \mathscr{B}] \bigr) \star^{I^-} \mathscr{Z}(\cJ_!(\lambda, \bk)) &\cong \bigl( \pi^* \Pi^!_{\lambda+\varsigma} [\dim \mathscr{B}] \bigr), \\
\bigl( \pi^* \Pi_\varsigma [\dim \mathscr{B}] \bigr) \star^{I^-} \mathscr{Z}(\cJ_*(\lambda,\bk)) &\cong \bigl( \pi^* \Pi^*_{\lambda+\varsigma} [\dim \mathscr{B}] \bigr).
\end{align*}
\end{prop}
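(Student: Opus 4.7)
The plan is to chain together Lemmas~\ref{lem:convolution-induction} and~\ref{lem:induction-Z} with Proposition~\ref{prop:Lusztig-formula}. Since the cases $?\in\{!,*\}$ are treated identically, I handle both simultaneously, and since the shift $[\dim\mathscr{B}]$ commutes with all operations involved I suppress it during the main calculation and reinstate it at the end.

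First, I would apply Lemma~\ref{lem:convolution-induction} with $\cF=\Pi_\varsigma$ and $\cG=\mathscr{Z}(\cJ_?(\lambda,\bk))$; note that $\mathscr{Z}$ takes values in $\Perv_{I^-}(\Fl,\bk)$ by construction, so the hypothesis is met. This gives
\[
\pi^{*}(\Pi_\varsigma)\star^{I^-}\mathscr{Z}(\cJ_?(\lambda,\bk))
\;\cong\;
\Pi_\varsigma\star^{\GO}\rInd_{I^-}^{\GO}\bigl(\mathscr{Z}(\cJ_?(\lambda,\bk))\bigr).
\]
Lemma~\ref{lem:induction-Z} then identifies the right-hand side with $\Pi_\varsigma\star^{\GO}\pi^{*}\cJ_?(\lambda,\bk)$. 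The proof will be complete once we establish the compatibility
\[
\Pi_\varsigma\star^{\GO}\pi^{*}\cJ_?(\lambda,\bk)
\;\cong\;
\pi^{*}\bigl(\Pi_\varsigma\star^{\GO}\cJ_?(\lambda,\bk)\bigr),
\]
after which Proposition~\ref{prop:Lusztig-formula} delivers the required identification with $\pi^{*}\Pi^{?}_{\lambda+\varsigma}$.

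This last compatibility is the only non-formal step and is the main (though still mild) obstacle. I would prove it by proper base change along the Cartesian square
\[
\begin{array}{ccc}
\GK\times^{\GO}\Fl & \xrightarrow{m^{\Fl}} & \Fl \\
{\scriptstyle\id\times^{\GO}\pi}\big\downarrow & & \big\downarrow\,\pi \\
\GK\times^{\GO}\Gr & \xrightarrow{m^{\Gr}} & \Gr,
\end{array}
\]
whose Cartesian property is immediate from the $\GK$-equivariance of $\pi$, combined with the identification
\[
(\id\times^{\GO}\pi)^{*}\bigl(\Pi_\varsigma\,\widetilde{\boxtimes}\,\cJ_?(\lambda,\bk)\bigr)
\;\cong\;
\Pi_\varsigma\,\widetilde{\boxtimes}\,\pi^{*}\cJ_?(\lambda,\bk),
\]
which follows directly from the characterization of the twisted external product recalled in~\S\ref{ss:convolution}: both sides have common pullback $\Pi_\varsigma\lboxtimes_{\bk}\pi^{*}\cJ_?(\lambda,\bk)$ to $\GK\times\Fl$. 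Reinstating the shift $[\dim\mathscr{B}]$, which passes through each step, then yields the two stated isomorphisms.
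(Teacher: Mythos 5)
Your proposal is correct and follows essentially the same route as the paper: Lemma~\ref{lem:convolution-induction}, then Lemma~\ref{lem:induction-Z}, then the compatibility $\Pi_\varsigma \star^{\GO} \pi^*(-) \cong \pi^*\bigl(\Pi_\varsigma \star^{\GO} (-)\bigr)$, and finally Proposition~\ref{prop:Lusztig-formula}. The only difference is that the paper treats the compatibility step as "clearly isomorphic," while you justify it by base change along the Cartesian square of twisted products together with the defining property of $\widetilde{\boxtimes}$ — a valid elaboration (just note the top map is the action morphism $\GK \times^{\GO} \Fl \to \Fl$, not the $m^{\Fl}$ of the $\star^{I^-}$ construction).
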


\begin{proof}
 By Lemma~\ref{lem:convolution-induction} we have
 \[
  \bigl( \pi^* \Pi_\varsigma [\dim \mathscr{B}] \bigr) \star^{I^-} \mathscr{Z}(\cJ_!(\lambda,\bk)) \cong \Pi_{\varsigma}[\dim \mathscr{B}] \star^{\GO} \rInd_{I^-}^{\GO} \bigl( \mathscr{Z}(\cJ_!(\lambda,\bk)) \bigr).
 \]
Using Lemma~\ref{lem:induction-Z}, we deduce an isomorphism
\[
 \bigl( \pi^* \Pi_\varsigma [\dim \mathscr{B}] \bigr) \star^{I^-} \mathscr{Z}(\cJ_!(\lambda,\bk)) \cong \Pi_{\varsigma}[\dim \mathscr{B}] \star^{\GO} \pi^*(\cJ_!(\lambda,\bk)).
\]
Now the right-hand side is clearly isomorphic to $\pi^* \bigl( \Pi_{\varsigma}[\dim \mathscr{B}] \star^{\GO} \cJ_!(\lambda,\bk) \bigr)$, and then the first isomorphism of the proposition follows from Proposition~\ref{prop:Lusztig-formula}. The proof of the second isomorphism is similar.
\end{proof}

The Grothendieck group of the category $\Perv_{I^-_\uu}(\Fl,\bk)$, resp.~$\Perv_{G_\scO}(\Gr,\bk)$, identifies naturally with the (integral) group ring $\Z[W]$ of $W$, resp.~with its center $\Z[X^\vee]^{\Wf}$, and under this isomorphism the right convolution with objects of the form $\mathscr{Z}(-)$ corresponds to the natural multiplication map, see~\cite[\S 0.1]{gaitsgory}. (See also~\cite{ab} for this point of view.) Under these identifications, the isomorphisms of Proposition~\ref{prop:Lusztig-formula-2} are categorical incarnations of the identity~\cite[(6.3)]{lusztig}.

\subsection{Tilting objects in the Satake category}
\label{ss:JMW-conj}


Recall the notion of parity complexes in $\Db_{\IW}(\Gr, \bk)$ considered in Remark~\ref{rmk:parity}.
In certain proofs of this subsection we will also consider the $(I^+_\uu, \chi_{I^+}^*(\cL^\bk_\psi))$-equi\-variant constructible derived category of $\Fl$, which we will denote $\Db_{\IW}(\Fl,\bk)$. 
Of course, we can also consider parity complexes in this category, as well as in the $I^-$-equivariant derived category $\Db_{I^-}(\Fl,\bk)$, or in the $G_\scO$-constructible derived category $\Db_{(G_\scO)}(\Gr, \bk)$. (Note that, by definition, an object of $\Db_{I^-}(\Fl,\bk)$ is parity iff its image in the $I^-$-constructible derived category $\Db_{(I^-)}(\Fl,\bk)$ is parity.) In particular, for any $\lambda \in \bX^\vee_+$, we denote by $\cE_\lambda$ the unique indecomposable parity complex in the category $\Db_{(G_\scO)}(\Gr, \bk)$ supported on $\overline{\Gr^\lambda}$ and whose restriction to $\Gr^\lambda$ is $\underline{\bk}_{\Gr^\lambda}[\dim \Gr^\lambda]$ (see~\cite[Theorem~2.12 and \S 4.1]{jmw}). 

Since $\mathsf{H}^\bullet_{\GO}(\mathrm{pt};\bk)$ might not be concentrated in even degrees, in general the theory of~\cite{jmw} does \emph{not} apply in $\Db_{G_\scO}(\Gr,\bk)$. This difficulty will be remedied by the following lemma.

\begin{lem}
\label{lem:parity-equiv-GO}
Any parity object $\cE$ in $\Db_{(\GO)}(\Gr, \bk)$ is a direct summand of a parity object $\cE'$ which belongs to the essential image of the functor $\For_{\GO} : \Db_{\GO}(\Gr, \bk) \to \Db_{(\GO)}(\Gr, \bk)$.
\end{lem}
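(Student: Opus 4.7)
The plan is to reduce to the case of a single indecomposable parity sheaf by Krull--Schmidt, and then to realize this sheaf as a direct summand of the forgetful image of an explicit $\GO$-equivariant convolution of Schubert varieties.

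Any parity object in $\Db_{(\GO)}(\Gr,\bk)$ is, by~\cite{jmw}, a direct sum of shifts of indecomposable parity complexes; each such indecomposable is supported on the closure of some $\GO$-orbit $\Gr^\lambda$ with $\lambda \in \bX^\vee_+$ and restricts on $\Gr^\lambda$ to a shift of an irreducible local system. It therefore suffices to produce, for each such indecomposable summand $\cF$, a parity object $\cE'$ in the essential image of $\For_{\GO}$ that has $\cF$ as a direct summand.

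For a summand with support $\overline{\Gr^\lambda}$, I would pick a sequence $\vec\mu = (\mu_1, \ldots, \mu_k)$ of dominant cocharacters satisfying $\mu_1 + \cdots + \mu_k = \lambda$ and form the $\GO$-equivariant twisted product
\[
 Y(\vec\mu) := \overline{\Gr^{\mu_1}} \, \widetilde{\times} \, \cdots \, \widetilde{\times} \, \overline{\Gr^{\mu_k}},
\]
together with the proper, $\GO$-equivariant convolution map $m : Y(\vec\mu) \to \overline{\Gr^\lambda}$. The candidate is
\[
 \cE' := m_* \underline{\bk}_{Y(\vec\mu)}[\dim Y(\vec\mu)],
\]
which lies in the essential image of $\For_{\GO}$ by construction (it is the forgetful image of its manifest $\GO$-equivariant lift on $Y(\vec\mu)$). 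The restriction of $\cE'$ to the open stratum $\Gr^\lambda$ decomposes into irreducible local system summands, and by arranging the sequence $\vec\mu$ so that this restriction contains the local system underlying $\cF$, Krull--Schmidt guarantees that $\cF$ occurs as an indecomposable summand of $\cE'$.

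The main obstacle is that $\cE'$ must be parity; equivalently, the iterated convolution of the basic objects $\underline{\bk}_{\overline{\Gr^{\mu_j}}}[\dim]$ must have parity stalks and costalks on every $\GO$-orbit. This demands a careful choice of $\vec\mu$: when $\lambda$ is a sum of minuscule coweights, each $\overline{\Gr^{\mu_j}}$ may be taken smooth, and parity of $\cE'$ follows from a K\"unneth-type computation along the smooth convolution variety $Y(\vec\mu)$; in general one invokes the parity-preservation results for convolutions established in~\cite{mr}.
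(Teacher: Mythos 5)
Your reduction to a single indecomposable $\cE_\lambda$ and the final Krull--Schmidt/support argument are fine, but the construction of $\cE'$ has a genuine gap at exactly the point you flag: the parity of $m_* \underline{\bk}_{Y(\vec\mu)}[\dim Y(\vec\mu)]$. Writing $\lambda$ as a sum of minuscule coweights is not possible in general: minuscule coweights do not exist for types $G_2$, $F_4$, $E_8$ (which satisfy the paper's hypothesis on $\varsigma$ in their adjoint forms), and even when they exist their sums only reach certain cosets of dominant coweights. The standard fix is to allow quasi-minuscule factors, but then $\overline{\Gr^{\mu_j}}$ is singular and $\underline{\bk}_{\overline{\Gr^{\mu_j}}}[\dim]$ is \emph{not} parity for small $\ell$ (this is precisely the source of the characteristic bounds in~\cite{jmw2}); there is no parity-preservation theorem that applies to pushforwards of constant sheaves along such convolution morphisms without characteristic assumptions, and the results of~\cite{mr} you invoke are proved under the assumption that $\ell$ is good. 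Since the whole point of Lemma~\ref{lem:parity-equiv-GO} (and of Theorem~\ref{thm:parity-tilting}) is to handle bad characteristic, where $\cE_\lambda$ need not even be perverse, your fallback does not close the argument in the cases where it is actually needed.

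For comparison, the paper sidesteps this entirely by changing the group one averages over rather than building an explicit resolution: since $\For_{I^-}$ sends indecomposable parity objects to indecomposable parity objects (\cite[Lemma~2.4]{mr}, with no assumption on $\ell$), the object $\cE_\lambda$ lifts to an $I^-$-equivariant parity complex $\cF$, and one sets $\cE' := \For_{\GO} \bigl( \rInd_{I^-}^{\GO}(\cF) \bigr)$. This is manifestly in the image of $\For_{\GO}$, and it is parity by \cite[Theorem~4.8]{jmw} because $\rInd_{I^-}^{\GO}$ is convolution along the smooth proper flag variety $\GO/I^- \cong G/B^-$ (an even proper map), so no choice of $\vec\mu$ and no characteristic restriction is needed; the support argument then yields a shift of $\cE_\lambda$ as a direct summand, just as in your last step. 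If you want to salvage your approach, you would have to replace the twisted products of orbit closures by objects whose parity is known unconditionally, which in effect leads back to an equivariant-lift-plus-induction argument of this kind.
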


\begin{proof}
Of course we can assume that $\cE=\cE_\lambda$ for some $\lambda \in \bX^\vee_+$.
Recall that the forgetful functor $\For_{I^-}$ sends indecomposable parity objects to indecomposable parity objects (see~\cite[Lemma~2.4]{mr}). In view of the classification of such objects in the $I^-$-equivariant and $I^-$-constructible derived categories, this means that any $I^-$-constructible parity complex on $\Gr$ belongs to the essential image of $\For_{I^-}$. In particular, there exists a parity complex $\cF$ in $\Db_{I^-}(\Gr,\bk)$ such that $\cE_\lambda \cong \For_{I^-}(\cF)$. Now we set $\cE':=\For_{\GO}(\rInd_{I^-}^{\GO}(\cF))$. Then $\cE'$ is parity as a convolution of parity complexes, see~\cite[Theorem~4.8]{jmw}. And since this object is supported on $\overline{\Gr^\lambda}$ and has nonzero restriction to $\Gr^\lambda$, it must admit a cohomological shift of $\cE_\lambda$ as a direct summand.
\end{proof}

\begin{rmk}
If $\mathrm{char}(\bk)$ is not a torsion prime for $G$, then $\mathsf{H}^\bullet_{\GO}(\mathrm{pt};\bk)$ \emph{is} concentrated in even degrees; see~\cite[\S 2.6]{jmw}. In this case the parity objects in $\Db_{G_\scO}(\Gr,\bk)$ are well behaved, and one can easily show that in fact any parity object in $\Db_{(\GO)}(\Gr, \bk)$ belongs to the essential image of the functor $\For_{\GO}$.
\end{rmk}


Recall that the forgetful functor $\mathrm{For}_{G_\scO} : \Db_{G_\scO}(\Gr,\bk) \to \Db_{(G_\scO)}(\Gr,\bk)$ restricts to an equivalence between $\GO$-equivariant and $\GO$-constructible perverse sheaves, see~\cite[Proposition~2.1]{mv} (or~\cite[Proposition~1.10.8]{br}). Therefore for any $\cF$ in $\Db_{(G_\scO)}(\Gr, \bk)$ and any $n \in \Z$, the perverse sheaf $\pH^n(\cF)$ is $G_\scO$-equivariant. The main result of this section is the following.

\begin{thm}
\label{thm:parity-tilting}
For any $n \in \mathbb{Z}$ and $\lambda \in \bX^\vee_+$, the $G_\scO$-equivariant perverse sheaf $\pH^n(\cE_\lambda)$ is tilting in the highest weight category $\Perv_{G_\scO}(\Gr, \bk)$. In particular, the indecomposable tilting object associated with $\lambda$ is a direct summand of $\pH^0(\cE_\lambda)$.
\end{thm}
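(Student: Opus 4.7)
The plan is to use the equivalence $\Phi^0$ of Theorem~\ref{thm:equiv} to transfer the question to the Iwahori--Whittaker side, where the constant parity of IW-stratum dimensions on each connected component of $\Gr$ makes the perverse cohomology of parity complexes very well behaved.

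First, by Lemma~\ref{lem:parity-equiv-GO}, $\cE_\lambda$ is a direct summand of $\For_{\GO}(\cE')$ for some $\GO$-equivariant parity complex $\cE' \in \Db_{\GO}(\Gr,\bk)$; since direct summands of tilting objects are tilting, it suffices to prove that $\pH^n(\cE')$ is tilting for all $n$. The exact functor $\Phi^0$ is an equivalence of highest weight categories, since it sends $\cJ_!(\lambda,\bk)$ to $\Delta^\IW_{\lambda+\varsigma}(\bk)$ and $\cJ_*(\lambda,\bk)$ to $\nabla^\IW_{\lambda+\varsigma}(\bk)$ (as established in the proof of Theorem~\ref{thm:equiv}); in particular, it induces a bijection on isomorphism classes of tilting objects. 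By $t$-exactness of $\Phi$ (Lemma~\ref{lem:Phi-exact}),
\[
\Phi^0(\pH^n(\cE')) \cong \pH^n\bigl(\Delta^\IW_\varsigma(\bk) \star^{\GO} \cE'\bigr),
\]
so the task reduces to showing that this perverse sheaf is tilting in $\Perv_\IW(\Gr,\bk)$.

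The object $\Delta^\IW_\varsigma(\bk) \star^{\GO} \cE'$ is a parity complex in $\Db_\IW(\Gr,\bk)$: indeed, $\Delta^\IW_\varsigma(\bk) = \IC^\IW_\varsigma(\bk)$ (see~\eqref{eqn:Delta-nabla-sigma}) is a shifted IW-equivariant local system on the closed orbit $X_\varsigma$, hence parity, and convolution of parity complexes is parity by the standard argument of~\cite[Theorem~4.8]{jmw}, suitably adapted to the Iwahori--Whittaker setting. The main step is then to prove, by induction on the number of IW-orbits in $\mathrm{supp}(\cF)$, that for any parity complex $\cF \in \Db_\IW(\Gr,\bk)$ each $\pH^n(\cF)$ is tilting in $\Perv_\IW(\Gr,\bk)$. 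For the inductive step, take $X_\mu$ open in $\mathrm{supp}(\cF)$, with embeddings $j \colon X_\mu \hookrightarrow \overline{\mathrm{supp}(\cF)}$ and closed complement embedding $i$; smoothness of $X_\mu$ forces $j^*\cF$ to be a direct sum of shifts of the constant IW-local system on $X_\mu$, all of the same cohomological parity, so that $j_!j^*\cF$ is a direct sum of shifts of $\Delta^\IW_\mu(\bk)$, and $j_*j^*\cF$ a direct sum of shifts of $\nabla^\IW_\mu(\bk)$, all of the same parity. Using the fact (established in tandem with the tilting claim by the same induction, and ultimately relying on the constant-parity feature of IW-stratum dimensions on each connected component of $\Gr$, cf.~\eqref{eqn:dim-Xlambda} and the proof of Corollary~\ref{cor:Perv-IW}) that the perverse cohomology of an IW-parity complex vanishes in degrees of the ``wrong'' parity, the long exact sequences of perverse cohomology associated with
\[
j_!j^*\cF \to \cF \to i_*i^*\cF \xrightarrow{+1} \quad \text{and} \quad i_*i^!\cF \to \cF \to j_*j^*\cF \xrightarrow{+1}
\]
collapse to short exact sequences that exhibit standard, respectively costandard, filtrations of $\pH^n(\cF)$ (using that $\pH^n(i_*i^*\cF)$, resp. $\pH^n(i_*i^!\cF)$, is tilting by induction, and that tilting-filtered objects are closed under extensions).

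For the ``in particular'' statement, $\cE_\lambda|_{\Gr^\lambda} = \underline{\bk}_{\Gr^\lambda}[\dim \Gr^\lambda]$ is already perverse, so $\pH^0(\cE_\lambda)|_{\Gr^\lambda}$ equals this perverse sheaf; hence $\cJ_!(\lambda,\bk)$ appears with multiplicity one in any standard filtration of $\pH^0(\cE_\lambda)$, and by Krull--Schmidt the indecomposable tilting object of highest weight $\lambda$ is a direct summand of $\pH^0(\cE_\lambda)$. The principal obstacle in this plan is the inductive lemma on perverse cohomology of IW-parity complexes: one has to simultaneously control the cohomological parity of stalks and the perverse-cohomological parity, a compatibility that hinges essentially on the constant parity of IW-orbit dimensions on each connected component of $\Gr$.
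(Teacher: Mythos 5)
Your reduction steps are the same as the paper's: Lemma~\ref{lem:parity-equiv-GO} to replace $\cE_\lambda$ by a $\GO$-equivariant parity complex $\cE'$, the t-exactness of $\Phi$ and the fact that $\Phi^0$ is an equivalence of highest weight categories, and the parity of $\Delta^\IW_\varsigma(\bk) \star^{\GO} \cE'$ in $\Db_{\IW}(\Gr,\bk)$. On this last point your one-line appeal to ``JMW Theorem~4.8 suitably adapted'' hides a real subtlety that the paper addresses explicitly: since $\mathsf{H}^\bullet_{\GO}(\mathrm{pt};\bk)$ need not be even, the convolution $\star^{\GO}$ is handled by passing through $\Fl$ (Lemmas~\ref{lem:pi-parity} and~\ref{lem:convolution-parity}, Corollary~\ref{cor:Phi-parity}); the statement you need is true, but this is where the work is. The genuine divergence is at the key step ``parity in $\Db_{\IW}(\Gr,\bk)$ implies tilting perverse cohomology.'' The paper deduces it from Proposition~\ref{prop:parity-IW-Gr}: tilting perverse sheaves are parity (because $\Delta^\IW_\mu(\bk)$ and $\nabla^\IW_\mu(\bk)$ are clean extensions and the relevant strata have dimensions of constant parity on each component), so by uniqueness of indecomposable parity complexes the indecomposable tiltings \emph{are} the parity sheaves, and every parity complex is a direct sum of shifts of tiltings. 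You instead propose a recollement induction.

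As written, that induction has a gap: you invoke the inductive hypothesis for $i_*i^*\cF$ and $i_*i^!\cF$, but these are in general \emph{not} parity complexes --- $i^*$ preserves the stalk (``$*$-parity'') condition but not the costalk condition, and dually for $i^!$ --- so ``$\pH^n(i_*i^*\cF)$ is tilting by induction'' is not available. The standard repair is to decouple the two halves of the statement: prove by induction on the support that a $*$-even (resp.\ $*$-odd) IW-complex has perverse cohomology concentrated in degrees of a single parity, each object admitting a standard filtration; here $i^*\cF$ does inherit $*$-parity, $j^*\cF$ splits into shifts of the IW local system by equivariance of the orbit (homogeneity, not merely smoothness of $X_\mu$, is what forces the splitting), and your first triangle then collapses to the desired short exact sequences; dually, $!$-parity yields costandard filtrations via the second triangle. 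A parity complex satisfies both conditions, so each $\pH^n$ is tilting, and the rest of your argument (including the multiplicity-one/Krull--Schmidt treatment of the ``in particular'' clause, which matches the paper's) goes through. With this fix your route is a correct, more hands-on alternative to Proposition~\ref{prop:parity-IW-Gr}; the paper's classification argument is shorter and gives the stronger conclusion that every parity object of $\Db_{\IW}(\Gr,\bk)$ is already a direct sum of cohomological shifts of tilting perverse sheaves.
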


\begin{rmk}\phantomsection
\label{rmk:parities}
\begin{enumerate}
\item
Theorem~\ref{thm:parity-tilting} was stated as a conjecture (in the case $n=0$) in~\cite{jmw2}.
\item
If $\mathrm{char}(\bk)$ is good for $G$, it is known that the objects $\cE_\lambda$ are actually perverse, see~\cite{mr}. (This property was proved earlier in~\cite{jmw2} under slightly stronger assumptions; it is known to be false in bad characteristic, see~\cite{jmw2}.) Hence in Theorem~\ref{thm:parity-tilting} we in fact know that the indecomposable tilting object associated with $\lambda$ is $\pH^0(\cE_\lambda)=\cE_\lambda$.
In general, it seems natural to expect that $\pH^0(\cE_\lambda)$ is indecomposable; but we do not have a proof of (or strong evidence for) this fact.
\item
Since our proof of Theorem~\ref{thm:parity-tilting} relies on Theorem~\ref{thm:equiv}, we have stated it with the same assumptions on $G$. However, once it is known in this generality standard arguments allow to extend its validity to general connected reductive groups; see e.g.~\cite[\S 3.4]{jmw2} for details. Similarly, the analogous claim in the setting of the classical topology on the complex counterpart of $\Gr$ follows from its \'etale version using the general considerations of~\cite[\S 6.1]{bbd}.
\end{enumerate}
\end{rmk}

%

The proof of Theorem~\ref{thm:parity-tilting} requires a few preliminaries. We start with the following observation, which will be crucial for us.

\begin{prop}
\label{prop:parity-IW-Gr}
The parity objects in $\Db_{\IW}(\Gr, \bk)$ are exactly the direct sums of cohomological shifts of tilting perverse sheaves.
\end{prop}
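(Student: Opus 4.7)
My plan is to prove both containments in the asserted equality: every tilting perverse sheaf is a parity complex, and conversely every indecomposable parity complex is, up to cohomological shift, an indecomposable tilting perverse sheaf. Given both, Krull--Schmidt on each side finishes the argument.

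First I will show that the standard objects $\Delta^\IW_\lambda(\bk)$ and costandard objects $\nabla^\IW_\lambda(\bk)$ are parity. The essential input is the observation already exploited in the proof of Corollary~\ref{cor:Perv-IW} that, within each connected component of $\Gr$, the orbit dimensions $\dim X_\mu = \langle \mu, 2\rho\rangle$ have constant parity as $\mu$ varies in $\bX^\vee_{++}$. Moreover, by Lemma~\ref{lem:orbits-IW} together with IW-equivariance, every IW sheaf is set-theoretically supported on $\bigsqcup_{\mu \in \bX^\vee_{++}} X_\mu$, so we only need to check stalks and costalks at these strata. For $\Delta^\IW_\lambda(\bk)$, the stalks $i_\mu^*$ are either zero ($\mu \ne \lambda$) or a shifted local system on $X_\lambda$ concentrated in a single degree, hence of the correct parity; the costalks $i_\mu^!$ for $\mu \prec \lambda$ are, via Verdier duality (which exchanges $\Delta$ with $\nabla$ and $\psi$ with $\psi^{-1}$, cf.~the comment before Lemma~\ref{lem:Phi-exact}), controlled by the stalks of $\nabla^\IW_\lambda(\bk)$, and these in turn can be computed using the affine-bundle structure $p_\lambda: \Gr^\lambda \to G/B^-$ together with the closure relations for $\GO$-orbits in $\Gr$. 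The constant-parity property forces the resulting cohomology into a single parity class relative to $\dim X_\mu$. Symmetric considerations handle the costandards. Once this is established, parity being preserved under extensions (an application of the methods of~\cite{jmw}, valid in our IW setting as recalled in Remark~\ref{rmk:parity}) implies that any perverse sheaf with a standard or costandard filtration is parity; in particular tilting perverse sheaves are parity.

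For the converse direction, I invoke the classification of parity sheaves of \cite{jmw}: up to shift, indecomposable parity complexes in $\Db_\IW(\Gr,\bk)$ are in bijection with $\bX^\vee_{++}$, each $\cE^\IW_\lambda$ being uniquely determined by having support $\overline{X_\lambda}$ and satisfying $i_\lambda^*\cE^\IW_\lambda \cong \cL_\lambda[\dim X_\lambda]$. For each $\lambda \in \bX^\vee_{++}$, the indecomposable tilting object $T^\IW_\lambda$ of the highest weight category $\Perv_\IW(\Gr,\bk)$ exists by Corollary~\ref{cor:Perv-IW}, has support $\overline{X_\lambda}$, and its restriction to the open stratum $X_\lambda$ agrees with that of $\Delta^\IW_\lambda(\bk)$, namely $\cL_\lambda[\dim X_\lambda]$. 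By the first half of the argument $T^\IW_\lambda$ is parity, so the uniqueness part of the classification forces $T^\IW_\lambda \cong \cE^\IW_\lambda$. Combined with Krull--Schmidt, this shows that direct sums of cohomological shifts of tilting perverse sheaves exhaust all parity complexes.

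The main obstacle is the parity verification for standards and costandards, specifically the calculation of $i_\mu^! \Delta^\IW_\lambda(\bk)$ at strata $\mu \prec \lambda$. Everything else is then a formal combination of Krull--Schmidt, the JMW classification, and the highest weight structure of $\Perv_\IW(\Gr,\bk)$ established earlier in the paper.
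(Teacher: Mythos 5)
There is a genuine gap in your first containment. The claim that the standard and costandard objects $\Delta^\IW_\lambda(\bk)$ and $\nabla^\IW_\lambda(\bk)$ are parity is false when $\mathrm{char}(\bk)>0$: if it were true, then (since these objects are indecomposable, supported on $\overline{X_\lambda}$, with restriction to $X_\lambda$ the shifted rank-one local system) the uniqueness part of the classification of \cite{jmw} that you invoke later would force $\Delta^\IW_\lambda(\bk) \cong \cE^{\IW}_\lambda \cong \nabla^\IW_\lambda(\bk)$ for every $\lambda \in \bX^\vee_{++}$, i.e.\ cleanness of the objects $\IC^\IW_\lambda(\bk)$ and semisimplicity of $\Perv_{\IW}(\Gr,\bk)$; via Theorem~\ref{thm:equiv} and the Satake equivalence this would make $\Rep(G^\vee_\bk)$ semisimple, which fails in positive characteristic. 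Cleanness is precisely the characteristic-$0$ phenomenon of Remark~\ref{rmk:Hom-J!-J*-IW}\eqref{it:J!-J*-IW-simple}. Concretely, your computation of $i_\mu^!\Delta^\IW_\lambda(\bk)$ for $\mu \prec \lambda$ via the affine-bundle structure cannot produce the asserted parity concentration: these costalks are governed by torsion that appears in adjacent cohomological degrees, with no parity constraint. For the same reason the intermediate assertion ``any perverse sheaf with a standard \emph{or} costandard filtration is parity'' is false: a standard filtration only controls the $*$-restrictions $i_\mu^*$ (these vanish off the relevant stratum and sit in the single degree $-\dim X_\mu$, by the constant-parity observation from Corollary~\ref{cor:Perv-IW}), while a costandard filtration only controls the $!$-corestrictions $i_\mu^!$.

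The repair is to use \emph{both} filtrations at once, which is what the paper's (terse) proof does: a tilting object admits a standard filtration, giving parity of all $i_\mu^*$, \emph{and} a costandard filtration, giving parity of all $i_\mu^!$, and the constancy of the parity of $\dim X_\mu$ on each connected component then shows every tilting object is parity; no parity statement about individual standard or costandard objects is needed (or true). Your second half — identifying the indecomposable tilting object $T^\IW_\lambda$ with the normalized indecomposable parity complex $\cE^{\IW}_\lambda$ by support, generic restriction and the uniqueness in \cite{jmw}, and then concluding with Krull--Schmidt — coincides with the paper's ``by unicity'' step and is correct once the first containment is established as above.
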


\begin{proof}
As already noticed in the proof of Corollary~\ref{cor:Perv-IW},
the strata $X_\lambda \subset \Gr$ supporting $(I^+_\uu, \chi_{I^+}^*(\cL^\bk_\psi))$-equivariant local systems (i.e.~those with $\lambda \in \bX^\vee_{++}$) have dimensions of constant parity on each connected component of $\Gr$. Therefore, the tilting objects in the highest weight category $\Perv_\IW(\Gr, \bk)$ are also parity. By unicity, they must then coincide with the ``parity sheaves'' (or, in another terminology, normalized indecomposable parity complexes) of~\cite[Definition~2.14]{jmw}. The claim follows, since any parity complex is a direct sum of cohomological shifts of such objects.
\end{proof}

Next we observe that the parity property is preserved under convolution, in the following sense.

\begin{lem}
\label{lem:pi-parity}
 If $\cF \in \Db_{\IW}(\Fl,\bk)$ and $\cG \in \Db_{I^-}(\Fl,\bk)$ are parity complexes, then $\cF \star^{I^-} \cG \in \Db_{\IW}(\Fl,\bk)$ is a parity complex.
\end{lem}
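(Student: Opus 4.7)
The plan is to mimic the proof of \cite[Theorem~4.8]{jmw}, which establishes the same statement in the purely $I^-$-equivariant setting, and to observe that the argument depends only on the \emph{right} equivariance of the second factor. The $\IW$-equivariance of $\cF$ is merely carried along for the ride.

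First I would reduce to the case of a single ``simple'' factor on the right. Every indecomposable $I^-$-equivariant parity complex on $\Fl$ is (by the standard classification of parity sheaves and the usual Bott--Samelson construction on $\Fl$) a direct summand of an iterated convolution
\[
\xi_{s_1} \star^{I^-} \xi_{s_2} \star^{I^-} \cdots \star^{I^-} \xi_{s_k},
\]
where each $\xi_{s} := \underline{\bk}_{\overline{\Fl_s}}[1]$ is the constant perverse sheaf on the closure $\overline{\Fl_s} \cong \mathbb{P}^1$ of the one-dimensional $I^-$-orbit associated with a simple affine reflection $s$. By associativity of $\star^{I^-}$ (combined with Lemma~\ref{lem:convolution-induction}-style compatibilities) and induction on $k$, the claim reduces to the following: if $\cF \in \Db_\IW(\Fl,\bk)$ is parity, then so is $\cF \star^{I^-} \xi_s$ for every simple affine reflection $s$.

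Second I would give a concrete geometric description of this convolution. Let $P_s \supset I^-$ be the parahoric subgroup attached to $s$, set $\Fl_s := G_\scK/P_s$, and let $\pi_s : \Fl \to \Fl_s$ be the projection, which is a smooth, proper, Zariski-locally trivial $\mathbb{P}^1$-fibration. The natural map
\[
G_\scK \times^{I^-} \overline{\Fl_s} \;\xrightarrow{\sim}\; \Fl \times_{\Fl_s} \Fl, \qquad (g, pI^-) \longmapsto (gI^-, gpI^-),
\]
is an isomorphism identifying the restriction $m^\Fl_{|G_\scK \times^{I^-} \overline{\Fl_s}}$ with the second projection $q_2$ and the projection $(g,pI^-) \mapsto gI^-$ with the first projection $q_1$. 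Since $\xi_s$ is (up to shift) the constant sheaf on a fiber of $\pi_s$, the descent $\cF \, \widetilde{\boxtimes} \, \xi_s$ along this isomorphism is canonically identified with $q_1^* \cF[1]$. Proper smooth base change then yields a canonical isomorphism
\[
\cF \star^{I^-} \xi_s \;\cong\; (q_2)_* q_1^* \cF[1] \;\cong\; \pi_s^*(\pi_s)_* \cF\,[1]
\]
in $\Db_\IW(\Fl,\bk)$.

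Third I would conclude by parity preservation. The functor $\pi_s^*$ preserves parity because $\pi_s$ is smooth, and $(\pi_s)_*$ preserves parity because $\pi_s$ is a proper Zariski-locally trivial $\mathbb{P}^1$-bundle and the cohomology of $\mathbb{P}^1$ is concentrated in even degrees; this is a direct transcription of the argument of \cite[Proposition~4.10]{jmw}, which (as noted in the footnote of Remark~\ref{rmk:parity}) applies verbatim to the $\IW$-equivariant derived category. The only genuine technical point is the identification of Step~2, ensuring that the formalism of convolution on $\Fl$ really does factor through the fiber-product description; once this is granted, parity preservation is immediate.
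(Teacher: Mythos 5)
Your argument is essentially the paper's own proof written out in detail: the paper reduces to $\cG = \underline{\bk}_{\overline{\Fl_w}}$ with $\ell(w) \in \{0,1\}$ and treats that case ``by hand'' via the classical Springer-type push--pull argument, which is exactly your identification $\cF \star^{I^-} \xi_s \cong \pi_s^*(\pi_s)_*\cF[1]$ together with parity preservation for the $\mathbb{P}^1$-fibration $\pi_s$. One small correction: since $W$ here is the \emph{extended} affine Weyl group, the subgroup $\Omega$ of length-zero elements may be nontrivial and $\Fl$ may have several components, so your reduction claim that every indecomposable $I^-$-equivariant parity complex is a summand of $\xi_{s_1} \star^{I^-} \cdots \star^{I^-} \xi_{s_k}$ is not quite true as stated; you must also allow right convolution with the skyscraper sheaves on the length-zero orbits (this is why the paper includes $\ell(w)=0$), which is harmless since such convolutions are translation automorphisms of $\Fl$ and obviously preserve parity.
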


\begin{proof}
 In view of the description of parity complexes in~\cite[\S 4.1]{jmw}, the claim follows from standard arguments going back at least to~\cite{springer}. In fact it suffices to treat the case $\cG= \underline{\bk}_{\overline{\Fl_w}}$ when $\ell(w) \in \{0,1\}$, which can be done ``by hand'' as in~\cite{springer}.
\end{proof}

\begin{lem}
\label{lem:convolution-parity}
If $\cF \in \Db_{\IW}(\Gr,\bk)$ is parity and $\cG \in \Db_{G_\scO}(\Gr,\bk)$ is such that $\mathrm{For}_{G_{\scO}}(\cG)$ is parity, then $\cF \star^{G_\scO} \cG \in \Db_{\IW}(\Gr,\bk)$ is parity.
\end{lem}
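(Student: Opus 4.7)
The plan is to reduce the claim to Lemma~\ref{lem:pi-parity} by lifting to $\Fl$ via the smooth pullback $\pi^*$, and then to descend to $\Gr$ using Lemma~\ref{lem:convolution-For}. The key arithmetic observation is that $\mathsf{H}^\bullet(\mathscr{B}; \bk)$ is free and concentrated in even degrees (as $\mathscr{B}$ has a Bruhat decomposition into even-dimensional affine cells), so that tensoring with $R\Gamma(\mathscr{B}; \bk)$ both preserves \emph{and} reflects the parity property. First I would pull back both factors to $\Fl$: since $\pi$ is smooth, $\pi^*(\cF) \in \Db_\IW(\Fl,\bk)$ is IW-parity and $\widetilde{\cG} := \pi^*(\For^\GO_{I^-}(\cG)) \in \Db_{I^-}(\Fl, \bk)$ is $I^-$-parity (the underlying constructible complex is $\pi^*$ of the parity complex $\For_\GO(\cG)$, hence still parity). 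Lemma~\ref{lem:pi-parity} then gives that $\pi^*(\cF) \star^{I^-} \widetilde{\cG}$ is IW-parity in $\Db_\IW(\Fl, \bk)$.

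Next I would push this down to $\Gr$ via $\pi_*$. A base-change computation, identifying a cartesian square between $\GK \times^{I^-} \Fl$ and $\GK \times^{I^-} \Gr$ with vertical maps induced by $\pi$ on the second factor, yields the compatibility
\[
\pi_*\bigl( \pi^*(\cF) \star^{I^-} \widetilde{\cG} \bigr) \cong \pi^*(\cF) \star^{I^-} \pi_*(\widetilde{\cG}),
\]
where on the right we use the bifunctor $\Db_\IW(\Fl, \bk) \times \Db_{I^-}(\Gr, \bk) \to \Db_\IW(\Gr, \bk)$. Since $\pi$ is proper the left-hand side is IW-parity on $\Gr$; and by the projection formula $\pi_*(\widetilde{\cG}) \cong \For^\GO_{I^-}(\cG) \otimes_\bk R\Gamma(\mathscr{B}; \bk)$, which is a direct sum of even-degree shifts of $\For^\GO_{I^-}(\cG)$. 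Consequently $\pi^*(\cF) \star^{I^-} \For^\GO_{I^-}(\cG)$ is a direct summand of an IW-parity complex, hence is itself IW-parity on $\Gr$.

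Finally, Lemma~\ref{lem:convolution-For} (in its Iwahori--Whittaker variant, which is immediate from the reduction of IW-equivariance to $I_\circ$-equivariance plus a character) gives
\[
\pi^*(\cF) \star^{I^-} \For^\GO_{I^-}(\cG) \cong \pi_* \pi^*(\cF) \star^\GO \cG \cong \bigl( \cF \otimes_\bk R\Gamma(\mathscr{B}; \bk) \bigr) \star^\GO \cG,
\]
which is again a direct sum of even-shifted copies of $\cF \star^\GO \cG$; the parity of the left-hand side therefore forces $\cF \star^\GO \cG$ itself to be IW-parity on $\Gr$. The step I expect to be the main obstacle is the push-pull compatibility used in the second paragraph: it requires identifying the relevant cartesian square between the convolution spaces on $\Fl$ and on $\Gr$ and verifying that the twisted external products descend naturally through $\pi_*$. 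Everything else is routine.
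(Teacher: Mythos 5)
Your argument is correct and uses the same ingredients as the paper's proof (Lemma~\ref{lem:convolution-For}, Lemma~\ref{lem:pi-parity}, and the projection formula for $\pi$), but it transports parity back to $\Gr$ in the opposite direction. The paper first writes $\cF$ as a direct summand of $\pi_*\pi^*\cF$, so that by Lemma~\ref{lem:convolution-For} it suffices to show that the object $\pi^*(\cF) \star^{I^-} \For^{\GO}_{I^-}(\cG)$ of $\Db_{\IW}(\Gr,\bk)$ is parity, and it checks this by applying $\pi^*$ once more: the pullback to $\Fl$ is a convolution of two parity complexes (Lemma~\ref{lem:pi-parity}), and parity is detected by the smooth pullback $\pi^*$. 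This bypasses the two points your route needs: the push--pull compatibility $\pi_*\bigl(\pi^*(\cF)\star^{I^-}\widetilde{\cG}\bigr) \cong \pi^*(\cF)\star^{I^-}\pi_*(\widetilde{\cG})$ (which is fine --- the relevant square of convolution spaces is indeed cartesian, as you anticipate, so this is proper base change plus descent of the twisted product), and, more seriously, the assertion that the left-hand side is parity ``since $\pi$ is proper.'' Properness alone does not make $\pi_*$ preserve parity; you need $\pi$ to be a proper \emph{even} morphism, i.e.\ precisely the fact you record at the start, that the fibers $\mathscr{B}$ are paved by even-dimensional affine cells with free even cohomology, compatibly with the stratifications (see~\cite[\S 4.2]{jmw}). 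With that justification made explicit your route works, and your final step (identifying $\pi^*(\cF)\star^{I^-}\For^{\GO}_{I^-}(\cG)$ with $(\cF\otimes_\bk R\Gamma(\mathscr{B};\bk))\star^{\GO}\cG$ and extracting $\cF\star^{\GO}\cG$ as a direct summand) is a harmless variant of the paper's opening move; the paper's version is just leaner because checking parity after the smooth pullback $\pi^*$ avoids invoking parity-preservation under $\pi_*$ altogether.
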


\begin{proof}
The natural projection $\pi : \Fl \to \Gr$ (a smooth and projective morphism) is $I_\uu^+$-equivariant, hence induces functors
\[
 \pi^* : \Db_{\IW}(\Gr, \bk) \to \Db_{\IW}(\Fl, \bk), \quad \pi_* : \Db_{\IW}(\Fl, \bk) \to \Db_{\IW}(\Gr, \bk).
\]
The projection formula shows that $\cF$ is a direct summand in $\pi_*(\pi^* \cF)$, and by Lemma~\ref{lem:convolution-For} we have
\[
\pi_*(\pi^* \cF) \star^{G_\scO} \cG \cong \pi^*(\cF) \star^{I^-} \For^{\GO}_{I^-} (\cG).
\]
Hence to conclude it suffices to prove that $\pi^*(\cF) \star^{I^-} \For^{\GO}_{I^-}(\cG)$ is parity.
However we have
\[
\pi^* \bigl( \pi^*(\cF) \star^{I^-} \For^{\GO}_{I^-}(\cG) \bigr) \cong \pi^*(\cF) \star^{I^-} \pi^*(\For^{\GO}_{I^-}(\cG)).
\]
Since $\pi^* \cF$ and $\pi^*(\For^{\GO}_{I^-}(\cG))$ are parity (because $\pi$ is smooth),
Lemma~\ref{lem:pi-parity} implies that $\pi^* \bigl( \pi^*(\cF) \star^{I^-} \For^{\GO}_{I^-}(\cG) \bigr)$ is parity. We deduce that $\pi^*(\cF) \star^{I^-} \For^{\GO}_{I^-}(\cG)$ is parity, as expected.
\end{proof}

\begin{cor}
\label{cor:Phi-parity}
Let $\cE$ be in $\Db_{G_\scO}(\Gr, \bk)$, and assume that $\mathrm{For}_{G_\scO}(\cE)$ is parity. Then $\Phi(\cE)$ is parity in $\Db_{\IW}(\Gr, \bk)$. In particular, $\Phi^0(\pH^n(\cE))$ is a tilting perverse sheaf for any $n \in \Z$.
\end{cor}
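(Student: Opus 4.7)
The plan is to establish parity of $\Phi(\cE)$ by invoking Lemma~\ref{lem:convolution-parity}, and then to extract the tilting assertion from Proposition~\ref{prop:parity-IW-Gr} combined with the t-exactness of $\Phi$ recorded in Lemma~\ref{lem:Phi-exact}.

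First I would check that $\Delta_\varsigma^\IW(\bk)$ is a parity complex in $\Db_\IW(\Gr,\bk)$. By~\eqref{eqn:Delta-nabla-sigma} this object equals $\nabla_\varsigma^\IW(\bk) = \IC_\varsigma^\IW(\bk)$; in particular its $!$- and $*$-restrictions to any stratum distinct from $X_\varsigma$ both vanish, while on $X_\varsigma$ itself it is a single shifted rank-$1$ local system. This ``cleanness'' trivially implies the parity condition in the sense of Remark~\ref{rmk:parity}.

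Next, since $\Phi(\cE) = \Delta_\varsigma^\IW(\bk) \star^{\GO} \cE$ by definition and $\For_{\GO}(\cE)$ is parity by hypothesis, Lemma~\ref{lem:convolution-parity} immediately yields that $\Phi(\cE)$ is a parity complex in $\Db_\IW(\Gr,\bk)$. Proposition~\ref{prop:parity-IW-Gr} then produces a decomposition
\[
 \Phi(\cE) \cong \bigoplus_{i} \cT_i[k_i]
\]
with each $\cT_i$ a tilting object of $\Perv_\IW(\Gr,\bk)$. For the second assertion, any t-exact triangulated functor commutes with the truncation functors for the perverse t-structures and hence with the perverse cohomology functors; applied to $\Phi$ this gives $\Phi^0(\pH^n(\cE)) \cong \pH^n(\Phi(\cE)) \cong \bigoplus_{i \,:\, k_i = -n} \cT_i$, a direct sum of tilting objects and thus itself tilting.

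I anticipate no real obstacle: the corollary is essentially a formal concatenation of Lemmas~\ref{lem:Phi-exact} and~\ref{lem:convolution-parity} with Proposition~\ref{prop:parity-IW-Gr}, once one records the (clean, hence) parity nature of the seed object $\Delta_\varsigma^\IW(\bk)$. The only point deserving a line of care is the identity $\Phi^0 \circ \pH^n = \pH^n \circ \Phi$, which is a general property of any t-exact functor between triangulated categories equipped with t-structures.
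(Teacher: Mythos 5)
Your argument is correct and coincides with the paper's own proof: parity of the seed object $\Delta^\IW_\varsigma(\bk)$ via~\eqref{eqn:Delta-nabla-sigma}, then Lemma~\ref{lem:convolution-parity} for parity of $\Phi(\cE)$, then t-exactness of $\Phi$ (Lemma~\ref{lem:Phi-exact}) together with Proposition~\ref{prop:parity-IW-Gr} for the tilting claim. The only difference is that you spell out the cleanness/parity of $\Delta^\IW_\varsigma(\bk)$ and the compatibility $\Phi^0 \circ \pH^n \cong \pH^n \circ \Phi$, which the paper leaves implicit.
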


\begin{proof}
Since $\Delta^\IW_{\varsigma}(\bk)$ is parity (see~\eqref{eqn:Delta-nabla-sigma}), the first claim follows from Lemma~\ref{lem:convolution-parity}. The second claim follows from the facts that $\Phi$ is t-exact and that the perverse cohomology objects of parity objects in $\Db_{\IW}(\Gr, \bk)$ are tilting perverse sheaves, see Proposition~\ref{prop:parity-IW-Gr}.
\end{proof}

We can finally give the proof of Theorem~\ref{thm:parity-tilting}.

\begin{proof}[Proof of Theorem~{\rm \ref{thm:parity-tilting}}]
Since $\Phi^0$ is an equivalence of highest weight categories, to prove the first claim it suffices to prove that $\Phi^0(\pH^n(\cE_\lambda))$ is tilting in the highest weight category $\Perv_\IW(\Gr,\bk)$. This follows from Lemma~\ref{lem:parity-equiv-GO} and Corollary~\ref{cor:Phi-parity}, since a direct summand of a tilting perverse sheaf is tilting.
%
The second claim follows, since $\pH^0(\cE_\lambda)$ is supported on $\overline{\Gr^\lambda}$, and has nonzero restriction to $\Gr^\lambda$.
\end{proof}

\subsection{Convolution and restriction of tilting objects}

In this subsection we will consider the affine Grassmannian for several reductive groups, so we write $\Gr_G$ instead of $\Gr$.
For $P \subset G$ a parabolic subgroup containing $B^+$, with Levi subgroup containing $T$ denoted $L$, we denote by
\[
\mathsf{R}^G_L : \Db_{G_\scO}(\Gr_G,\bk) \to \Db_{L_\scO}(\Gr_L,\bk)
\]
the ``renormalized'' hyperbolic localization functor defined as follows. The connected components of $\Gr_L$ are in a canonical bijection with $\bX^\vee / \Z \Delta^\vee_L$, where $\Delta^\vee_L$ is the coroot system of $(L,T)$; the connected component associated with $c$ will be denoted $\Gr_L^c$. We denote by $U_P^+$ the unipotent radical of $P$. Then for $c \in \bX^\vee / \Z \Delta^\vee_L$ we consider the subvariety
\[
S_c := (U_P^+)_{\scK} \cdot \Gr_L^c
\]
of $\Gr_G$. We denote the natural maps by
\[
\xymatrix{
\Gr_G & S_c \ar[l]_-{s_c} \ar[r]^-{\sigma_c} & \Gr_L^c
}.
\]
Then if $\Delta_L \subset \Delta$ is the root system of $(L,T)$, the functor $\mathsf{R}^G_L$ is defined as
\[
\bigoplus_{c \in \bX^\vee / \Z \Delta^\vee_L} (\sigma_c)_! (s_c)^* [-\langle \sum_{\alpha \in \Delta^+ \smallsetminus \Delta_L} \alpha ,c\rangle].
\]
By work of Be{\u\i}linson--Drinfeld~\cite{bd} this functor is known to be exact for the perverse t-structures; see~\cite[Lem\-ma~1.15.1]{br} for a more detailed proof.

As a consequence of Theorem~\ref{thm:parity-tilting} (and its proof) we obtain the following result, which is a geometric version of a celebrated result on tilting modules due in full generality to Mathieu~\cite{mathieu}. (See~\cite[\S 1.1]{jmw2} for more historical remarks and references on this result). In fact (as noted in the introduction), reasoning in the opposite direction, combining this result with the geometric Satake equivalence, our work can also be considered as providing a new proof of this representation-theoretic result.

\begin{thm}
\label{thm:conv-tilting}
\begin{enumerate}
\item
\label{it:conv-tilting-1}
If $\cF, \cG$ are tilting objects in $\Perv_{G_\scO}(\Gr, \bk)$, then so is $\cF \star^{G_\scO} \cG$.
\item
\label{it:conv-tilting-2}
If $\cF$ is a tilting object in $\Perv_{G_\scO}(\Gr_G,\bk)$, then $\mathsf{R}^G_L(\cF)$ is a tilting object in $\Perv_{L_\scO}(\Gr_L,\bk)$.
\end{enumerate}
\end{thm}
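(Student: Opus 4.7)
The plan is to use the equivalence $\Phi^0$ of Theorem~\ref{thm:equiv} to transfer both statements to the Iwahori--Whittaker side, where by Proposition~\ref{prop:parity-IW-Gr} tilting objects coincide with parity complexes. By associativity of $\star^{\GO}$, $\Phi^0(\cF \star^{\GO} \cG) \cong \Phi^0(\cF) \star^{\GO} \cG$ (both sides being perverse thanks to Lemma~\ref{lem:convolution-exact}); since $\Phi^0$ is an equivalence of highest weight categories, part~\eqref{it:conv-tilting-1} reduces to showing that $\Phi^0(\cF) \star^{\GO} \cG$ is tilting in $\Perv_\IW(\Gr,\bk)$, equivalently parity in $\Db_\IW(\Gr,\bk)$.

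The left factor $\Phi^0(\cF)$ is already parity, but the underlying complex of $\cG$ need not be parity even though $\cG$ is tilting. To get around this, following the proof of Theorem~\ref{thm:parity-tilting}, I would use Lemma~\ref{lem:parity-equiv-GO} to produce $\tilde\cG \in \Db_{\GO}(\Gr,\bk)$ with $\For_{\GO}(\tilde\cG)$ parity and with $\cG$ a direct summand of $\pH^0(\tilde\cG)$. Lemma~\ref{lem:convolution-parity} then yields that $\Phi^0(\cF) \star^{\GO} \tilde\cG$ is parity in $\Db_\IW(\Gr,\bk)$. The key supplementary observation is that convolution on the left with a perverse sheaf is $t$-exact as an endofunctor of $\Db_{\GO}(\Gr,\bk)$ (or of $\Db_\IW(\Gr,\bk)$): Lemma~\ref{lem:convolution-exact} says it preserves the perverse heart, and triangulated-ness combined with the standard perverse truncation filtration argument upgrades this to t-exactness, so $\pH^n$ commutes with the convolution. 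In particular,
\[
\pH^0\bigl(\Phi^0(\cF) \star^{\GO} \tilde\cG\bigr) \cong \Phi^0(\cF) \star^{\GO} \pH^0(\tilde\cG)
\]
is tilting in $\Perv_\IW(\Gr,\bk)$ as the zeroth perverse cohomology of a parity complex. Transferring back through $\Phi^0$, $\cF \star^{\GO} \pH^0(\tilde\cG)$ is tilting in $\Perv_{\GO}(\Gr,\bk)$, and therefore so is its direct summand $\cF \star^{\GO} \cG$.

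For part~\eqref{it:conv-tilting-2} the same lifting philosophy applies more directly: pick $\tilde\cE \in \Db_{\GO}(\Gr_G,\bk)$ with $\For_{\GO}(\tilde\cE)$ parity and with $\cF$ a direct summand of $\pH^0(\tilde\cE)$. Since $\mathsf{R}^G_L$ is t-exact, $\mathsf{R}^G_L(\cF)$ is a direct summand of $\pH^0(\mathsf{R}^G_L(\tilde\cE))$. The essential geometric input is that $\mathsf{R}^G_L$ preserves the property of having parity underlying complex---a standard feature of hyperbolic localization, which follows from the Mirkovi\'c--Vilonen dimension formula for $\Gr^\lambda \cap S_c$ and the usual parity-preservation arguments for proper pushforward and pullback along locally closed embeddings that together define the constituent functors $(\sigma_c)_!(s_c)^*$ with the indicated shift. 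Granting this, $\For_{L_\scO}(\mathsf{R}^G_L(\tilde\cE))$ is parity, so Theorem~\ref{thm:parity-tilting} applied to $L$ (which inherits the assumption on $\varsigma$, since the simple roots of $L$ form a subset of those of $G$) gives that $\pH^0(\mathsf{R}^G_L(\tilde\cE))$ is tilting in $\Perv_{L_\scO}(\Gr_L,\bk)$; hence so is its summand $\mathsf{R}^G_L(\cF)$. I expect the main technical obstacle to be this parity-preservation of $\mathsf{R}^G_L$: while routine for hyperbolic localization in finite-dimensional and more benign stratified settings, its careful verification in the affine Grassmannian context will require some work. By comparison, the t-exactness of convolution by a perverse sheaf that drives part~\eqref{it:conv-tilting-1} is a straightforward consequence of Lemma~\ref{lem:convolution-exact}.
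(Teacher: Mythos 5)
Your argument is correct and is essentially the paper's own proof: both parts reduce, via Theorem~\ref{thm:parity-tilting} and Lemma~\ref{lem:parity-equiv-GO}, to working with the $\pH^0$ of a parity complex lifted to $\Db_{\GO}(\Gr,\bk)$, and then use the t-exactness of convolution with equivariant perverse sheaves (Lemma~\ref{lem:convolution-exact}), Lemma~\ref{lem:convolution-parity}, Proposition~\ref{prop:parity-IW-Gr}, the t-exactness of $\mathsf{R}^G_L$, and the equivalence $\Phi^0$ exactly as the paper does. The one step you single out as the ``main technical obstacle''---that $\mathsf{R}^G_L$ sends parity complexes to parity complexes---is not proved in the paper either: it is quoted directly from \cite[Theorem~1.6]{jmw2}, so no further verification is required (and your heuristic would in any case need care, since $\sigma_c$ is not proper; the proof in loc.\ cit.\ does indeed rest on the Mirkovi\'c--Vilonen dimension estimates, as you anticipate).
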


\begin{proof}
\eqref{it:conv-tilting-1}
In view of Theorem~\ref{thm:parity-tilting}, it suffices to show that if $\cF,\cG$ are parity objects in $\Db_{(G_\scO)}(\Gr, \bk)$, then $\pH^0(\cF) \star^{G_\scO} \pH^0(\cG)$ is a tilting perverse sheaf. 
In view of Lemma~\ref{lem:parity-equiv-GO}, it suffices to consider the case when $\cG=\mathrm{For}_{G_\scO}(\cG')$ for some $\cG'$ in $\Db_{G_\scO}(\Gr, \bk)$.
Then by exactness of convolution with $\GO$-equivariant perverse sheaves (see Lemma~\ref{lem:convolution-exact}) we have
\[
\pH^0(\cF) \star^{G_\scO} \pH^0(\cG) = \pH^0(\pH^0(\cF) \star^{G_\scO} \cG').
\]
Hence, using the t-exact functor $\Phi$ of~\S\ref{ss:equivalence}, we see that to conclude it suffices to prove that
\[
\Phi(\pH^0(\pH^0(\cF) \star^{G_\scO} \cG')) \cong \pH^0(\Phi(\pH^0(\cF) \star^{G_\scO} \cG')) \cong \pH^0(\Phi(\pH^0(\cF)) \star^{G_\scO} \cG')
\]
(where the second identification uses the canonical isomorphism
$\Phi(\mathcal{M} \star^{G_\scO} \mathcal{N}) \cong \Phi(\mathcal{M}) \star^{G_\scO} \mathcal{N}$
for $\mathcal{M}, \mathcal{N}$ in $\Db_{G_\scO}(\Gr, \bk)$)
is a tilting object in $\Perv_{\IW}(\Gr, \bk)$. However $\Phi(\pH^0(\cF))$ is a tilting perverse sheaf by Theorem~\ref{thm:parity-tilting}, hence it is also parity by Proposition~\ref{prop:parity-IW-Gr}. By Lemma~\ref{lem:convolution-parity}, it follows that $\Phi(\pH^0(\cF)) \star^{G_\scO} \cG'$ is parity, hence that its perverse cohomology objects are tilting (see again Proposition~\ref{prop:parity-IW-Gr}), which finishes the proof.

\eqref{it:conv-tilting-2}
As in~\eqref{it:conv-tilting-1}, it suffices to prove that if $\cF$ is a parity object in $\Db_{(G_\scO)}(\Gr, \bk)$, then $\mathsf{R}^G_L(\pH^0(\cF))$ is a tilting perverse sheaf. However, since $\mathsf{R}^G_L$ is t-exact we have
\[
\mathsf{R}^G_L(\pH^0(\cF)) \cong \pH^0(\mathsf{R}^G_L(\cF)).
\]
By~\cite[Theorem~1.6]{jmw2}, $\mathsf{R}^G_L(\cF)$ is a parity complex. Then the claim follows from Theorem~\ref{thm:parity-tilting}.
\end{proof}

%
%

\begin{rmk}
\label{rmk-tilting-ring}
 For simplicity, we have stated Theorem~\ref{thm:parity-tilting} only in the case $\bk$ is a field. But the Satake equivalence also holds when $\bk$ is the ring of integers in a finite extension of $\Ql$, and the notion of tilting objects also makes sense for split reductive group schemes over such rings, see~\cite[\S\S B.9--B.10]{jantzen}. Therefore we can consider the tilting objects in $\Perv_{\GO}(\Gr,\bk)$. On the other hand, the notion of parity objects also makes sense in $\Db_{(\GO)}(\Gr,\bk)$, and their classification is similar in this setting; see~\cite{jmw}. We claim that Theorem~\ref{thm:parity-tilting} also holds for this choice of coefficients.
 
In fact, if $\bk_0$ is the residue field of $\bk$, then it follows from~\cite[Lemma~B.9 \& Lemma~B.10]{jantzen} and the compatibility of the Satake equivalence with extension of scalars that an object $\cF$ in $\Perv_{\GO}(\Gr,\bk)$ is tilting if and only if $\bk_0 \lotimes_\bk \cF$ belongs to $\Perv_{\GO}(\Gr,\bk_0)$ and is tilting therein.
Now if $\cE$ is a parity object in $\Db_{(\GO)}(\Gr,\bk)$, then we have
 \begin{equation}
 \label{eqn:pH0-parity}
  \bk_0 \lotimes_\bk \pH^0(\cE) \cong \pH^0(\bk_0 \lotimes_\bk \cE).
 \end{equation}
 Indeed, assume that $\cE$ is even, and supported on a connected component of $\Gr$ containing $\GO$-orbits of even dimension. (The other cases are similar.) By~\cite[Theorem~1.6 and its proof]{jmw2}, the complex
 \[
  \bk_0 \lotimes_\bk \mathsf{R}^G_T(\cE) \cong \mathsf{R}^G_T(\bk_0 \lotimes_\bk \cE)
 \]
 is an even complex on the affine Grassmannian $\Gr_T$; therefore so is the complex $\mathsf{R}^G_T(\cE)$ by~\cite[Proposition~2.37]{jmw}. In view of~\cite[Lemma~1.10.7]{br}, this shows that $\pH^n(\cE)=0$ and $\pH^n(\bk_0 \lotimes_\bk \cE)=0$ unless $n$ is even. Then~\eqref{eqn:pH0-parity} is an easy consequence of this observation.
 
 From~\eqref{eqn:pH0-parity} and the comments above we obtain the desired extension of Theorem~\ref{thm:parity-tilting}.
\end{rmk}

\subsection{Interpretation in terms of Donkin's tensor product theorem}

In this subsection we assume that $\ell=\mathrm{char}(\bk)$ is good for $G$. 
Recall the triangulated category $\Db_{\IW}(\Fl,\bk)$ introduced in~\S\ref{ss:JMW-conj}.
The $I^+_\uu$-orbits in $\Fl$ are parametrized in a natural way by $W$, and those which support an $(I^+_\uu, \chi_{I^+}^*(\cL^\bk_\psi))$-equivariant local system are the ones corresponding to the elements $w \in W$ which are of minimal length in $\Wf w$. In this case, we denote by $\cE^\IW_w$ the corresponding indecomposable parity object.

As observed in~\S\ref{ss:JMW-conj} (see in particular Remark~\ref{rmk:parities}), under our present assumption, for any $\lambda \in \bX_+^\vee$ the object $\Phi(\cE_\lambda)$ is indecomposable and parity.
Therefore its pullback to $\Fl$ is also parity (by Lemma~\ref{lem:pi-parity}) and indecomposable (by~\cite[Lemma~A.5]{acr}).
We deduce that
\begin{equation}
\label{eqn:Phi-parity}
\pi^* \Phi(\cE_\lambda)[\dim \mathscr{B}] \cong \cE_{t_{\lambda + \varsigma}}^\IW.
\end{equation}
Using the functor $\mathscr{Z}$ considered in~\S\ref{ss:geometric-Weyl}, this formula can also be interpreted as follows.

\begin{prop}
\label{prop:isom-Donkin}
For any $\lambda \in \bX^\vee_+$, we have
\[
\cE^\IW_{t_{\varsigma}} \star^{I^-} \mathscr{Z}(\cE_\lambda) \cong \cE^\IW_{t_{\lambda + \varsigma}}.
\]
\end{prop}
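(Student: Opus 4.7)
The plan is to rewrite both sides of the claimed isomorphism in the form $\pi^*(\cdots)[\dim\mathscr{B}]$ using equation \eqref{eqn:Phi-parity}. For the right-hand side this is directly the content of \eqref{eqn:Phi-parity} at $\lambda$. To handle the left-hand side, I would first apply \eqref{eqn:Phi-parity} at $\lambda=0$: since $\cE_0$ is supported on the single point $\{L_0\}$ and coincides with the unit $\underline{\bk}_{\{L_0\}}$ of the monoidal structure on $\Db_{\GO}(\Gr,\bk)$, one has $\Phi(\cE_0) = \Delta^\IW_\varsigma(\bk) \star^\GO \cE_0 \cong \Delta^\IW_\varsigma(\bk)$, so that
\[
 \cE^\IW_{t_\varsigma} \cong \pi^*\bigl(\Delta^\IW_\varsigma(\bk)\bigr)[\dim\mathscr{B}].
\]

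Substituting this identification and then applying successively Lemma~\ref{lem:convolution-induction} (with $\cF = \Delta^\IW_\varsigma(\bk)$ and $\cG = \mathscr{Z}(\cE_\lambda)$) and Lemma~\ref{lem:induction-Z}, one obtains
\[
 \cE^\IW_{t_\varsigma} \star^{I^-} \mathscr{Z}(\cE_\lambda) \cong \bigl(\Delta^\IW_\varsigma(\bk) \star^\GO \rInd_{I^-}^{\GO}\mathscr{Z}(\cE_\lambda)\bigr)[\dim\mathscr{B}] \cong \bigl(\Delta^\IW_\varsigma(\bk) \star^\GO \pi^*(\cE_\lambda)\bigr)[\dim\mathscr{B}].
\]
I would then commute $\pi^*$ past the convolution to rewrite this as $\pi^*\bigl(\Delta^\IW_\varsigma(\bk) \star^\GO \cE_\lambda\bigr)[\dim\mathscr{B}] = \pi^*\Phi(\cE_\lambda)[\dim\mathscr{B}]$, and conclude by a second application of \eqref{eqn:Phi-parity} at $\lambda$.

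The only step not packaged as an earlier lemma is the compatibility
\[
 \Delta^\IW_\varsigma(\bk) \star^\GO \pi^*(\cE_\lambda) \cong \pi^*\bigl(\Delta^\IW_\varsigma(\bk) \star^\GO \cE_\lambda\bigr),
\]
but this is a routine consequence of proper base change applied to the Cartesian square whose horizontal arrows are the $\GK$-action maps $m^\Fl$, $m^\Gr$ and whose vertical arrows are induced by $\pi$. The same identification is used without comment in the proof of Proposition~\ref{prop:Lusztig-formula-2}, so it should not present any real obstacle; in fact the entire argument is a rearrangement of the pattern established there, with $\Pi_\varsigma$ replaced by $\Delta^\IW_\varsigma(\bk)$ and $\cJ_!(\lambda,\bk)$ replaced by $\cE_\lambda$.
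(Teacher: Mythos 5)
Your proposal is correct and follows essentially the same route as the paper, whose proof is precisely the observation that the claim follows from \eqref{eqn:Phi-parity} together with Lemma~\ref{lem:convolution-induction} and Lemma~\ref{lem:induction-Z}, rearranged as in the proof of Proposition~\ref{prop:Lusztig-formula-2}. Your extra steps (identifying $\cE^\IW_{t_\varsigma} \cong \pi^*\Delta^\IW_\varsigma(\bk)[\dim\mathscr{B}]$ via the case $\lambda=0$, and commuting $\pi^*$ past $\star^{\GO}$ by base change) are exactly the implicit ingredients of that argument.
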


\begin{proof}
 As in the proof of Proposition~\ref{prop:Lusztig-formula-2}, the claim follows from~\eqref{eqn:Phi-parity} using Lem\-ma~\ref{lem:convolution-induction} and Lemma~\ref{lem:induction-Z}.
\end{proof}

Let $\overline{\bk}$ be an algebraic closure of $\bk$, and assume that $\ell$ is strictly bigger than the Coxeter number of $G$. Then
the formula of Proposition~\ref{prop:isom-Donkin} is related to Donkin's tensor product theorem for tilting modules of the Langlands dual $\overline{\bk}$-group $G^\vee_{\overline{\bk}}$ as follows. In~\cite{tilting, prinblock, mkdkm} the authors construct a ``degrading functor''
\[
\eta : \mathsf{Parity}_{\IW}(\Fl^\circ, \overline{\bk}) \to \Tilt_{\mathrm{prin}}(G_{\overline{\bk}}^\vee),
\]
where $\Fl^\circ$ is the connected component of the base point in $\Fl$, $\mathsf{Parity}_{\IW}(\Fl^\circ, \overline{\bk})$ is the category of $(I^+_\uu, \chi_{I^+}^*(\cL^\bk_\psi))$-equivariant parity complexes on $\Fl^\circ$, and $\Tilt_{\mathrm{prin}}(G^\vee_{\overline{\bk}})$ denotes the category of tilting objects in the (non-extended) principal block of the category of finite-dimensional $G^\vee_{\overline{\bk}}$-modules. We expect that Donkin's tensor product theorem (see~\cite[\S E.9]{jantzen}) can be explained geometrically by an isomorphism of complexes involving the functor $\mathscr{Z}$ (see also~\cite[\S 9.3]{surveysmf} for more details). In fact, from this point of view Proposition~\ref{prop:isom-Donkin} is the geometric statement that underlies the isomorphism
\begin{equation}
\label{eqn:isom-Donkin}
 \mathsf{T}(\ell \varsigma) \otimes \mathsf{T}(\lambda)^{(1)} \cong \mathsf{T}(\ell \varsigma + \ell \lambda),
\end{equation}
where $\mathsf{T}(\nu)$ is the indecomposable tilting $G_{\overline{\bk}}^\vee$-module of highest weight $\nu$.

\begin{rmk}
 In general, Donkin's tensor product formula is known at present only when the characteristic of $\overline{\bk}$ is at least $2h-2$, where $h$ is the Coxeter number. However, this restriction is not necessary for the special case~\eqref{eqn:isom-Donkin}. Indeed, as explained in~\cite[Lemma~E.9]{jantzen}, the crucial ingredient to prove~\eqref{eqn:isom-Donkin} is the statement that $\mathsf{T}(\ell \varsigma)$ is indecomposable as a module for the Frobenius kernel $(G^\vee_{\overline{\bk}})_1$ of $G^\vee_{\overline{\bk}}$. However, by~\cite[Proposition~E.11]{jantzen} we have $\mathsf{T}(\ell \varsigma) \cong T_{(\ell-1) \varsigma}^{\ell \varsigma}(\mathsf{T}((\ell-1) \varsigma))$. Now $\mathsf{T}((\ell-1) \varsigma)$ is the Steinberg module $\mathsf{L}((\ell-1)\varsigma)$, and~\cite[\S 11.10]{jantzen} implies that its image under $T_{(\ell-1) \varsigma}^{\ell \varsigma}$
 is indeed indecomposable as a $(G^\vee_{\overline{\bk}})_1$-module.
\end{rmk}




\end{document}